\providecommand{\U}[1]{\protect\rule{.1in}{.1in}}
\def\theenumi{\arabic{enumi}}
\def\theenumii{\alph{enumii}}
\def\p@enumii{\theenumi.}
\def\theenumiii{\arabic{enumiii}}
\def\p@enumiii{(\theenumi)(\theenumii)}
\def\p@enumiv{\p@enumiii.\theenumiii}
\theoremstyle{plain}
\newtheorem{theorem}{Theorem}[section]
\newtheorem{lemma}[theorem]{Lemma}
\newtheorem{proposition}[theorem]{Proposition}
\newtheorem{corollary}[theorem]{Corollary}
\numberwithin{equation}{section}
\theoremstyle{definition}
\newtheorem{definition}[theorem]{Definition}
\newtheorem{example}[theorem]{Example}
\newtheorem{remark}[theorem]{Remark}
\newtheorem{thmab}{Theorem}
\newtheorem{corab}[thmab]{Corollary}
\renewenvironment{proof}[1][\proofname]{{\bfseries #1\\}}{\qed}
\DeclareMathOperator{\FI}{FI}
\DeclareMathOperator{\VI}{VI}
\DeclareMathOperator{\Hom}{Hom}
\DeclareMathOperator{\Sym}{Sym}
\DeclareMathOperator{\Conf}{Conf}
\DeclareMathOperator{\DConf}{DConf}
\newcommand{\Sn}{\mathfrak{S}}
\newcommand{\as}{\text{*}}
\newcommand{\C}{{\mathcal{C}}}
\newcommand{\Ob}{\mathcal{O}}
\newcommand{\Z}{{\mathbb{Z}}}
\newcommand{\N}{\mathbb{N}}
\newcommand{\R}{\mathbb{R}}
\newcommand{\Q}{\mathbb{Q}}
\newcommand{\F}{\mathbb{F}}
\newcommand{\K}{\mathcal{K}}
\newcommand{\Par}{\mathcal{P}}
\newcommand{\fiHom}{\mathscr{H}om}
\newcommand{\dt}{\bullet}
\newcommand{\arXiv}[1]{\href{http://arxiv.org/abs/#1}{\nolinkurl{arXiv:#1}}}
\newcommand{\arXivV}[2]{\href{http://arxiv.org/abs/#1}{\nolinkurl{arXiv:#1v#2}}}
\title{Families of nested graphs with compatible symmetric-group actions}
\author{Eric Ramos and Graham White}
\address{University of Michigan Department of Mathematics, 530 Church St., Ann Arbor, MI 48109}
\email{egramos@umich.edu}
\address{Indiana University - Bloomington Department of Mathematics, Rawles Hall, Bloomington, IN 47405}
\email{grrwhite@iu.edu}
\thanks{The first author was supported by NSF grant DMS-1704811.}
\begin{document}

\begin{abstract}
For fixed positive integers $n$ and $k$, the Kneser graph $KG_{n,k}$ has vertices labeled by $k$-element subsets of $\{1,2,\dots,n\}$ and edges between disjoint sets. Keeping $k$ fixed and allowing $n$ to grow, one obtains a family of nested graphs, each of which is acted on by a symmetric group in a way which is compatible with all of the other actions. In this paper, we provide a framework for studying families of this kind using the $\FI$-module theory of Church, Ellenberg, and Farb \cite{CEF}, and show that this theory has a variety of asymptotic consequences for such families of graphs. These consequences span a range of topics including enumeration, concerning counting occurrences of subgraphs, topology, concerning Hom-complexes and configuration spaces of the graphs, and algebra, concerning the changing behaviors in the graph spectra.
\end{abstract}

\keywords{FI-modules, Representation Stability, Graph Theory, Kneser Graphs}

\maketitle

\section{Introduction}

\subsection{Motivation}

Let $\FI$ denote the category whose objects are the finite sets $[n] := \{1,\ldots,n\}$, and whose morphisms are injections. In their seminal work, Church, Ellenberg, and Farb introduced the notion of an \emph{$\FI$-module} to formalize the connection between a large number of seemingly unrelated phenomena in topology and representation theory \cite{CEF}. Formally, an $\FI$-module is a functor from $\FI$ to the category of real vector spaces. Noting that the endomorphisms in $\FI$ are permutations, one may imagine an $\FI$-module as a series of representations of the symmetric groups $\Sn_n$, with $n$ increasing, which are compatible in some sense.\\

Recently, there has been a push in the literature to use the same philosophy underlying $\FI$-modules to study combinatorial objects. For instance, in his recent work \cite{Gad2} Gadish studies what he calls $\FI$-posets and $\FI$-arrangements. In this work, we will be mostly focused on \emph{$\FI$-graphs}, functors from $\FI$ to the category of graphs. For us, a \emph{graph} is a compact 1-dimensional simplicial complex. Given a graph $G$, we write $V(G)$ for the set of vertices of $G$ and $E(G)$ for the set of edges. Just as with the work of Gadish, we will discover that a relatively simple combinatorial condition on $\FI$-graphs will allow us to conclude a plethora of interesting structural properties of the graphs which comprise it.\\

Throughout this paper we will often denote $\FI$-graphs by $G_\dt$, and use $G_n$ as a short-hand for its evaluation on $[n]$. The \emph{transition maps} of $G_\dt$ are the graph morphisms induced by the morphisms of $\FI$. We say that an $\FI$-graph $G_\dt$ is \emph{vertex-stable of stable degree $\leq d$} if for all $n \geq d$, every vertex of $G_n$ appears in the image of some transition map. Some common examples of vertex-stable $\FI$-graphs include:\\
\begin{itemize}
\item the complete graphs $K_n$;
\item the Kneser graphs $KG_{n,r}$, for each fixed $r$. These are the graphs whose vertices are $r$-element subsets of $[n]$, and whose edges indicate disjointness;
\item the Johnson graphs $J_{n,r}$, for each fixed $r$. These are the graphs whose vertices are $r$-element subsets of $[n]$, and whose edges indicate that the intersection of the two subsets has size $r-1$.\\
\end{itemize}
Other examples of vertex-stable $\FI$-graphs are given at the end of Section \ref{def}. While it is straightforward to verify that the above examples are vertex-stable, one might also observe that they have a variety of other symmetries. The main structure theorem of vertex-stable $\FI$-graphs is that the condition of vertex stability automatically yields several other symmetries.\\
\newpage
\begin{thmab}\label{structuretheorem}
Let $G_\dt$ be a vertex-stable $\FI$-graph. Then for all $n \gg 0$:
\begin{enumerate}
\item the transition maps originating from $G_n$ are injective;
\item the transition maps originating from $G_n$ have induced images (see Definition \ref{induced});
\item every edge of $G_{n+1}$ is the image of some edge of $G_n$ under some transition map;
\item for any fixed $r \geq 1$ and any collection of vertices $\{v_1,\ldots,v_r\}$ of $G_{n+1}$, there exists a collection of $r$ vertices of $G_n$, $\{w_1,\ldots,w_r\}$ which map to $\{v_1,\ldots,v_r\}$ under some transition map.\\
\end{enumerate}
\end{thmab}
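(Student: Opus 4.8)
The plan is to reduce everything to a finiteness statement about the orbits of the symmetric group actions on the vertex sets of the $G_n$, and then exploit vertex stability to propagate information up and down the family. First I would set up the bookkeeping: for each $n$, let $X_n = V(G_n)$, equipped with its $\Sn_n$-action coming from the $\FI$-structure, and let $\phi_n \colon X_n \to X_{n+1}$ denote the transition map attached to the standard inclusion $[n] \hookrightarrow [n+1]$. Vertex stability of stable degree $\leq d$ says exactly that for $n \geq d$ every vertex of $X_{n+1}$ lies in $\Sn_{n+1}\cdot \phi_n(X_n)$, i.e.\ the $\Sn_{n+1}$-orbits of the image of $\phi_n$ cover $X_{n+1}$. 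The first thing I would extract is that the number of $\Sn_n$-orbits on $X_n$ is eventually constant (or at least bounded), because a covering system of this kind forces the orbit set to stabilize; combined with the fact that each vertex has a well-defined ``type'' — the stabilizer conjugacy class, or more usefully the minimal subset of $[n]$ it is ``supported on'' — this gives a uniform description of $X_n$ for large $n$.

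For part (1), injectivity of transition maps originating from $G_n$: the key point is that if $\phi_n$ fails to be injective, then two distinct vertices $v,w \in X_n$ become identified in $X_{n+1}$, hence (by equivariance) lie in the same $\Sn_{n+1}$-orbit after one step even though they may be in different $\Sn_n$-orbits; iterating, the orbit count would have to strictly drop infinitely often, contradicting the stabilization just established. More carefully, I would argue that non-injectivity of $\phi_n$ forces non-injectivity to persist and to ``consume'' an orbit at each stage, which is impossible once the orbit count is constant. One then upgrades injectivity of $\phi_n$ to injectivity of \emph{all} transition maps originating from $G_n$ (they differ from $\phi_n$ by postcomposition with a permutation, which is a bijection, plus the observation that any injection $[n]\hookrightarrow[m]$ factors through a permutation of $[m]$ restricted to a standard inclusion).

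For parts (2)--(4), I would work inside the stable range and use the uniform type description. Part (3) — every edge of $G_{n+1}$ is the image of an edge of $G_n$ — follows by the same mechanism applied to edges rather than vertices: the transition maps are simplicial, vertex stability handles the endpoints, and an edge of $G_{n+1}$ whose endpoints are $\phi_n(a),\phi_n(b)$ for $a,b \in V(G_n)$ must itself be $\phi_n$ of the edge $\{a,b\}$ provided $\{a,b\}$ is already an edge in $G_n$; the content is showing that adjacency is ``detected stably,'' which again comes down to the finitely many types and the fact that adjacency among vertices of bounded type is controlled by a bounded amount of data. Part (4), the simultaneous lifting of any fixed number $r$ of vertices, is the natural ``higher'' analogue: a single vertex of $G_{n+1}$ is supported on boundedly many elements of $[n+1]$, so $r$ of them are supported on at most $C \cdot r$ elements (a constant depending only on $d$), and for $n$ large enough that $n \geq C\cdot r$ one can find a single injection $[n+1-1]\hookrightarrow \cdots$ — more precisely an element of $\Sn_{n+1}$ moving this bounded support into $[n]$ — whose corresponding transition map lifts all $r$ vertices at once. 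Part (2), that the images are ``induced'' in the sense of Definition~\ref{induced}, I would prove last: an edge of $G_{n+1}$ between two vertices both in the image $\phi_n(V(G_n))$ must already be the image of an edge of $G_n$, which is essentially the combination of (1), (3), and the type analysis — the image subgraph is full on its vertex set because adjacency is preserved in both directions once everything is pushed into a common stable model.

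The main obstacle I expect is making the ``type'' or ``support'' formalism precise and robust: for an abstract $\FI$-graph one is not literally handed subsets of $[n]$ labeling the vertices, so I would need to construct, for $n$ in the stable range, a canonical (or at least uniformly bounded) notion of the support of a vertex — likely as the smallest $S \subseteq [n]$ such that the vertex lies in the image of the transition map from $[S]$ — and prove it is well-defined, has size bounded in terms of $d$ alone, and is compatible with the transition maps. Once that device is in hand, all four assertions become applications of the pigeonhole principle comparing supports against the ambient size $n$. The secondary obstacle is the quantifier in (4): one must get a \emph{single} transition map lifting all $r$ vertices, not $r$ separate maps, which is why the bound ``$n \gg 0$'' must be allowed to depend on $r$, and this dependence should be tracked to confirm it is only linear in $r$.
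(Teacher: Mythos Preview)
Your approach is more combinatorial than the paper's, which instead translates everything into $\FI$-module language: it shows $\R\binom{V(G_\dt)}{r}$ and $\R E(G_\dt)$ are finitely generated as subquotients of tensor powers of $\R V(G_\dt)$ (giving (3) and (4) via Noetherianity), reads (1) directly off the Church--Ellenberg--Farb theorem that transition maps of a finitely generated $\FI$-module are eventually injective, and proves (2) by counting $\Sn_n$-orbits on pairs of vertices. Your argument for (4) is essentially correct and matches the direct proof the paper sketches in a remark.

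However, your arguments for (1) and (3) have genuine gaps. For (1): if $\phi_n(v)=\phi_n(w)$ with $v\neq w$, there is no reason $v$ and $w$ lie in different $\Sn_n$-orbits, so no orbit is ``consumed'' and your termination-by-orbit-count argument does not go through. What actually changes under such a collision is the stabilizer of the image point, and converting growth of stabilizers into a termination statement is precisely the support formalism you flag as the main obstacle but do not carry out. For (3): lifting the two endpoints of an edge of $G_{n+1}$ to $G_n$ via (4) with $r=2$ does not lift the edge, since graph homomorphisms may create edges; your appeal to ``adjacency being detected stably'' is the entire content of (2) and (3), so invoking it here is circular. The combinatorial fix---and this is exactly how the paper handles (2)---is to run the orbit-count argument on \emph{pairs} of vertices rather than single vertices: once the number of pair-orbits stabilizes and the induced map on pair-orbits is a bijection, the fact that edge-orbits map to edge-orbits forces, by counting, non-edge-orbits to map to non-edge-orbits, which gives (2) and (3) simultaneously.
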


One should note two recurring themes in the above theorem. Firstly, many of the results in this work (indeed, many of the results in the theory of $\FI$-modules) are only true asymptotically. Secondly, while one can prove the existence of certain behaviors in general, it is usually quite difficult to make such existential statements effective (see Theorem \ref{inducedtheorem} for an instance where this is not the case). This is a consequence of the methods used to prove such statements. In this work, the main proof techniques which will be employed fall under what one might call a Noetherian method. Namely, we rephrase what needs to be proven in terms of finite generation of some associated module. We then prove that this module is a submodule of something which is easily seen to be finitely generated, and apply standard Noetherianity arguments to conclude that the original module was finitely generated. It is an interesting question to ask which, if any, of our results can be made effective through more combinatorial means.\\

Following the proof of Theorem \ref{structuretheorem}, we spend the majority of the body of the paper illustrating various applications. These applications come in three flavors: enumerative, topological, and algebraic.\\

\subsection{Enumerative applications}

We begin by asking the following question: Given a vertex-stable $\FI$-graph $G_\dt$, is it possible to count the occurrences of some fixed substructure in $G_n$, as a function of $n$?\\

Given graphs $G$ and $H$, we say that $H$ is a \emph{subgraph} of $G$ if there exists an injective morphism of graphs $H \hookrightarrow G$. One instance of the above question is whether we can count the number of subgraphs of $G_n$ which are isomorphic to $H$, as a function of $n$. We answer this question in the affirmative.\\

\begin{thmab}\label{enumthmab}
Let $G_\dt$ be a vertex-stable $\FI$-graph of stable degree $\leq d$, and let $H$ be a graph. Then there exists a polynomial $p_H(x) \in \Q[x]$ of degree $\leq |V(H)|\cdot d$ such that for all $n \gg 0$ the function
\[
n \mapsto \text{ the number of subgraphs of $G_n$ isomorphic to $H$ }
\]
agrees with $p_H(n)$.\\
\end{thmab}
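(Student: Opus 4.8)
The plan is to reduce the counting of subgraphs isomorphic to $H$ to a statement about finite generation of an associated $\FI$-module, and then invoke the polynomial-growth consequences of Noetherianity. Concretely, fix $H$ with vertex set of size $m = |V(H)|$. For each $n$, let $W_n$ be the free real vector space on the set of injective graph morphisms $H \hookrightarrow G_n$; equivalently, on the set of subsets $S \subseteq V(G_n)$ equipped with a graph embedding of $H$ onto the induced substructure they span. The symmetric group $\Sn_n$ acts on $W_n$ via its action on $G_n$, and more generally every $\FI$-morphism $[n] \to [n']$ induces (via the transition map $G_n \to G_{n'}$, which is injective and has induced image for $n \gg 0$ by Theorem \ref{structuretheorem}(1),(2)) a map of graphs that carries an embedding of $H$ to an embedding of $H$. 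This makes $W_\dt$ into an $\FI$-module, at least after truncating to $n \gg 0$ so that the structure maps are genuine embeddings with induced image.

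The key step is to show that $W_\dt$ is a finitely generated $\FI$-module. Here I would use the Noetherian method advertised in the introduction: exhibit $W_\dt$ as a subquotient (or submodule) of an evidently finitely generated $\FI$-module. The natural candidate is the $\FI$-module $M_\dt$ whose value on $[n]$ is the free vector space on the set of injections $[m] \hookrightarrow V(G_n)$, i.e.\ on ordered $m$-tuples of distinct vertices of $G_n$. Since $G_\dt$ is vertex-stable of stable degree $\leq d$, Theorem \ref{structuretheorem}(4) says that for $n \gg 0$ every $m$-tuple of vertices of $G_{n+1}$ is hit by an $m$-tuple of vertices of $G_n$; combined with vertex-stability this shows $M_\dt$ is generated in degrees $\leq md$, hence is a finitely generated $\FI$-module (it is a subquotient of a finite sum of the standard projectives $M(\FI)$ of the form $\mk[\Hom_{\FI}([j],-)]$ with $j \leq md$, which are Noetherian by the Church--Ellenberg--Farb Noetherianity theorem). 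Now $W_n$ is a quotient of a sub-$\FI$-module of $M_\dt^{\oplus |\Aut \text{-data}|}$: an embedding $H \hookrightarrow G_n$ is precisely an injection $[m] \hookrightarrow V(G_n)$ (a labeling of the vertices of $H$) whose image spans a substructure in which the required edges of $H$ are present — a condition on the image tuple that is preserved by the transition maps because they have induced image — modulo the action of $\Aut(H)$ relabeling the $[m]$. Closure under transition maps is exactly where parts (1),(2),(3) of Theorem \ref{structuretheorem} are used: injectivity guarantees the image is again an $m$-tuple of distinct vertices, and the induced-image property guarantees that the edges of $H$ present in $G_n$ remain present and no spurious identifications occur, so the "is an embedding of $H$" condition is $\FI$-stable. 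Thus $W_\dt$ is finitely generated.

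Once $W_\dt$ is finitely generated, I would appeal to the standard structure theory: a finitely generated $\FI$-module $V_\dt$ over a field of characteristic zero has $\dim_{\mk} V_n$ eventually equal to a polynomial in $n$, and moreover if $V_\dt$ is generated in degrees $\leq D$ then that polynomial has degree $\leq D$. Applying this to $W_\dt$, which we have exhibited as a quotient of an $\FI$-module generated in degrees $\leq md = |V(H)|\cdot d$, we conclude that $\dim_{\mk} W_n$ agrees with a polynomial $q_H(n) \in \Q[x]$ of degree $\leq |V(H)|\cdot d$ for $n \gg 0$. Finally, $\dim_{\mk} W_n$ counts injective morphisms $H \hookrightarrow G_n$, which overcounts subgraphs isomorphic to $H$ by exactly the factor $|\Aut(H)|$ (the number of automorphisms of $H$ as a graph), a constant independent of $n$; so $p_H(x) := q_H(x)/|\Aut(H)|$ is the desired polynomial, still of degree $\leq |V(H)|\cdot d$ and still rational-valued on integers, hence in $\Q[x]$.

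The main obstacle I anticipate is the bookkeeping in the second paragraph: verifying carefully that "the image tuple spans an embedded copy of $H$" really is a condition closed under all transition maps, which requires the full strength of the induced-image property — one must rule out, for instance, that a transition map creates a new edge between two vertices of the image that would turn a copy of $H$ into a copy of some larger graph, or conversely that the simplicial-complex structure fails to transport edges faithfully. Getting the $\FI$-module structure on $W_\dt$ genuinely functorial (and not merely defined for $n$ in some cofinal range) may require passing to a shift or truncation of $W_\dt$, which is harmless for the asymptotic conclusion. The degree bound $|V(H)|\cdot d$ rather than something sharper is exactly what the generation-degree bound on $M_\dt$ yields, so no extra work is needed there beyond tracking constants.
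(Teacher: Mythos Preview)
Your proposal is correct and follows essentially the same route as the paper: build an $\FI$-module whose basis in degree $n$ is the set of graph injections $H \hookrightarrow G_n$ (zeroed out below the range where transition maps are injective), embed it into $(\R V(G_\dt))^{\otimes |V(H)|}$, and invoke Proposition~\ref{tensorgen} together with Theorem~\ref{repstab} to obtain the polynomial of the asserted degree; the division by $|\Aut(H)|$ is exactly the paper's observation that graph injections overcount subgraphs by a constant factor. One small remark: for the \emph{non}-induced count you do not actually need the induced-image property of the transition maps---injectivity alone ensures that a graph injection $H \hookrightarrow G_n$ pushes forward to a graph injection $H \hookrightarrow G_{n+1}$, since graph homomorphisms already preserve edges; the induced-image hypothesis is only needed for $\eta_H^{ind}$, where a newly created edge would destroy the ``induced'' condition.
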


\begin{remark}
For a fixed pair of graphs $G$ and $H$, the number of subgraphs of $G$ isomorphic to $H$ is not the number of graph injections from $H$ to $G$. Indeed, usually one is concerned with counting the number of such injections \emph{up to postcomposition with automorphisms of $H$}. Because $H$ is independent of $n$, the above theorem remains true regardless of how the counting problem is interpreted.\\
\end{remark}

To convince themselves of this theorem, one should consider the case of the complete graphs $K_n$. In this case, one can count the number of occurrences of $H$ by first choosing $|V(H)|$ vertices, and then counting the number of copies of $H$ in the induced $K_{|V(H)|}$ subgraph. We will see in Section \ref{def} that $\FI$-graphs are fairly diverse, and therefore one should not expect the general case to be quite this straightforward. However, the idea that one should begin by choosing $|V(H)|$ vertices of $G_n$ remains relevant. From this point one proceeds by applying the fourth part of Theorem \ref{structuretheorem}.\\

Another interesting enumerative consequence of vertex stability involves counting degrees of vertices. Recall that in a given graph $G$, the \emph{degree} of a vertex $v$ is the number of edges adjacent to $v$. We usually write $\Delta(G)$ for the maximum degree of a vertex in $G$, and $\delta(G)$ for the minimum degree.\\
\newpage
\begin{thmab}\label{degreethm}
Let $G_\dt$ be a vertex-stable $\FI$-graph of stable degree $\leq d$. Then the functions
\[
n \mapsto \Delta(G_n) \hspace{0.4cm} \text{and} \hspace{0.4cm} n \mapsto \delta(G_n)
\]
each agree with a polynomial of degree at most $d$ for all $n \gg 0$.\\
\end{thmab}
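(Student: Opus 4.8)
The plan is to write both $\Delta(G_n)$ and $\delta(G_n)$ in terms of the degrees of finitely many ``types'' of vertices, and then to realize the degree along each type as the dimension of a finitely generated $\FI$-module of weight at most $d$, so that the polynomial growth theorem of \cite{CEF} applies.

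First I would reduce to a single orbit of vertices. The endomorphisms of $[n]$ in $\FI$ are exactly the permutations in $\Sn_n$, and functors preserve isomorphisms, so $\Sn_n$ acts on $G_n$ by graph automorphisms; in particular $\deg_{G_n}(-)$ is constant on each $\Sn_n$-orbit of $V(G_n)$. Vertex-stability of stable degree $\leq d$ implies, essentially by definition (descend repeatedly along transition maps until the source has size $\leq d$; this is also visible from Theorem \ref{structuretheorem}), that $V(G_\dt)$ is a finitely generated $\FI$-set generated in degrees $\leq d$. Hence, by representation stability (\cite{CEF}), for $n \gg 0$ the $\Sn_n$-set $V(G_n)$ decomposes into a fixed number of orbits, each of which is the image under the standard inclusions of a single vertex $v$ first appearing in some $G_{m(v)}$ with $m(v) \leq d$. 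So it suffices to show that for such a $v$, writing $v^{(n)}$ for its image in $G_n$, the function $n \mapsto \deg_{G_n}(v^{(n)})$ agrees for $n \gg 0$ with a polynomial in $n$ of degree $\leq d$. Granting this, $\Delta(G_n)$ (resp. $\delta(G_n)$) is, for $n \gg 0$, the pointwise maximum (resp. minimum) of finitely many such polynomials, and a pointwise maximum or minimum of a finite collection of polynomials eventually agrees with one of them, hence is eventually polynomial of degree $\leq d$.

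Next I would build the relevant module. Fix $v$ and $m = m(v)$ as above and pass to the shifted $\FI$-graph $\Sigma^m G_\dt$, with $(\Sigma^m G)_k := G_{m+k}$, for which $v$ — now viewed over the empty set — is fixed by the entire symmetric group acting at each level; one checks that $\Sigma^m G_\dt$ is again vertex-stable and that $V(\Sigma^m G_\dt)$ remains a finitely generated $\FI$-set generated in degrees $\leq d$. Let $N_k$ be the set of neighbors of $v$ in $(\Sigma^m G)_k$. Since graph morphisms send edges to edges and fix $v$, the structure maps of $V(\Sigma^m G_\dt)$ restrict to $N_\dt$, exhibiting it as a sub-$\FI$-set of $V(\Sigma^m G_\dt)$; and since each $G_{m+k}$ is a simplicial complex, $|N_k| = \deg_{G_{m+k}}(v^{(m+k)})$. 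Now $\R N_\dt$ is a submodule of $\R V(\Sigma^m G_\dt)$, which is finitely generated in degrees $\leq d$, hence has weight $\leq d$ in the sense of \cite{CEF}; a sub-$\FI$-module of a module of weight $\leq d$ again has weight $\leq d$ (in characteristic zero, where the symmetric-group representations are semisimple), and by Noetherianity of $\FI$-modules in characteristic zero (\cite{CEF}) the module $\R N_\dt$ is itself finitely generated. By the polynomiality theorem of \cite{CEF}, $\dim_\R (\R N_\dt)_k = |N_k|$ therefore agrees for $k \gg 0$ with a polynomial in $k$ of degree $\leq d$; substituting $k = n - m$, which does not change the degree, yields the statement needed in the previous paragraph, and the theorem follows.

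The hard part is not any individual estimate but the structural bookkeeping behind the two ``one checks'' claims: that vertex-stability propagates — both through the iterated descent along transition maps and through the shift $\Sigma^m$ — to the assertion that the vertex $\FI$-set is finitely generated \emph{in degrees $\leq d$}, since it is this degree bound, rather than mere finite generation, that is responsible for the sharp estimate $\leq d$ in the conclusion; and that the neighbors of a fixed vertex genuinely assemble into a sub-$\FI$-module whose dimension computes the vertex degree. Once these points are secured, the appeals to $\FI$-Noetherianity, to the polynomial growth theorem, and to the elementary fact about maxima and minima of finitely many polynomials are routine.
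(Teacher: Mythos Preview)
Your argument is correct, and it takes a genuinely different route from the paper's. Both proofs begin the same way: reduce to the finitely many $\Sn_n$-orbits of vertices (whose number stabilizes by representation stability) and show that the common degree $\mu(\Ob_v(n))$ along each orbit is eventually a polynomial of degree $\leq d$, so that the max/min over finitely many such polynomials is again eventually one of them. The divergence is in how this orbit-degree is handled.

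The paper (Corollary~\ref{degreegrow}) does \emph{not} realize $\mu(\Ob_v(n))$ directly as the dimension of an $\FI$-module. Instead it forms the neighborhood $\FI$-graph $N(\Ob_v(\dt))$, observes that $|E(N(\Ob_v(n)))|$, $|\Ob_v(n)|$, and $|E(\Ob_v(n))|$ are each eventually polynomial (as dimensions of submodules of $\R E(G_\dt)$ or $\R V(G_\dt)$), and uses the handshake-type identity
\[
|E(N(\Ob_v(n)))| \;=\; \mu(\Ob_v(n))\cdot |\Ob_v(n)| - |E(\Ob_v(n))|
\]
to solve for $\mu(\Ob_v(n))$ as a rational function of $n$; integrality then forces it to be a polynomial. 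The degree bound $\leq d$ is not explicit in that argument and has to be recovered separately (e.g.\ from the trivial inequality $\mu \leq |V(G_n)|$). Your approach, by contrast, passes to the shift $\Sigma^m G_\dt$ so that the chosen vertex becomes an $\Sn_k$-fixed point, and then exhibits its neighbor set $N_\dt$ as a sub-$\FI$-set of $V(\Sigma^m G_\dt)$; since $\R V(\Sigma^m G_\dt)$ is still $d$-small (shifts of $M(a)$ decompose into $M(b)$'s with $b\leq a$), the module $\R N_\dt$ is $d$-small and the degree bound $\leq d$ falls out immediately from Theorem~\ref{repstab}. This is cleaner for the sharp bound and sidesteps the rational-function trick, at the cost of invoking the shift functor and the (standard) fact that it preserves generation degree. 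The paper's approach is more self-contained within the graph-theoretic framework set up in the text, and indeed the authors remark in the introduction that they regard this theorem as not obviously expressible as a dimension count; your argument shows that it is, once one is willing to shift.
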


While Theorem \ref{degreethm} appears very similar to Theorem \ref{enumthmab}, there is one subtle difference. In the case of Theorem \ref{enumthmab}, one reduces to the case of $\FI$-modules by considering the family of symmetric group representations induced by the symmetric group action on copies of $H$ inside $G_n$. It is unclear, however, whether such an approach can work to prove Theorem \ref{degreethm}, as the maximum and minimum degrees of $G_n$ cannot in any obvious way be realized as the dimension of some symmetric group representation. The proof of Theorem \ref{degreethm} is therefore a bit more subtle, and can be considered more traditionally combinatorial than that of Theorem \ref{enumthmab}.\\

To conclude our enumerative applications, we consider the question of counting walks in $G_n$. Recall that for a fixed integer $r \geq 0$ and a graph $G$, a \emph{walk of length $r$ in $G$} is an $(r+1)$-tuple of vertices of $G$, $(v_0,\ldots,v_r)$, such that for all $0 \leq i \leq r-1$, $\{v_i,v_{i+1}\} \in E(G)$. We say that a walk $(v_0,\ldots,v_r)$ is \emph{closed} if $v_r = v_0$.\\

\begin{thmab}
Let $G_\dt$ be a vertex-stable $\FI$-graph of stable degree $\leq d$. Then the functions
\[
n \mapsto |\{\text{walks in $G_n$ of length $r$}\}| \hspace{0.4cm} \text{and} \hspace{0.4cm} n \mapsto |\{\text{closed walks in $G_n$ of length $r$}\}|
\]
each agree with a polynomial of degree at most $rd$ whenever $n \gg 0$.\\
\end{thmab}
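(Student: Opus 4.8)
The plan is to realize the counting function as the dimension of a finitely generated $\FI$-module, in direct parallel with the approach to Theorem \ref{enumthmab}. Fix $r \geq 0$. For each $n$ (large enough that the conclusions of Theorem \ref{structuretheorem} hold), let $W_r(G_n)$ be the free $\mk$-vector space on the set of length-$r$ walks $(v_0, \dots, v_r)$ in $G_n$, and let $W_r^{\mathrm{cl}}(G_n)$ be the free vector space on the closed walks. The first step is to check that $n \mapsto W_r(G_n)$ and $n \mapsto W_r^{\mathrm{cl}}(G_n)$ assemble into $\FI$-modules: given an $\FI$-morphism $[n] \hookrightarrow [m]$ and the induced transition map $f \colon G_n \to G_m$, a walk $(v_0, \dots, v_r)$ in $G_n$ is sent to $(f(v_0), \dots, f(v_r))$, which is again a walk since $f$ is a graph morphism, and a closed walk is sent to a closed walk. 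This gives the transition maps on basis elements, and functoriality is immediate. Once $n \gg 0$, these transition maps are injective on vertices by part (1) of Theorem \ref{structuretheorem}, hence injective on walks, so the $\FI$-module maps are injective in this range.

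The second step is finite generation. Here the key input is part (4) of Theorem \ref{structuretheorem}: for $n \gg 0$, any collection of at most $r+1$ vertices of $G_{n+1}$ is the image under some transition map of a collection of $r+1$ vertices of $G_n$. In particular, taking the vertices of a length-$r$ walk in $G_{n+1}$, there is a transition map $G_n \to G_{n+1}$ and a tuple of vertices in $G_n$ mapping onto the walk's vertex set; since the target tuple spans a walk and the transition map is (for $n$ large) an isomorphism onto an induced subgraph by parts (1)–(2), the preimage tuple is itself a walk, and it maps to the given walk. Thus every basis element of $W_r(G_{n+1})$ lies in the image of $W_r(G_n)$, so the $\FI$-module $W_r(G_\dt)$ is generated in degrees $\leq N$ for some $N$; the same argument applies to $W_r^{\mathrm{cl}}(G_\dt)$. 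Since $W_r(G_\dt)$ is a quotient of a finitely generated free $\FI$-module (or, alternatively, a submodule of the $\FI$-module freely spanned by all $(r+1)$-tuples of vertices, which is finitely generated and Noetherian by the machinery of \cite{CEF}), it is finitely generated.

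The third step is to invoke the polynomiality of dimensions of finitely generated $\FI$-modules: for a finitely generated $\FI$-module $M$ generated in degrees $\leq N$ and with relations in bounded degree, $\dim_{\mk} M_n$ agrees with a polynomial in $n$ for $n \gg 0$ (Church–Ellenberg–Farb; see also the treatment in the body of this paper of stable degree and weight). To pin down the degree bound $rd$, the cleanest route is to bound the weight: a length-$r$ walk is determined by its ordered $(r+1)$-tuple of vertices, and $n \mapsto (\text{tuples of } r+1 \text{ vertices of } G_n)$ spans an $\FI$-module which is a subquotient of the $(r+1)$-st tensor power of the $\FI$-module $\mk\langle V(G_\dt)\rangle$ spanned by vertices; the latter has weight (= stable degree) $\leq d$, so the tensor power has weight $\leq (r+1)d$, and $W_r(G_\dt)$, being a subquotient, has weight $\leq (r+1)d$ as well, giving polynomial degree $\leq (r+1)d$. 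This is not quite the claimed $rd$, so the final step — and the main obstacle — is to shave off one factor of $d$. The idea is that a walk is not an arbitrary $(r+1)$-tuple: consecutive vertices must be adjacent, so $v_0$ ranges over all of $V(G_n)$ but, once $v_0$ is fixed, each subsequent $v_{i+1}$ is constrained to the neighborhood of $v_i$. Concretely, one expects the $\FI$-module of walks starting at a fixed-orbit vertex to behave like the edge module, whose weight is $\leq d$ rather than $2d$ (an edge is two vertices, but the two endpoints of an edge of $G_n$ together involve only $d$ "new" points in the FI-sense once $n \gg 0$, by the induced-image structure); iterating along the walk, each step contributes weight $\leq d$ only through a bounded correction, and a careful induction on $r$ using the edge module's weight bound and part (3) of Theorem \ref{structuretheorem} should yield weight $\leq rd$ for $W_r(G_\dt)$ and likewise for $W_r^{\mathrm{cl}}(G_\dt)$. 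I expect this inductive weight computation to be the technical heart of the argument; everything else is a routine application of the structure theorem and standard $\FI$-module Noetherianity.
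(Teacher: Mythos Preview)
Your approach is exactly the paper's: define the $\FI$-module $W_r(G_\dt)$ spanned by length-$r$ walks (set to zero before the transition maps become injective), observe that closed walks form a submodule, embed $W_r(G_\dt)$ into a tensor power of $\R V(G_\dt)$, and invoke Noetherianity together with Theorem~\ref{repstab}. Your direct finite-generation argument via part~(4) of Theorem~\ref{structuretheorem} is correct but superfluous: the embedding into the tensor power already gives finite generation by the Noetherian property, and that is all the paper does.

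Where you diverge from the paper is in worrying about the precise degree bound. You are right that a walk of length $r$ is an $(r{+}1)$-tuple of vertices, so the natural embedding lands in $\R V(G_\dt)^{\otimes(r+1)}$ and yields $(r{+}1)d$-smallness. The paper's proof simply writes the embedding as
\[
(v_1,\ldots,v_r)\;\longmapsto\; v_1\otimes\cdots\otimes v_r \in \R V(G_\dt)^{\otimes r},
\]
which is an indexing slip relative to its own definition of a walk of length $r$ as $(v_0,\ldots,v_r)$; no further ``shaving'' argument is given. So the obstacle you identify is real, but it is not something the paper actually overcomes --- your proof already matches the paper's modulo this typo. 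Note, however, that for \emph{closed} walks the bound $rd$ is genuinely correct and immediate: since $v_r=v_0$, a closed walk of length $r$ is determined by the $r$-tuple $(v_0,\ldots,v_{r-1})$, and $W_r^c(G_\dt)$ embeds in $\R V(G_\dt)^{\otimes r}$. The heuristic you sketch for open walks (that each step along the walk should contribute only $d$ to the weight) does not obviously go through, and you should not expect it to without a sharper input than the edge-stable degree bound $d_E\leq 2d$.
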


\subsection{Topological applications}

In this paper we will be primarily concerned with two topological applications of the theory of vertex-stable $\FI$-graphs. Our major results will prove that certain natural topological spaces associated to vertex-stable $\FI$-graphs will be \emph{representation stable} in the sense of Church and Farb \cite{CF} (see Definition \ref{repstable}). The first of our applications is related to the so-called $\Hom$-complexes.\\

Let $T$ and $G$ be two graphs. A \textbf{multi-homomorphism} from $T$ to $G$ is a map of sets,
\[
\alpha:V(T) \rightarrow \Par(V(G))-\emptyset
\]
such that for all edges $\{x,y\} \in E(T)$, and all choices of $v \in \alpha(x)$ and $w \in \alpha(y)$, one has $\{v,w\} \in E(G)$. The \emph{$\Hom$-complex} of $T$ and $G$, denoted $\fiHom(T,G)$, the simplicial complex whose simplicies are multi-homomorphisms between $T$ and $G$ (See Definition \ref{homcomplexdef} for details). These complexes first rose to popularity through the work of Babson and Koslov \cite{BK,BK2}, which expanded upon famous work of Lov\'asz \cite{Lo}. For instance, it is shown in those works that the topological connectivity of the space $\fiHom(K_2,G)$ can be used to bound the \emph{chromatic number} of $G$.\\

\begin{thmab}
Let $G_\dt$ be a vertex-stable $\FI$-graph. Then for any graph $T$, the functor
\[
n \mapsto \fiHom(T,G_n)
\]
is representation stable (see Definition \ref{repstable}). In particular, if $i \geq 0$ is fixed, then the function
\[
n \mapsto \dim_\R(H_i(\fiHom(T,G_n);\R))
\]
eventually agrees with a polynomial of degree at most $|V(T)|\cdot d(i+1)$.\\
\end{thmab}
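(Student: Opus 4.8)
The plan is to exhibit, for each fixed homological degree $i$, an $\FI$-module structure on $n \mapsto H_i(\fiHom(T,G_n);\R)$ and then show this $\FI$-module is finitely generated in degrees $\leq |V(T)|\cdot d(i+1)$, from which representation stability and the polynomial dimension bound follow by the standard machinery of \cite{CEF}. First I would observe that $\fiHom(T,G_\dt)$ is a functor from $\FI$ to simplicial complexes: a transition map $G_n \to G_m$ induces a map of simplicial complexes $\fiHom(T,G_n)\to\fiHom(T,G_m)$ by postcomposing a multi-homomorphism $\alpha\colon V(T)\to\Par(V(G_n))-\emptyset$ with the induced-image map on vertices (one must check this sends non-empty subsets to non-empty subsets and preserves the edge condition, which is immediate once $n \gg 0$ so that transition maps are injective with induced images by Theorem \ref{structuretheorem}). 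Applying the simplicial chain functor with $\R$ coefficients and then $H_i$ produces the desired $\FI$-module.

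The heart of the argument is controlling the generation degree. I would build a chain-level model: let $C_j(\fiHom(T,G_\dt))$ be the $\FI$-module of simplicial $j$-chains, so that $j$-simplices are multi-homomorphisms whose total data is a choice, for each $x\in V(T)$, of a non-empty subset $\alpha(x)\subseteq V(G_n)$, with $\sum_x(|\alpha(x)|-1)=j$ (plus the edge-compatibility condition). A $j$-simplex thus involves at most $j+|V(T)|$ vertices of $G_n$ in total. By part (4) of Theorem \ref{structuretheorem}, any such collection of $\leq (j+|V(T)|)$ vertices of $G_{n+1}$ is the image of a collection of that many vertices of $G_n$; hence every $j$-simplex of $\fiHom(T,G_{n+1})$ is in the image of a transition map from $\fiHom(T,G_n)$ once $n \gg 0$. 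This shows $C_j(\fiHom(T,G_\dt))$ is a vertex-stable-type $\FI$-module generated in degree $\leq j+|V(T)|$; more precisely, since a $j$-simplex is encoded by a surjection from a set of size $j+|V(T)|$ onto the support together with the partition data, $C_j$ is a subquotient of a finite direct sum of representable $\FI$-modules $\R[\Hom_{\FI}([j+|V(T)|],-)]$, hence a finitely generated $\FI$-module by Noetherianity (the method flagged in the introduction). To reach the stated bound $|V(T)|\cdot d(i+1)$ I would be more careful about stable degree: a single vertex of $G_n$ is the image of $\leq d$ vertices of the "generating" stage, so the relevant count is not $j+|V(T)|$ raw vertices but rather the corresponding figure scaled by the stable degree $d$, and $H_i$ only sees chains up to degree $i+1$ (it is a subquotient of $C_i$, with relations coming from $C_{i+1}$), giving the factor $d(i+1)$ against the $|V(T)|$ vertices of $T$. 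Finite generation of $C_j$ for each $j$, together with Noetherianity of the category of $\FI$-modules, passes to the subquotient $H_i$.

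With $H_i(\fiHom(T,G_\dt);\R)$ established as a finitely generated $\FI$-module generated in degree $\leq |V(T)|\cdot d(i+1)$, representation stability in the sense of Definition \ref{repstable} is immediate from the equivalence between finitely generated $\FI$-modules and representation stable sequences (\cite{CEF}), and the eventual polynomiality of $n\mapsto\dim_\R H_i(\fiHom(T,G_n);\R)$, with degree bounded by the generation degree, is the standard dimension formula for finitely generated $\FI$-modules. The main obstacle I anticipate is not the finite-generation argument itself but the bookkeeping needed to get the constant right: tracking how the stable degree $d$ enters when one lifts a fixed-size collection of vertices of $G_{n+1}$ back to $G_n$ (part (4) gives existence of a lift of the same cardinality, but to get polynomial degree bounds one needs the generation statement phrased via representable $\FI$-modules, whose generation degree reflects the stable degree $d$), and being careful that the edge-compatibility condition cutting out $\fiHom$ inside the "all functions $V(T)\to\Par(V(G_n))-\emptyset$" complex is compatible with the $\FI$-structure so that one genuinely has an $\FI$-subcomplex.
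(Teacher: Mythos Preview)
Your overall strategy matches the paper's: build the simplicial chain $\FI$-modules $C_j(\fiHom(T,G_\dt))$, show each is finitely generated with an explicit smallness bound, and pass to homology via Noetherianity. The gap is in how you realize the embedding that yields the bound.

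You claim $C_j$ is a subquotient of a direct sum of representables $\R[\Hom_{\FI}([j+|V(T)|],-)]$, reasoning that a $j$-simplex involves $j+|V(T)|$ vertices of $G_n$. But vertices of $G_n$ are not elements of $[n]$, so there is no map from the set of $j$-simplices into $\Hom_{\FI}([j+|V(T)|],[n])$; the representable modules simply do not receive this data. You partially see the problem and try to patch it by ``scaling by $d$,'' but what you describe is a heuristic, not a construction. The paper avoids this by staying entirely within modules built from $\R V(G_\dt)$: since in an $i$-simplex each $\alpha_x$ is a subset of $V(G_n)$ of size between $1$ and $i+1$ (the constraint $\sum_x|\alpha_x|=i+|V(T)|$ with each term $\geq 1$ forces this), there is a genuine embedding
\[
C_i \hookrightarrow \left(\bigoplus_{j=1}^{i+1}\R\binom{V(G_\dt)}{j}\right)^{\otimes V(T)},
\]
and each $\R\binom{V(G_\dt)}{j}$ is $jd$-small as a quotient of $\R V(G_\dt)^{\otimes j}$. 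Proposition~\ref{tensorgen} then makes $C_i$ \ $(i+1)d\cdot|V(T)|$-small, and $H_i$, being a subquotient of $C_i$, inherits the same bound.

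This also corrects your accounting for the factor $(i+1)$: it arises because each individual $\alpha_x$ can have up to $i+1$ elements, not because ``$H_i$ sees chains up to degree $i+1$.'' The smallness bound on $H_i$ comes entirely from $C_i$; the module $C_{i+1}$ plays no role in the degree estimate.
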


While this result might seem somewhat technical, it has one particularly notable consequence about counting graph homomorphisms into $\FI$-graphs.\\

\begin{corab}
Let $G_\dt$ denote a vertex-stable $\FI$-graph of stable degree at most $d$. Then for any graph $T$ the function,
\[
n \mapsto |\Hom(T,G_n)|
\]
agrees with a polynomial of degree at most $|V(T)|\cdot d$ for all $n \gg 0$.\\
\end{corab}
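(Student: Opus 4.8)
The plan is to deduce this corollary from the preceding theorem about the $\Hom$-complexes by observing that $\Hom(T,G_n)$, the set of honest graph homomorphisms from $T$ to $G_n$, is in bijection with the set of vertices (i.e.\ $0$-simplices) of the simplicial complex $\fiHom(T,G_n)$. Indeed, a graph homomorphism $\phi : V(T) \to V(G_n)$ is exactly a multi-homomorphism $\alpha$ with $|\alpha(x)| = 1$ for every $x \in V(T)$, so these are precisely the minimal nonempty simplices of $\fiHom(T,G_n)$; conversely every $0$-simplex of $\fiHom(T,G_n)$ arises this way. Thus $|\Hom(T,G_n)|$ equals the number of $0$-cells of $\fiHom(T,G_n)$.

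Next I would package this count as an $\FI$-module. The assignment $n \mapsto \R^{\{0\text{-simplices of }\fiHom(T,G_n)\}}$ is the degree-$0$ part of the simplicial chain complex computing $H_*(\fiHom(T,G_n);\R)$, and since the transition maps of $G_\dt$ induce maps of $\Hom$-complexes, this is a functor from $\FI$ to $\R$-vector spaces. The representation stability statement of the previous theorem, together with the machinery of \cite{CEF}, gives that the $\FI$-module formed by the simplicial chains $C_i(\fiHom(T,G_\dt);\R)$ is a finitely generated $\FI$-module (this is what underlies the polynomiality of the homology dimensions there); in particular its degree-$0$ piece $C_0$ is finitely generated. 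A finitely generated $\FI$-module has dimension growing eventually like a polynomial in $n$, so $n \mapsto \dim_\R C_0(\fiHom(T,G_n);\R) = |\Hom(T,G_n)|$ agrees with a polynomial for $n \gg 0$.

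It remains to pin down the degree bound $|V(T)| \cdot d$. Here I would argue directly rather than by invoking the coarser bound from the $\Hom$-complex theorem: a homomorphism $\phi : T \to G_n$ is determined by the tuple $(\phi(x))_{x \in V(T)}$ of $|V(T)|$ vertices of $G_n$, subject to the edge constraints. By part (4) of Theorem~\ref{structuretheorem}, for $n \gg 0$ every such tuple of vertices of $G_{n+1}$ is the image under a transition map of a tuple of vertices of $G_n$, and since graph morphisms preserve (non)adjacency appropriately one checks that the $\FI$-module $n \mapsto \R^{\Hom(T,G_n)}$ is a subquotient (in fact a sub-$\FI$-module, using the induced-image property, part (2)) of the $\FI$-module $n \mapsto \R^{V(G_n)^{\times |V(T)|}} \cong (\R^{V(G_\dt)})^{\otimes |V(T)|}$. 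Since $G_\dt$ is vertex-stable of stable degree $\le d$, the $\FI$-module $\R^{V(G_\dt)}$ is finitely generated in degrees $\le d$, hence its dimension grows like a polynomial of degree $\le d$; a $|V(T)|$-fold tensor product then has dimension growing like a polynomial of degree $\le |V(T)| \cdot d$, and a sub-$\FI$-module of a finitely generated $\FI$-module over a Noetherian ring is finitely generated with no larger generation degree, so its dimension polynomial has degree $\le |V(T)| \cdot d$ as well.

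The main obstacle I anticipate is the bookkeeping in the middle step: verifying cleanly that $n \mapsto \R^{\Hom(T,G_n)}$ is genuinely a sub-$\FI$-module (and not merely a subquotient) of the tensor-power $\FI$-module, which is where parts (1), (2), and (4) of the structure theorem all get used to ensure the edge-admissibility condition defining $\Hom(T,G_n)$ is preserved and reflected by the transition maps. Once that is in place, the degree bound is immediate from the standard fact that for a finitely generated $\FI$-module generated in degrees $\le m$ the Hilbert function is eventually polynomial of degree $\le m$, applied to the tensor power generated in degrees $\le |V(T)| \cdot d$.
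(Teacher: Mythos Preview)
Your approach is essentially the paper's: identify $\Hom(T,G_n)$ with the $0$-simplices of $\fiHom(T,G_n)$ and embed the resulting $\FI$-module into the tensor power $(\R V(G_\dt))^{\otimes |V(T)|}$, which is generated in degree $\le |V(T)|\cdot d$. The paper packages this embedding inside Lemma~\ref{homcomplexfg} (the case $i=0$) and then reads off the dimension bound from Theorem~\ref{repstab}.

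There is, however, one genuine error in your final paragraph. You assert that ``a sub-$\FI$-module of a finitely generated $\FI$-module over a Noetherian ring is finitely generated with no larger generation degree.'' This is false: submodules of $\FI$-modules generated in degree $\le m$ can require strictly larger generation degree (the paper says this explicitly just before introducing the notion of $d$-small). So you cannot bound the generation degree of $\R^{\Hom(T,G_\dt)}$ by $|V(T)|\cdot d$ in this way, and the argument as written does not close.

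The fix is exactly the mechanism the paper uses: you do not need a bound on generation degree at all. What you have shown is that $\R^{\Hom(T,G_\dt)}$ is a \emph{submodule} (hence a subquotient) of an $\FI$-module generated in degree $\le |V(T)|\cdot d$; by definition this makes it $(|V(T)|\cdot d)$-small, and Theorem~\ref{repstab}(\ref{poly}) then gives directly that $\dim_\R$ is eventually a polynomial of degree $\le |V(T)|\cdot d$. Replace the incorrect sentence with this, and your proof matches the paper's.
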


It is a well known fact that $n$-colorings of vertices of a graph $T$ are in bijection with $\Hom(T,K_n)$, where $K_n$ is the complete graph on $n$ vertices. The above theorem can therefore be thought of as an extension of the theorem which posits the existence of the \emph{chromatic polynomial}.\\

\begin{remark}
The idea of treating the chromatic polynomial as an ``$\FI$ phenomenon'' was conveyed to the first author by John Wiltshire-Gordon and Jordan Ellenberg. This observation was a large part of the motivation for the present work.\\
\end{remark}

Following our treatment of the $\Hom$-complex, we next turn our attention to configuration spaces of graphs. Given a topological space $X$, the \emph{$n$-stranded configuration space of $X$} is the topological space of $n$ distinct points on $X$,
\[
\Conf_n(X) := \{(x_1,\ldots,x_n) \in X^n \mid x_i \neq x_j, i \neq j\}.
\]
Configuration spaces are in many ways the prototypical topological application of $\FI$-module theory. In fact, one of the results which eventually inspired the study of $\FI$-modules was Church's proof that configuration spaces of manifolds are often representation stable \cite{Ch}. It is an unfortunately true, however, that if $G$ is any graph then the family of topological spaces $\{\Conf_n(G)\}_n$ cannot be representation stable. In fact, they are extremely unstable in this sense, exhibiting factorial growth in their Betti numbers (see the discussion following Theorem \ref{eulerchar}). In this paper we therefore adapt a different approach, recently used by L\"utgehetmann \cite{Lu}. We consider the spaces $\Conf_m(G_\dt)$, where $m$ is fixed and $G_\dt$ is a vertex-stable $\FI$-graph.\\

\begin{thmab}\label{repstabconf}
Let $G_\dt$ be a vertex-stable $\FI$-graph with stable degree at most $d$ whose transition maps are all injective and whose constituent graphs $G_n$ are all connected. Then for any $m \geq 1$ the functor
\[
n \mapsto \Conf_m(G_n)
\]
is representation stable (see Definition \ref{repstable}). In particular, if $i \geq 0$ is fixed, then the function
\[
n \mapsto \dim_\R(H_i(\Conf_m(G_n);\R))
\]
eventually agrees with a polynomial of degree at most $2dm$.\\
\end{thmab}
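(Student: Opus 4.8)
The plan is to realize $n \mapsto \Conf_m(G_n)$ as an $\FI$-space (a functor from $\FI$ to topological spaces) and then apply the machinery already developed for vertex-stable $\FI$-graphs, together with Noetherianity of $\FI$-modules over $\R$, to conclude that each homology functor $n \mapsto H_i(\Conf_m(G_n);\R)$ is a finitely generated $\FI$-module. First I would check that $\Conf_m(-)$ is functorial on $G_\dt$: since the transition maps of $G_\dt$ are injective morphisms of simplicial complexes, each induces a topological embedding $G_n \hookrightarrow G_{n'}$, and this embedding sends $m$-tuples of distinct points to $m$-tuples of distinct points, hence induces a continuous map $\Conf_m(G_n) \to \Conf_m(G_{n'})$ compatible with composition. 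This gives the $\FI$-space structure, and applying $H_i(-;\R)$ yields an $\FI$-module $M_i$ with $M_i(n) = H_i(\Conf_m(G_n);\R)$.

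The heart of the argument is finite generation of $M_i$. Here I would use a discretized model for graph configuration spaces — Abrams' discretized configuration space $\DConf_m(G)$, which is homotopy equivalent to $\Conf_m(G)$ after a suitable subdivision of $G$ (the connectedness hypothesis on the $G_n$ guarantees this equivalence behaves well, and one must be careful that the subdivision can be carried out compatibly across the $\FI$-structure, perhaps by replacing $G_\dt$ with a subdivided $\FI$-graph whose stable degree is controlled). The cells of $\DConf_m(G_n)$ are indexed by $m$-tuples of pairwise-disjoint cells (vertices or edges) of $G_n$; by Theorem \ref{structuretheorem}, parts (3) and (4), for $n \gg 0$ every such tuple in $G_{n+1}$ is the image of a corresponding tuple in $G_n$ under a transition map. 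This says precisely that the cellular chain complex $C_\bullet(\DConf_m(G_\dt))$ is, in each homological degree, a finitely generated $\FI$-module — indeed generated in degree bounded by a linear function of $dm$. Since $\FI$-modules over $\R$ form a Noetherian category, the homology $M_i$, being a subquotient of a finitely generated $\FI$-module, is itself finitely generated.

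Once $M_i$ is finitely generated, representation stability in the sense of Church–Farb (Definition \ref{repstable}) follows from the standard structure theory of finitely generated $\FI$-modules, and polynomiality of $n \mapsto \dim_\R M_i(n)$ for $n \gg 0$ is the Church–Ellenberg–Farb dimension result. For the degree bound, I would track generation degrees: each cell of $\DConf_m(G_n)$ of dimension $i$ uses $i$ edges and $m-i$ vertices from $G_n$; since selecting a bounded number of edges or vertices from a vertex-stable $\FI$-graph of stable degree $\leq d$ produces an $\FI$-module generated in degree $\leq (\text{something linear in } d)$, and the worst case over $0 \le i \le m$ combined with the passage to homology contributes the factor yielding degree at most $2dm$ — the $2$ arising because an edge of $G_n$ is determined by a pair of vertices, each contributing up to $d$ to the generation degree.

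The main obstacle I anticipate is the compatibility of Abrams' subdivision with the $\FI$-structure: the discretized model requires the graph to be "sufficiently subdivided" relative to $m$, and one needs this subdivision to be performed functorially in $n$ without destroying vertex-stability or inflating the stable degree. One way around this is to observe that subdividing each edge a fixed number of times is itself an $\FI$-graph operation (it commutes with the transition maps), and the stable degree of the subdivided $\FI$-graph equals that of the original, so the bound $2dm$ is unaffected; making this precise, and verifying that the homotopy equivalence $\DConf_m \simeq \Conf_m$ is natural enough to induce an isomorphism of $\FI$-modules on homology, is the step requiring the most care.
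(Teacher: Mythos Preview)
Your proposal is correct and follows essentially the same route as the paper: build the $\FI$-space via Abrams' cubical model $\DConf_m$, embed the cellular chain $\FI$-module in degree $i$ into a tensor product of $i$ copies of the edge module and $m-i$ copies of the vertex module, invoke Noetherianity, and read off the degree bound. One small correction: subdividing each edge $m$ times does \emph{not} preserve the stable degree---the new degree-2 vertices lie on edges, so the subdivided $\FI$-graph has stable degree $\leq \max\{d,d_E\}$ rather than $d$ (this is the paper's Proposition~\ref{subdivfi}); the bound $2dm$ still comes out because connectedness forces $d_E \geq d$ and Remark~\ref{small} gives $d_E \leq 2d$, so $\max\{d,d_E\}(m-i) + d_E i \leq d_E m \leq 2dm$.
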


\begin{remark}
Theorem \ref{structuretheorem} implies that the transition maps of any vertex-stable $\FI$-graph are eventually injective. Because the content of the previous theorem is asymptotic, we may always replace our $\FI$-graph with a new $\FI$-graph whose transition maps are injective and agrees with our original graph for all $n \gg 0$. In particular, the assumption that the transition maps of our $\FI$-graph must be injective is not particularly restrictive.\\

The condition that $G_n$ be connected is also not necessary, although the eventual conclusion is a bit less clean if it is not assumed. The most general version of Theorem \ref{repstabconf} is proven as Theorem \ref{strongconf} below.\\
\end{remark}

This theorem was proven for a particular $\FI$-graph (see Example \ref{lutex}) by L\"utgehetmann \cite{Lu}, although he did not use this language. His approach in that work is very topological, and sharpens certain bounds that we discover in this work, although it is limited to that example. Our approach is much more combinatorial in nature, and has the benefit of proving the above theorem for all vertex-stable $\FI$-graphs.\\

\subsection{Algebraic applications}

Our final kind of application involves studying the spectrum of vertex-stable $\FI$-graphs. For any graph $G$, let $\R V(G)$ denote the real vector space with basis indexed by the vertices of $G$. Then there are many natural endomorphisms of $\R V(G)$ which are of interest in algebraic graph theory. Perhaps the most significant is the \emph{adjacency matrix} of $G$. This is the matrix $A_G$ defined on vertices $v \in V(G)$ by
\[
A_Gv = \sum_{\{w,v\} \in E(G)} w
\]
The adjacency matrix of any graph is a real symmetric matrix, and therefore its eigenvalues must be real. This justifies the hypotheses of the following theorem.\\

\begin{thmab}
Let $G_\dt$ be a vertex-stable $\FI$-graph, and let $A_n$ denote the adjacency matrix of $G_n$. We may write the distinct eigenvalues of $A_n$ as,
\[
\lambda_1(n) < \lambda_2(n) < \ldots < \lambda_{r(n)}(n),
\]
for some function $r(n)$. Then for all $n \gg 0$
\begin{enumerate}
\item $r(n)$ is constant. In particular, the number of distinct eigenvalues of $A_n$ is eventually constant;
\item for any $i$ the function
\[
n \mapsto \text{ the multiplicity of $\lambda_i(n)$}
\]
agrees with a polynomial.\\
\end{enumerate}
\end{thmab}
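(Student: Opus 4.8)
The plan is to derive both statements from the polynomiality of closed--walk counts (the preceding theorem) together with two soft inputs: the Hankel--matrix description of the support of a finitely supported measure, and the principle that an integer--valued algebraic function of polynomial growth is a polynomial. Write $p_j(n):=\operatorname{tr}(A_n^j)$ for the number of closed walks of length $j$ in $G_n$, so that $p_0(n)=|V(G_n)|$ and, for each fixed $j$, $p_j$ agrees with a polynomial in $n$ for $n\gg0$; note $\sum_{i=1}^{r(n)}m_i(n)\,\mu_i(n)^j=p_j(n)$ for all $j\ge0$, where $m_i(n)$ denotes the multiplicity of $\lambda_i(n)=:\mu_i(n)$. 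Since $A_n$ is real symmetric, hence diagonalizable, $r(n)$ is the degree of the minimal polynomial of $A_n$, i.e.\ $\dim_\R\R[A_n]$.

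For part (1) I would first bound $r(n)$. Each $\sigma\in\Sn_n$ acts on $G_n$ by a graph automorphism, so $A_n$ is $\Sn_n$--equivariant and $\R[A_n]\subseteq\End_{\Sn_n}(\R V(G_n))$, a space whose dimension is the number of $\Sn_n$--orbits on $V(G_n)\times V(G_n)$. As $G_\dt$ is vertex--stable, $V(G_\dt)$ is a finitely generated $\FI$--set, hence so is $V(G_\dt)\times V(G_\dt)$, and a finitely generated $\FI$--set has a uniformly bounded number of $\Sn_n$--orbits; thus $r(n)\le N$ for a fixed $N$ and all $n\gg0$. Next I would use that, for a positive measure with $r\le N$ distinct atoms, its $(N+1)\times(N+1)$ Hankel moment matrix factors as $U D U^{\top}$ with $U$ a Vandermonde matrix of rank $r$ and $D$ positive diagonal, hence has rank exactly $r$. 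Applied to the spectral measure of $A_n$ this gives $r(n)=\rank\!\big((p_{i+i'}(n))_{0\le i,i'\le N}\big)$ for $n\gg0$; the entries are polynomials in $n$, so the rank of this fixed--size matrix stabilizes, proving (1). Write $\rho$ for the stable value.

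For part (2), with $\rho$ fixed, I would recover the fine spectral data as rational functions of $n$. The minimal polynomial $u_n(t)=\prod_i(t-\mu_i(n))$ is monic of degree $\rho$, and its coefficients solve the $\rho\times\rho$ linear system whose matrix is the invertible principal block $(p_{i+i'}(n))_{0\le i,i'\le\rho-1}$ of the Hankel matrix and whose right--hand side is $-(p_\rho(n),\dots,p_{2\rho-1}(n))$; by Cramer's rule these coefficients are rational in $n$. The degree--$(\rho-1)$ polynomial $v_n(t):=\sum_i m_i(n)\prod_{j\ne i}(t-\mu_j(n))$ is the numerator of $\chi'_{A_n}(t)/\chi_{A_n}(t)=\sum_i m_i(n)/(t-\mu_i(n))$ in lowest terms, recovered from $u_n$ and $p_0(n),\dots,p_{2\rho-1}(n)$ by an explicit partial--fraction identity, so its coefficients are rational in $n$ too; moreover $m_i(n)=v_n(\mu_i(n))/u_n'(\mu_i(n))$. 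For $n$ in a half--line of real values the discriminant of $u_n$ is a nonzero rational function of $n$, so $u_n$ has $\rho$ distinct roots there; these roots are real at the integers and cannot collide, hence stay real and keep their order, so $n\mapsto\mu_i(n)$ is a genuine real branch of an algebraic function over $\Q(n)$. Therefore $m_i(n)=v_n(\mu_i(n))/u_n'(\mu_i(n))$ is an algebraic function of $n$ that is integer--valued at all large integers and of polynomial growth (being $\le p_0(n)$). To finish I would invoke the elementary fact that such a function agrees with a polynomial for $n\gg0$: iterating the forward difference $\Delta$ lowers the leading Puiseux exponent at $\infty$ of an algebraic function, so $\Delta^m m_i(n)\to0$ for some $m$; being integer--valued this difference vanishes at all large integers, and an algebraic function vanishing at infinitely many integers vanishes identically, so $\Delta^m m_i\equiv0$ and $m_i$ is eventually polynomial.

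The main obstacle is the second half of part (2): one has to check carefully that ``the $i$--th smallest eigenvalue'' really is a single algebraic branch for $n\gg0$ — reality and non--collision of the roots of $u_n$ between the integers, and stability of their ordering — and then push through the finite--difference/Puiseux argument for integer--valued algebraic functions; the remaining steps are routine given the paper's framework. There is also a more representation--theoretic route: $\R V(G_\dt)$ is a finitely generated $\FI$--module, so by representation stability $\R V(G_n)\cong\bigoplus_\lambda V_{\lambda[n]}^{\oplus c_\lambda}$ with each $\dim V_{\lambda[n]}$ eventually polynomial in $n$, and the $\Sn_n$--equivariant self--adjoint $A_n$ acts on the $\lambda$--isotypic summand as $1\otimes M_\lambda(n)$ for a bounded--size symmetric matrix $M_\lambda(n)$; then $\operatorname{spec}(A_n)=\bigcup_\lambda\operatorname{spec}(M_\lambda(n))$ and the multiplicity of $\mu_i(n)$ equals $\sum_\lambda(\text{its multiplicity in }M_\lambda(n))\cdot\dim V_{\lambda[n]}$. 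On that route the obstacle migrates to showing the $M_\lambda(n)$ — or just their characteristic polynomials — depend rationally on $n$, equivalently that the equivariant traces $\operatorname{tr}(A_n^j P_{\lambda,n})$ are eventually polynomial in $n$; I would attack this by analyzing the finitely generated $\FI$--module spanned by the length--$j$ walks together with its two endpoint maps $e_0,e_j$ to $\R V(G_\dt)$, which realize $A_n^j$ as $e_0\circ e_j^{\dagger}$ and, after restriction to isotypic components with their invariant inner products, make the relevant traces polynomial in $n$.
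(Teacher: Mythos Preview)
The paper does not actually prove this theorem: Section~\ref{algap} shows only that $A_n$ is $\Sn_n$-equivariant and hence that $r(n)$ is bounded by the eventually constant number of irreducible constituents of $\R V(G_n)$, and then defers the full statement to forthcoming work with Speyer, with the single hint that the argument ``involves proving that these matrices can be made into morphisms of $\FI$-modules by restricting to their actions on the isotypic pieces of $\R V(G_n)$.'' That hint is exactly your \emph{second}, representation-theoretic route. Your \emph{primary} route --- reading $r(n)$ off as the rank of the Hankel matrix of closed-walk moments $p_j(n)=\operatorname{tr}(A_n^j)$, then recovering the minimal polynomial and the multiplicities as rational (respectively algebraic) functions of $n$ --- is genuinely different. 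It has the virtue of using only the polynomiality of the $p_j$ already supplied by Theorem~\ref{walks}, with no need to promote $A_n$ to an $\FI$-module endomorphism on isotypic pieces. The cost is the endgame you correctly flag: that an integer-valued, polynomially bounded branch of an algebraic function over $\Q(n)$ is eventually a polynomial. Your Puiseux/finite-difference sketch is sound (each $\Delta^k m_i$ remains algebraic over $\Q(n)=\Q(n+1)$; $\Delta$ lowers the leading Puiseux exponent at infinity by one; once $\Delta^m m_i$ is $o(1)$ and integer-valued on a half-line of integers it vanishes there, whence identically, since $y$ would divide its irreducible minimal polynomial), but it is more delicate than anything the paper's structural approach would need --- there the corresponding difficulty migrates, as you note, to showing that the compressed matrices on isotypic components vary well with $n$.
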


\begin{remark}
The proof of the above theorem will be appearing in upcoming work of the first author and David Speyer. It is included in this paper for completeness's sake. Hints towards the proof are given in Section \ref{algap}.\\
\end{remark}

Perhaps the simplest example one can call upon to illustrate this theorem is the complete graph. In this instance we see that the eigenvalues of $A_n$ are given by $-1$ and $n-1$ with multiplicities $n-1$ and $1$, respectively. We see immediately from this that the number of distinct eigenvalues of $A_n$ becomes constantly 2 beginning at $n = 2$, and the multiplicities of these eigenvalues are given by polynomials.\\

\subsection{Outline}

The overall structure of the present work is as follows. We begin by recalling necessary background. This ranges from graph theory (Section 2.1) to the configuration spaces of graphs (Section 2.2) to the theory of $\FI$-modules and representation stability (Section 2.3). Our hope is that this background will be sufficient so that readers from a large variety of fields can better follow the work in the body of the paper.\\

Following this, we turn our attention to the basic definitions and examples from the theory of $\FI$-graphs (Section 3.1). We then describe the phenomenon of vertex-stability and its major structural consequences (Section 3.2). This third section is then capped off by a more technical chapter which solves the question of when the transition maps of a vertex-stable $\FI$-graph must begin to have induced image (Section 3.3). The fourth section is dedicated to proving the applications detailed above, as well as various smaller consequences that one might find interesting.\\

To conclude the work, we consider generalization of the theory of $\FI$-graphs in two distinct directions. Firstly, we consider what would happen if instead of $\FI$, one considered functors from certain other categories into the category of graphs (Section 5.1). In particular, we argue that virtually everything described in the paper will have some analog for $\FI^m$-graphs and $\VI(q)$-graphs (see Definition \ref{othercat}). Secondly, we consider higher dimensional analogs of $\FI$-graphs. Namely, we spend a bit of time considering general $\FI$-simplicial-complexes and show that certain structural facts will continue to work in this context (Section 5.2).\\

\section*{Acknowledgements}
We would like to send our deepest thanks to Tom Church, David Speyer, and Andrew Snowden for their helpful suggestions. We also thank Jordan Ellenberg, and John Wiltshire-Gordon for their early inspiration. The first author was supported by NSF grant DMS-1704811.\\

\section{Background}

\subsection{Graph Theory}

For the purposes of this paper, we will only consider finite graphs with no multi-edges or self-loops. Note that graphs will be permitted to be disconnected.\\

\begin{definition}\label{induced}
A \textbf{graph} is a 1-dimensional simplicial complex. Given a graph $G$, we will write $V(G)$ to denote its vertex set, and $E(G)$ to denote its edge set. If $v \in V(G)$, then $\mu(v)$ will be used to denote its \textbf{valency}, or \textbf{degree}. The \textbf{minimum degree} of a vertex of $G$ will be denoted $\delta(G)$, while the \textbf{maximum degree} will be written $\Delta(G)$.\\

A \textbf{homomorphism} of graphs $\phi:G \rightarrow G'$ is a map of sets $\phi:V(G) \rightarrow V(G')$ such that if $\{x,y\} \in E(G)$, then $\{\phi(x),\phi(y)\} \in E(G')$. The category of graphs and graph homomorphisms will be denoted $\mathbf{Graph}$.\\

A \textbf{subgraph} of a graph $G$ is a graph $G'$ with inclusions $V(G') \subseteq V(G)$ and $E(G') \subseteq E(G)$. We say that a subgraph $G'$ is \textbf{induced} if for all $x,y \in V(G')$, $\{x,y\} \in E(G')$ whenever $\{x,y\} \in E(G)$.
\end{definition}

In this work, we will be applying the theory of $\FI$-modules to the study of certain natural families of graphs. Our applications will be grouped into three categories: enumerative, topological, and algebraic.\\

To begin, we review some elementary facts and notations from enumerative graph theory. Much of what follows can be found in any standard text in graph theory (see, for instance, \cite{B}).\\

\begin{definition}
Let $G$ and $H$ be graphs. We write $\eta_H(G)$ to denote the total number of distinct subgraphs of $G$ which are isomorphic to $H$. We will also write $\eta^{ind}_H(G)$ to denote the total number of distinct induced subgraphs of $G$ which are isomorphic to $H$
\end{definition}

\begin{remark}
When one speaks of computing the number of copies of $H$ inside $G$, one is usually talking about counting the number of graph injections from $H$ to $G$ up to post-composition by automorphisms of $H$. This is the perspective we take in this work.\\
\end{remark}

The question of determining whether $\eta_H(G) > 0$ is known as the subgraph isomorphism problem. It is known, for general choices of $H$ and $G$, that the subgraph isomorphism problem is NP-complete \cite{Co}\cite{KOU}. The analogous induced subgraph isomorphism problem is also known to be NP-complete, although it is also known to be solvable in polynomial time in many instances \cite{Sy}. In this paper, we will be concerned with computing these two counting invariants across the members of certain families of graphs (see Theorem \ref{enumthm}).\\

After enumerative considerations, we next turn our attention to topological applications of the $\FI$-graph structure. Our first application is related to so called Hom-complex construction. Interest in these complexes originates from work of Lov\'asz \cite{Lo}, wherein similar spaces were used to resolve the Kneser conjecture. Babson and Koslov later showed that the the spaces used in Lov\'asz's work were specific examples of $\Hom$-complexes \cite{BK,BK2}. Following this, there has been some amount of interest in various topological aspects of these spaces (see \cite{Do,Do2} for some examples). For instance, it is known the every simplicial complex can be realized as the Hom-complex of some pair of graphs \cite{Do2}. In this paper, we will approach the Hom-complex from the perspective of representation stability.\\

\begin{definition}\label{homcomplexdef}
Let $T,G$ be graphs. A \textbf{multi-homomorphism} from $T$ to $G$ is a map of sets
\[
\alpha:V(T) \rightarrow \mathcal{P}(V(G))-\emptyset
\]
such that if $\{x,y\} \in E(T)$ then for all $x' \in \alpha(x)$ and all $y' \in \alpha(y)$, $\{x',y'\} \in E(G)$. The \textbf{Hom-complex of $T$ and $G$}, $\fiHom(T,G)$, is the simplicial complex whose simplicies are multi-homomorphisms from $T$ to $G$ such that $\alpha$ is a face of $\tau$ if and only if $\alpha(x) \subseteq \tau(x)$ for all $x \in T$.\\
\end{definition}

We will later construct large families of graphs $G_n$, indexed by the natural numbers, such that for any graph $T$, the complexes $\fiHom(T,G_n)$ are representation stable in the sense of Church and Farb (see Theorem \ref{homcomrepstab} and Definition \ref{repstable}).\\

Following this, we will spend some time proving facts about configuration spaces of graphs. The background for this material is detailed in the next section.\\

The final type of application we will concern ourselves with relates to spectral properties of graphs. More specifically, we will concern ourselves with eigenspaces and eigenvalues of adjacency and Laplacian matrices.\\

\begin{definition}
Let $G$ be a graph. The \textbf{adjacency matrix of $G$}, $A_G$, is the matrix whose columns and rows are labeled by vertices of $G$ and whose entries are defined by
\[
(A_G)_{(v,w)} := \begin{cases} 1 &\text{ if $\{v,w\} \in E(G)$}\\ 0 &\text{ otherwise.}\end{cases}
\]
The \textbf{Laplacian matrix of $G$}, $L_G$, is the difference $D_G - A_G$, where $D_G$ is the diagonal matrix whose entries display the degrees of the vertices of $G$.\\

The collection of eigenvalues of $A_G$ will be referred to as the \textbf{spectrum} of $G$.
\end{definition}

There are many things that one may immediately observe from the fact that $A_G$ and $L_G$ are real and symmetric. For instance:\\
\begin{enumerate}
\item $A_G$ and $L_G$ are diagonalizable. In particular, all of their eigenvalues are ordinary;
\item the eigenvalues of $A_G$ and $L_G$ are real. Therefore, they can be ordered as $\lambda_1 \geq \lambda_2 \geq \ldots \geq \lambda_{|V(G)|}$.\\\label{ordered}
\end{enumerate}

In our work, we will be largely concerned with the following two questions: Given certain natural families of graphs $G_n$, indexed by the natural numbers, how many distinct eigenvalues can $A_{G_n}$and $L_{G_n}$ have (as a function of $n$), and how do the multiplicities of these eigenvalues change with $n$. For instance, it is easily verified that the adjacency matrix of the complete graph $K_n$, with $n \geq 2$, will have distinct eigenvalues $n-1$ and $-1$ appearing with multiplicities 1 and $n-1$, respectively. In other words, so long as $n$ is sufficiently large, the complete graph $K_n$ has a fixed number of distinct eigenvalues, and the corresponding eigenspaces have dimensions which are polynomial in $n$. One of the main motivations for this paper is proving a framework which explains such behavior.\\

For references on graph spectra, see \cite{B,CDS,CRS,CRS2}.\\

\subsection{Configuration spaces of graphs}

\begin{definition}
Let $G$ be a graph. Then the \textbf{$m$-stranded configuration space of $G$} is the topological space
\[
\Conf_m(G) := \{(x_1,\ldots,x_m) \in G^m \mid x_j \neq x_i, i \neq j.\}
\]
\end{definition}

Configuration spaces of various topological spaces have a long history including work of McDuff \cite{McD}, and Church, Ellenberg, and Farb \cite{CEF}, among many others. Much of the literature is focused on the configuration spaces of manifolds. Recently, some attention has been given to the configuration spaces of graphs, due to their connections with robotics \cite{Gh}. Much of the newly emerging literature seems to indicate that these configuration spaces are heavily influenced by the combinatorics of the graph (see \cite{A,Gh,Gal,FS,R,Lu,CL}, for a small sampling). For instance, the following theorem of Abrams puts a very natural cellular structure on $\Conf_m(G)$, which depends highly on the vertices of $G$ of degree at least 3. Cellular models have also been proposed by \'Swiatkowski \cite{Sw}, Ghrist \cite{Gh}, L\"utgehetmann \cite{Lu}, and Wiltshire-Gordon \cite{WG}.\\

\begin{definition}
Let $G$ be a graph. The \textbf{$m$-th subdivision} of $G$ is the graph $G^{(m)}$ obtained from $G$ by adding $m-1$ vertices of degree 2 to every edge of $G$.\\
\end{definition}

\begin{theorem}[Abrams, \cite{A}]\label{cellularmodel}
Let $G$ be a graph, and let $\DConf_m(G)$ denote the sub-complex of the cubical complex $G^m$ comprised of cells of the form
\[
\sigma_1 \times \ldots \times \sigma_m
\]
where $\sigma_i$ is either an edge or vertex of $G$, and for each $i \neq j$,
\[
\sigma_i \cap \partial(\sigma_j) = \emptyset.
\]
Then $\DConf_m(G^{(m)})$ is homotopy equivalent to $\Conf_m(G^{(m)})$.\\
\end{theorem}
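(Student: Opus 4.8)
The plan is to follow Abrams' original discretization argument, which produces a deformation retraction from $\Conf_m(G^{(m)})$ onto the cubical subcomplex $\DConf_m(G^{(m)})$. The key point of the $m$-th subdivision is that after subdividing, every edge of $G^{(m)}$ is ``long enough'' to accommodate all $m$ particles: any embedded path or edge-path in $G^{(m)}$ that one needs to slide points along contains enough room that the $m$ points can always be separated onto distinct closed cells meeting the non-adjacency condition $\sigma_i \cap \partial(\sigma_j) = \emptyset$. First I would set up the cubical structure: $G^m$ is naturally a cubical complex whose cells are products $\sigma_1 \times \cdots \times \sigma_m$ with each $\sigma_i$ a vertex or (closed) edge of $G^{(m)}$, and $\DConf_m(G^{(m)})$ is the full subcomplex spanned by those cells satisfying the combinatorial disjointness condition. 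One checks this is genuinely a subcomplex (closed under taking faces), since passing to a face only shrinks the $\sigma_i$'s and hence cannot destroy the condition $\sigma_i \cap \partial(\sigma_j) = \emptyset$.

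The heart of the argument is constructing the deformation retraction. Given a point $(x_1,\ldots,x_m) \in \Conf_m(G^{(m)})$, each $x_i$ lies in some open cell of $G^{(m)}$ (a vertex or the interior of an edge). One pushes the configuration, coordinate by coordinate, toward the nearest vertices while maintaining distinctness, using the subdivision hypothesis to guarantee there is always somewhere to go: because every original edge has been subdivided into at least $m$ edges, whenever several particles crowd near a vertex of high degree there are enough adjacent edge-segments to park them on pairwise-non-adjacent cells. Formally, I would define the retraction in stages according to a chosen linear order on the particles, or alternatively invoke a Morse-theoretic / PL-collapse argument (as in the Abrams--Ghrist circle of ideas): one shows each cell of $\Conf_m(G^{(m)})$ not already in $\DConf_m(G^{(m)})$ can be collapsed across a free face, and that these collapses can be organized into a global deformation retraction. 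Continuity and the fact that the homotopy fixes $\DConf_m(G^{(m)})$ pointwise are then routine to verify.

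The main obstacle is precisely the bookkeeping that makes the subdivision bound $m$ sharp: one must verify that for every possible local configuration of up to $m$ particles near a vertex or along a path in $G^{(m)}$, there is enough room to separate them onto closed cells satisfying the disjointness condition, and that the separating moves can be performed continuously and consistently across overlapping local pictures so that they assemble into a well-defined global retraction. This is a purely combinatorial-geometric lemma about $G^{(m)}$, but it is where the real content lies; once it is in hand, the homotopy equivalence follows formally. Since this theorem is quoted from \cite{A} and used as a black box in what follows, I would in practice cite Abrams' thesis for the full details and only sketch the discretization idea, emphasizing the role of the subdivision in separating particles.
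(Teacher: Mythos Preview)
The paper does not prove this theorem; it is stated as a result of Abrams and cited from his thesis \cite{A}, with the remark that ``the original work of Abrams is more precise than the above, however it is sufficient for what follows.'' You have correctly identified this, and your sketch of the discretization/deformation-retraction argument is an accurate informal description of Abrams' approach, so your proposal to cite \cite{A} and only outline the idea matches exactly what the paper does.
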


The original work of Abrams is more precise than the above, however it is sufficient for what follows. We note that for any graph $G$, $\Conf_m(G^{(m)})$ is identical to $\Conf_m(G)$. We observe that $\DConf_m(G)$ is the largest subcomplex of $G^m$ which avoids the diagonals $x_i = x_j$. Abrams' theorem therefore states that this complex will contain the same topological information as $\Conf_m(G)$ so long as there are enough vertices in $G$ such that every coordinate in a given configuration can fit on a single edge using only vertices.\\

It is often convenient to visualize the cells of $\DConf_m(G^{(m)})$ as living on the graph $G^{(m)}$. In such a visualization, we bolden the vertices and edges appearing in the cell on the graph $G^{(m)}$, and label the position in which they appear in the cell. For instance, Figure \ref{cellexample} shows a cell of $\DConf_2(G^{(2)})$ for a particular choice of $G$.\\

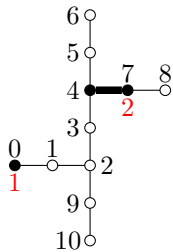
\begin{figure}
\begin{tikzpicture}
\tikzstyle{every node}=[draw,circle,fill=white,minimum size=4pt,
                            inner sep=0pt]
    \draw (0,0) node (0) [fill = black, label=above:$0$, label = below:\textcolor{red}{$1$}] {}
		      --(.5,0) node (1) [label=above:$1$]{}
					--(1,0) node (2) [label=right:$2$]{}
					--(1,.5) node (3) [label=left:$3$]{}
					--(1,1) node (4) [fill = black, label=left:$4$]{}
					--(1,1.5) node (5) [label=left:$5$]{}
					--(1,2) node (6) [label=left:$6$]{}
    			(4) -- (1.5,1) node (7) [fill = black, label=above:$7$, label=below:\textcolor{red}{$2$}]{}
					--(2,1) node (8) [label=above:$8$]{}
					(2) -- (1,-.5) node (9) [label=left:$9$]{}
					--(1,-1) node (10) [label=left: $10$]{};
		 \draw [line width=1mm] (4) -- (7);
\end{tikzpicture}
\caption{The cell $(v_0,e_{4,7})$.}\label{cellexample}
\end{figure}

Among the many incredible theoretical properties of configuration spaces of graphs is the precise computation of their Euler characteristic. The following result is due to Gal, and provides a large part of the motivation for this work.\\

\begin{theorem}[Gal, \cite{Gal}]\label{eulerchar}
Let $G$ be a graph, and let $\mathfrak{e}(t)$ denote the exponential generating function
\[
\mathfrak{e}(t) = \sum_{m \geq 0} \frac{\chi(\Conf_m(G))}{m!}t^m.
\]
Then,
\[
\mathfrak{e}(t) = \frac{\prod_{v \in V(G)}(1-(1-\mu(v))t)}{(1-t)^{|E(G)|}}.
\]
\end{theorem}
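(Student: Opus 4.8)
The plan is to pass to Abrams' discretized model (Theorem~\ref{cellularmodel}) and evaluate the resulting generating function by a transfer-matrix computation along the subdivided edges of $G$. Abrams' theorem gives $\DConf_{m}(G^{(m)})\simeq\Conf_{m}(G)$, and more generally (by the sharper form of his result) $\DConf_{m}(G^{(N)})\simeq\Conf_{m}(G^{(N)})=\Conf_{m}(G)$ as soon as $N\ge N_{0}(m)$ for a suitable $N_{0}(m)$; thus $\chi(\Conf_{m}(G))$ equals the alternating sum $\sum_{\sigma}(-1)^{\dim\sigma}$ over the cells $\sigma$ of the finite cubical complex $\DConf_{m}(G^{(N)})$, for any such $N$. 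Unwinding Abrams' condition, a $k$-cell of $\DConf_{m}(G^{(N)})$ is precisely the datum of a matching $M$ of $G^{(N)}$ with $|M|=k$, a set $W$ of vertices of $G^{(N)}$ disjoint from the endpoints of the edges in $M$, and a bijective labelling of the $k+|W|=m$ chosen cells by $\{1,\dots,m\}$; its dimension is $k$. Weighting each cell by $(-1)^{k}$, dividing by $m!$, and summing over $m$ therefore gives the polynomial
\[
F_{N}(t):=\sum_{m}\frac{\chi\bigl(\DConf_{m}(G^{(N)})\bigr)}{m!}\,t^{m}=\sum_{M}(-t)^{|M|}(1+t)^{\,|V(G^{(N)})|-2|M|},
\]
the sum taken over all matchings $M$ of $G^{(N)}$. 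Since $[t^{m}]F_{N}(t)=\chi(\Conf_{m}(G))/m!$ once $N\ge N_{0}(m)$, the $F_{N}$ converge coefficientwise (equivalently, $t$-adically) to $\mathfrak e(t)$, and it remains only to compute this limit.

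I would evaluate $\lim_{N}F_{N}$ by reading the right-hand side as a state sum on $G^{(N)}$: attach to each original vertex $v$ of $G$ a state in $\{U,C\}$ recording whether $v$ is covered by $M$, and transport states along each subdivided edge, which is a path of $N$ edges, by iterating a $2\times2$ transfer matrix. Keeping track of the weights (an edge not in $M$: weight $1$; an edge in $M$: weight $-t$; an uncovered interior vertex: weight $1+t$; two incident edges both in $M$: forbidden), the transfer across a long subdivided edge is governed by $K(t)=\left(\begin{smallmatrix}1+t & -t\\ 1 & 0\end{smallmatrix}\right)$, whose characteristic polynomial is $\lambda^{2}-(1+t)\lambda+t=(\lambda-1)(\lambda-t)$. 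This is the crucial point: since $K(t)$ has the simple top eigenvalue $1$ and the other eigenvalue $t$, its powers converge $t$-adically to the projection onto the $\lambda=1$-eigenspace, and a short computation then identifies the limiting edge-transfer matrix as the rank-one matrix $\tfrac{1}{1-t}\,\xi\otimes\xi$ with $\xi=(1,-t)$. Rank-one-ness decouples the states at the two ends of each edge, so the limiting state sum factors into one factor $\tfrac{1}{1-t}$ for each edge of $G$ and one factor for each vertex of $G$.

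It remains to identify the vertex factor. At a vertex $v$ of degree $\mu(v)$ the state $U$ contributes $(1+t)\,\xi_{U}^{\mu(v)}=1+t$ (every incident edge uncovered at $v$), while the state $C$ contributes $1\cdot\sum\xi_{C}\,\xi_{U}^{\mu(v)-1}=-\mu(v)t$, the sum ranging over the choice of which of the $\mu(v)$ incident edges covers $v$; the total local factor at $v$ is $(1+t)-\mu(v)t$, exactly the factor attached to $v$ in the statement. Multiplying the vertex factors by the $(1-t)^{-|E(G)|}$ coming from the edges yields the claimed formula for $\mathfrak e(t)$.

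I expect the main obstacle to be the transfer-matrix bookkeeping at vertices of large degree together with the limiting step: one has to organize the state sum so that it genuinely factors over vertices and edges, and the mechanism that makes this work is precisely the rank-one-ness of the limiting edge operator --- equivalently, the computation $\operatorname{tr}(K(t))=1+t$, $\det(K(t))=t$ forcing eigenvalues $1$ and $t$ --- without which $\lim_{N}F_{N}$ would not be visibly rational. The interchange of $\lim_{N}$ with coefficient extraction is harmless, since $[t^{m}]F_{N}$ is eventually constant in $N$ by Abrams' theorem. (Alternatively, one could try to establish a subdivision/deletion recursion for $\mathfrak e(G;t)$ directly on the edges of $G$, but making that recursion precise appears no simpler.)
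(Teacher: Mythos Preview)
The paper does not prove this theorem; it is quoted from Gal~\cite{Gal} as background, used only to motivate the later discussion of configuration spaces. So there is no proof in the paper to compare your proposal against.

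That said, your argument is sound and gives a self-contained proof. The identification of cells of $\DConf_m(G^{(N)})$ with labelled pairs $(M,W)$ is correct, and the resulting identity
\[
F_N(t)=\sum_{M}(-t)^{|M|}(1+t)^{|V(G^{(N)})|-2|M|}
\]
is exactly (a reweighting of) the matching polynomial of $G^{(N)}$. The transfer step is also right: with your conventions the one-step transfer is $K(t)=\left(\begin{smallmatrix}1+t&-t\\1&0\end{smallmatrix}\right)$, and the \emph{full} edge tensor is $T_N=T_1K(t)^{N-1}$ with $T_1=\left(\begin{smallmatrix}1&0\\0&-t\end{smallmatrix}\right)$. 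Since $K(t)$ has eigenvalues $1$ and $t$, one has $K(t)^{N-1}\to\tfrac{1}{1-t}\left(\begin{smallmatrix}1\\1\end{smallmatrix}\right)(1,-t)$ $t$-adically, and then $T_\infty=T_1\cdot\tfrac{1}{1-t}\left(\begin{smallmatrix}1\\1\end{smallmatrix}\right)(1,-t)=\tfrac{1}{1-t}\,\xi\,\xi^{T}$ with $\xi=(1,-t)^{T}$, recovering exactly the symmetric rank-one form you asserted. (It is worth making this two-step passage $K^{N-1}\to P$, then $T_1P$, explicit, since $P$ itself is not $\xi\otimes\xi$; only after multiplying by $T_1$ do you get the symmetric tensor.) The vertex contraction then gives $(1+t)-\mu(v)t=1-(\mu(v)-1)t$ as you wrote, and the edge factors give $(1-t)^{-|E(G)|}$.

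The only point requiring outside input is the ``sharper form'' of Abrams' theorem you invoke, namely that $\DConf_m(G^{(N)})\simeq\Conf_m(G)$ for all $N\ge N_0(m)$ rather than just $N=m$. This is indeed in Abrams' thesis (his hypotheses are monotone in the amount of subdivision), so the coefficientwise stabilization of $F_N$ is justified. Gal's own proof proceeds differently, working directly with a CW model for $\Conf_m$ of a complex rather than through matchings; your approach has the advantage of making the product structure of the answer transparent via the rank-one degeneration.
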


A theorem of Ghrist \cite{Gh} and  \'Swiatkowski \cite{Sw} implies that $H_i(\Conf_m(G)) = 0$ for all graphs $G$ and all $i$ larger than the number of vertices of $G$ of degree at least 3. In particular, it is independent of $m$. It follows from this fact, as well as the theorem of Gal, that the Betti numbers of $\Conf_m(G)$ should be expected to grow in $m$ like $m!$. Such growth precludes $\Conf_m(G)$ from being representation stable (see Definition \ref{repstable} Theorem \ref{repstab}). Looking again at the theorem of Gal, we see that the Euler characteristic of $\Conf_m(G)$, as a function of $m$, looks like $m!$ multiplied by a polynomial in invariants of $G$. In other words, the extreme growth in the Euler characteristic seems to be be primarily influenced by the number of points being configured, rather than the the graph $G$ itself.\\

One guiding philosophy of the present work is that if we fix the number of points begin configured, and instead allow the graph itself to vary, then the collection of spaces $\Conf_m(G_n)$ will be representation stable in the sense of Definition \ref{repstable}.\\

This philosophy has also appeared in recent work of L\"utgehetmann \cite{Lu}. Theorem \ref{repstabconf} extends the main theorem of that work.\\

\subsection{$\FI$-modules and representation stability}

The main tool we introduce in this paper are objects we refer to as $\FI$-graphs. Before working through the technical details of that construction, we must first discuss a key auxiliary concept: $\FI$-modules.\\

\begin{definition}
Let $\FI$ denote the category whose objects are the sets $[n] := \{1,\ldots,n\}$ finite sets and injections. An $\FI$-module is a (covariant) functor from $\FI$ to the category of $\R$ vector spaces.\\
\end{definition}

\begin{remark}
Note that most works in the literature allow $\FI$-modules to be valued in any module category over a commutative ring. For our purposes, we will mostly consider $\FI$-modules which are valued in vector spaces over $\R$. In certain areas of the paper, such as Lemma \ref{homcomplexfg}, we consider $\FI$-modules over $\Z$. Most of the definitions and theorems in this section work equally well in this case.\\
\end{remark}

Note that we will often write $V_n := V([n])$ and $f_\as := V(f)$. One should note that, for any $n$, the endomorphisms of $[n]$ in $\FI$ are precisely the permutations on $n$ letters, $\Sn_n$. Functoriality therefore implies that, for each $n$, $V_n$ is a representation of $\Sn_n$. This observation leads to the following perspective on $\FI$-modules: an $\FI$-module $V$ is a series of $\Sn_n$-representations, $V_n$, with $n$ increasing, which are compatible with one another through the action of the maps $V(f)$.\\

Just as with the study of vector spaces, it is often reasonable to restrict ones attention to those objects which are finitely generated in the appropriate sense. Before describing how such a condition can be applied to $\FI$-modules, we note that the category of $\FI$-modules and natural transformations is abelian. Indeed, one may define the usual abelian operations point-wise. In fact, one may very naturally define constructions such as direct sums and products, tensor products, symmetric products, etc. for $\FI$-modules.\\

\begin{definition}
An $\FI$-module $V$ is said to be \textbf{finitely generated in degree $\leq d$} if there is a finite set
\[
\{v_i\} \subseteq \sqcup_{n = 0}^d V_n
\]
which no proper submodule of $V$ contains. Equivalently, the set $\{v_i\}$ generates $V$ if, for all $n$, the vector space $V_n$ is spanned by the images of the $v_i$ under the various maps $f_\as$ induced by $V$ from injections of sets.\\
\end{definition}

Perhaps the most remarkable thing about finitely generated $\FI$-modules is that they exhibit a Noetherian property. The following was first proven by Snowden in \cite{Sn}, and later repoven by Church, Ellenberg, and Farb in \cite{CEF}.\\

\begin{theorem}[Snowden, \cite{Sn}; Church, Ellenberg, and Farb, \cite{CEF}]
Let $V$ be a finitely generated $\FI$-module. Then every submodule of $V$ is also finitely generated.\\
\end{theorem}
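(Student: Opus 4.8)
The plan is to follow the standard two-stage strategy: reduce to free $\FI$-modules, and then prove the free case by a combinatorial well-partial-order argument. Recall that for each $d\geq 0$ there is the free $\FI$-module $M(d)$ with $M(d)_n = \R[\Hom_{\FI}([d],[n])]$, generated by $\mathrm{id}_{[d]}\in M(d)_d$, and that any finitely generated $\FI$-module $V$ is a quotient $q\colon P\twoheadrightarrow V$ with $P=\bigoplus_{i=1}^k M(d_i)$ a finite direct sum of such free modules. First I would observe that the problem reduces to showing $P$ is \emph{Noetherian} (all submodules finitely generated): given a submodule $W\subseteq V$, the map $q$ carries the submodule $q^{-1}(W)\subseteq P$ onto $W$, so if $q^{-1}(W)$ is finitely generated then so is $W$.

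Next I would peel off the summands of $P$ one at a time. For each $i$ there is a short exact sequence $0\to M(d_i)\to P^{(\leq i)}\to P^{(\leq i-1)}\to 0$, and since the category of $\FI$-modules is abelian with kernels and cokernels computed pointwise, an extension of a finitely generated $\FI$-module by a finitely generated $\FI$-module is again finitely generated (take generators of the sub together with lifts of generators of the quotient). Applying this inductively, it suffices to prove that a single $M(d)$ is Noetherian.

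This last point is the heart of the matter, and I would prove it by a Gröbner-type argument. The natural monomial basis of $M(d)_n$ is the set of injections $[d]\hookrightarrow[n]$. It is cleanest to pass through the category $\OI$ of totally ordered finite sets with order-preserving injections: the order-preserving injections $[d]\hookrightarrow[n]$ correspond to $d$-element subsets $a_1<\cdots<a_d$ of $[n]$, and recording the gap vector $(a_1-1,\,a_2-a_1-1,\,\ldots,\,n-a_d)$ identifies the monomials of $M^{\OI}(d)$, compatibly with the $\OI$-transition maps, with a subset of a well-partially-ordered set — essentially Dickson's lemma on $\N^{d+1}$ (equivalently a Higman's-lemma argument on words). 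Fixing a compatible term order, the leading-monomial submodule of any $\OI$-submodule is then generated by finitely many monomials, and a standard normal-form/division argument upgrades this to finite generation of the submodule itself; hence $\OI$ is Noetherian. To descend to $\FI$, one uses that the forgetful functor $\OI\to\FI$ is essentially surjective and that each $\Hom_{\FI}([d],[n])$ is a union of only $|\Sn_d|$ orbits under $\OI$-morphisms, so that the pullback of a finitely generated $\FI$-module is a finitely generated $\OI$-module and the pullback is faithful on the lattices of submodules; Noetherianity therefore transfers, giving in particular that each $M(d)$ is Noetherian.

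The main obstacle is precisely the claim that $M(d)$ is Noetherian: the reduction to free modules is formal, but this step genuinely consumes a well-quasi-order input, and care is needed both in setting up the term order so that "leading-term submodule finitely generated'' implies "submodule finitely generated,'' and in checking that the comparison between $\OI$ and $\FI$ reflects inclusions of submodules. (One can instead argue directly with $\FI$ by induction on $d$ via the shift functor, following Church, Ellenberg, and Farb, but the well-ordering difficulty simply reappears there in the form of bounding generation and relation degrees under shifting.)
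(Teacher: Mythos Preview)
The paper does not prove this theorem; it is quoted as a known result with citations to Snowden and to Church--Ellenberg--Farb, so there is no ``paper's own proof'' to compare against. Your outline is a correct sketch of one of the standard arguments --- essentially the Gr\"obner-basis approach of Sam and Snowden (also cited in the paper as \cite{SS}) rather than the original inductive argument of \cite{CEF}, which proceeds by induction on generating degree via the shift functor and does not pass through $\OI$ or Higman/Dickson explicitly.

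Two small remarks on presentation. First, in your reduction step you state that an extension of finitely generated by finitely generated is finitely generated, and then conclude it suffices to show each $M(d)$ is Noetherian. The logical link you want is the (standard) fact that an extension of a \emph{Noetherian} module by a \emph{Noetherian} module is Noetherian; your stated lemma is exactly the ingredient needed for that, but you should say so, since a reader might otherwise wonder why finite generation of $P$ (which is obvious) is relevant to Noetherianity of $P$. Second, your phrase ``each $\Hom_{\FI}([d],[n])$ is a union of only $|\Sn_d|$ orbits under $\OI$-morphisms'' is slightly off as written; the precise statement you need is that the restriction of the $\FI$-module $M(d)$ along $\OI\to\FI$ is isomorphic to a direct sum of $d!$ copies of the $\OI$-module $M^{\OI}(d)$ (coming from the factorization of any injection as an order-preserving injection followed by a permutation of $[d]$), hence is Noetherian over $\OI$, and that finite generation of a submodule over $\OI$ implies finite generation over $\FI$ because $\OI$-morphisms are in particular $\FI$-morphisms. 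With those two clarifications your sketch is complete.
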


We will use the above Noetherian property to deduce various somewhat surprising combinatorial facts about $\FI$-graphs.\\

As one might expect, if $V$ is an $\FI$-module generated in degree $\leq d$, then it is not necessarily the case that submodules of $V$ are also generated in degree $\leq d$. Despite this, one may still conclude certain things about submodules of $V$ based on properties of $V$. For this reason, we introduce the following.\\

\begin{definition}
We say that a finitely generated $\FI$-module $V$ is \textbf{$d$-small} if $V$ is a subquotient of an $\FI$-module which is finitely generated in degree $\leq d$.\\
\end{definition}

\begin{proposition}[Church, Ellenberg, and Farb, \cite{CEF}]\label{tensorgen}
If $V$ is finitely generated in degree $\leq d$ and $W$ is finitely generated in degree $\leq e$, then
\begin{enumerate}
\item $V \oplus W$ and $V \times W$ are generated in degree $\leq \max\{d,e\}$;
\item $V \otimes W$ is generated in degree $\leq d+e$.
\end{enumerate}
\end{proposition}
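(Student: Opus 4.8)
The plan is to argue both parts directly from the definition of generation degree, handling them in turn; part~(1) is formal, and the only real content is in part~(2). For part~(1), I would fix finite generating sets $\{v_i\} \subseteq \sqcup_{n=0}^{d} V_n$ and $\{w_j\} \subseteq \sqcup_{n=0}^{e} W_n$, and first observe that in the abelian category of $\FI$-modules the finite product and coproduct coincide and are computed pointwise, so $V \times W = V \oplus W$ and it suffices to treat the direct sum. Then I would check that the finite set $\{(v_i,0)\} \cup \{(0,w_j)\}$, all of whose elements lie in degrees $\leq \max\{d,e\}$, generates $V \oplus W$: the structure maps of a direct sum act componentwise, so for each $n$ the images of the $(v_i,0)$ span $V_n \oplus 0$ and the images of the $(0,w_j)$ span $0 \oplus W_n$, and together these span $(V \oplus W)_n = V_n \oplus W_n$.

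For part~(2), I would recall that the tensor product of $\FI$-modules is formed pointwise with the diagonal action: $(V \otimes W)_n = V_n \otimes_{\R} W_n$, and for an injection $h$ the induced map $(V \otimes W)(h)$ sends $\alpha \otimes \beta$ to $(h_\as\alpha) \otimes (h_\as\beta)$. Fixing generating sets $\{v_i\} \subseteq \sqcup_{a=0}^{d} V_a$ and $\{w_j\} \subseteq \sqcup_{b=0}^{e} W_b$ as before, I would take an arbitrary element of $(V \otimes W)_n$, expand it as a finite sum of pure tensors, and expand each tensor factor in terms of the generators of $V$ and of $W$. This reduces the claim to showing that a single term of the form $(f_\as v_i) \otimes (g_\as w_j)$ — where $v_i \in V_a$ and $w_j \in W_b$ with $a \leq d$, $b \leq e$, and $f \colon [a] \hookrightarrow [n]$, $g \colon [b] \hookrightarrow [n]$ are injections — is the image, under a map induced by $V \otimes W$, of an element sitting in degree $\leq d+e$.

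The key point is that $f$ and $g$ both factor through the inclusion $\iota \colon U \hookrightarrow [n]$ of the \emph{union} $U := f([a]) \cup g([b])$ of their images, and that $|U| \leq a+b \leq d+e$. Writing $f = \iota \circ f'$ and $g = \iota \circ g'$ (after identifying $U$ with $[\,|U|\,]$) and applying functoriality together with the description of the diagonal action, the term in question equals
\[
(f_\as v_i) \otimes (g_\as w_j) = (V \otimes W)(\iota)\bigl((f'_\as v_i) \otimes (g'_\as w_j)\bigr),
\]
with $(f'_\as v_i) \otimes (g'_\as w_j) \in (V \otimes W)_{|U|}$ and $|U| \leq d+e$, as needed. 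Hence $\sqcup_{n=0}^{d+e}(V \otimes W)_n$ generates $V \otimes W$. Finally, since $V$ and $W$ are finitely generated, each $V_n$ and $W_n$ — hence each $(V \otimes W)_n$ — is finite-dimensional (a piece $V_n$ is spanned by the images of finitely many generators under the finite hom-sets $\Hom_\FI([m],[n])$), so a finite subset of $\sqcup_{n=0}^{d+e}(V \otimes W)_n$ already generates and $V \otimes W$ is in fact finitely generated in degree $\leq d+e$. I expect the factorization step to be the only subtle point: the content lies in recognizing that the relevant bound is the size of the union of the two images, not $n$ itself, after which everything is routine.
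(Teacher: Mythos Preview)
Your proof is correct. The paper does not actually prove this proposition: it is stated as a citation to \cite{CEF} and no argument is given in the body. That said, the key idea you use for part~(2) --- factoring both injections $f$ and $g$ through the inclusion of the union $U = f([a]) \cup g([b])$, whose size is at most $a+b \leq d+e$ --- is precisely the mechanism the paper later sketches in Remark~\ref{rvertexstab}, where it is described as the explicit content behind the tensor-product finite-generation statement. So your approach is the standard one and agrees with what the authors have in mind.
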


The following list of properties are proven throughout \cite{CEF}.\\

\begin{theorem}[Church, Ellenberg, and Farb, \cite{CEF}]\label{repstab}
Let $V$ be an $\FI$-module. If $V$ is finitely generated then for all $n \gg 0$ and all injections $f:[n] \rightarrow [n+1]$,
\begin{enumerate}
\item $f_\as$ is injective;
\item the vector space $V_{n+1}$ is spanned as an $\Sn_{n+1}$-representation by $f_\as(V_n)$;
\item the $\Sn_n$-representation $V_n$ admits a decomposition of the form
\[
V_n = \bigoplus_{\lambda, |\lambda| \leq d} m_\lambda V(\lambda)_n
\]
where the coefficient $m_\lambda$ is independent of $n$ and $d$ is some constant independent of $n$ (see \cite{CEF} for details on the representations $V(\lambda)$). In particular, the multiplicity of the trivial representation in $V_n$ is eventually independent of $n$.
\item if $V$ is $d$-small, then there exists a polynomial $p_V(X)\in \Q[X]$ of degree $\leq d$ such that for all $n \gg 0$, $p_V(n) = \dim_\Q V_n$.\label{poly}\\
\end{enumerate}
\end{theorem}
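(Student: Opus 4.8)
The plan is to reduce the whole statement to the ``free'' or representable $\FI$-modules $M(m):=\R[\Hom_\FI([m],-)]$, whose internal structure is completely explicit, and then to transport the conclusions along a surjection from a finite direct sum of such modules, using the Noetherian property to keep the relevant degrees under control. Recall that $M(m)_n$ is the permutation representation of $\Sn_n$ on the set of injections $[m]\hookrightarrow[n]$; as an $\Sn_n$-representation it is $\Ind_{\Sn_{n-m}}^{\Sn_n}\R=\Ind_{\Sn_{n-m}\times\Sn_m}^{\Sn_n}(\R\boxtimes\R[\Sn_m])$, so Pieri's rule identifies its irreducible constituents with the $V(\lambda)_n$ for $|\lambda|\le m$, each occurring with a multiplicity that becomes independent of $n$ once $n$ is large; moreover $\dim_\R M(m)_n=n(n-1)\cdots(n-m+1)$ is a polynomial in $n$ of degree $m$. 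Thus all four assertions hold for $M(m)$, hence for any finite direct sum of $M(m_i)$'s.

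Now fix a finitely generated $V$ and a surjection $P:=\bigoplus_i M(m_i)\twoheadrightarrow V$ with finitely many summands and all $m_i\le d$, where $d$ is the generation degree of $V$. Claim (2) is then immediate: for $n\ge d$ and any injection $f:[n]\hookrightarrow[n+1]$, every injection $[m_i]\hookrightarrow[n+1]$ has image of size $m_i\le n$, so it can be moved into the image of $f$ by a permutation of $[n+1]$; hence $P_{n+1}$, and therefore its quotient $V_{n+1}$, is spanned as an $\Sn_{n+1}$-representation by $f_\as(V_n)$. Claim (1), the eventual injectivity of the transition maps, is the subtle point; I would prove it via the torsion submodule $T(V)\subseteq V$, where $T(V)_n$ consists of those $v\in V_n$ with $g_\as(v)=0$ for every injection $g:[n]\hookrightarrow[m]$ with $m$ sufficiently large. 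One checks directly that $T(V)$ is a submodule; it is finitely generated by the Noetherian property, and since each of its finitely many generators is annihilated by all sufficiently high transition maps, $T(V)$ vanishes in all degrees $>N$ for some $N$. The quotient $V/T(V)$ is torsion-free, and for a torsion-free $\FI$-module every transition map is injective: if $f_\as(\bar v)=0$ for one injection $f:[n]\hookrightarrow[n+1]$, then post-composing with $\Sn_{n+1}$ shows $\bar v$ is killed by every such $f$, hence (by factoring through $[n+1]$) by every injection $[n]\hookrightarrow[m]$ with $m>n$, so $\bar v=0$. Since $V$ and $V/T(V)$ agree in degrees $>N$, claim (1) follows for $n\gg 0$.

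For claim (3) I would combine two ingredients. First, the representation stability of the $M(m_i)$ together with an explicit stable range, which is read off from the Pieri description above. Second, a quotient lemma: given the eventual injectivity and surjectivity established in (1) and (2), representation stability passes from $P=\bigoplus_i M(m_i)$ to its quotient $V$ with only a controlled shift of the stable range. Together with the weight bound $|\lambda|\le\max_i m_i$, which is inherited by subquotients from the $M(m_i)$ (again by Pieri), this yields the decomposition $V_n=\bigoplus_{|\lambda|\le d}m_\lambda V(\lambda)_n$ with $n$-independent coefficients $m_\lambda$. Finally, claim (4) follows formally: if $V$ is $d$-small it is a subquotient of a finite sum of $M(m_i)$'s with all $m_i\le d$, so $m_\lambda=0$ unless $|\lambda|\le d$, and $\dim_\R V_n=\sum_{|\lambda|\le d}m_\lambda\dim_\R V(\lambda)_n$; by the hook length formula each $\dim_\R V(\lambda)_n$ is a polynomial in $n$ of degree $|\lambda|$, hence of degree $\le d$, so the right-hand side eventually agrees with a polynomial in $\Q[n]$ of degree $\le d$.

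The main obstacle is the quantitative control of the stable range in claim (3): all the genuine work lies in bounding the degrees that can appear when one resolves $V$ by representable modules and in making the ensuing inductions terminate, and it is precisely here that the Noetherian property of $\FI$ (and, through the semisimplicity of $\R[\Sn_n]$, the characteristic-zero hypothesis) enters in an essential way. Everything else is bookkeeping with induced representations and Pieri's rule.
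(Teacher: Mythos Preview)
The paper does not prove this theorem at all: it is stated with attribution to \cite{CEF} and prefaced by ``The following list of properties are proven throughout \cite{CEF},'' so there is no in-paper argument to compare your proposal against.

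That said, your outline is essentially the strategy of \cite{CEF} itself. The reduction to the representable modules $M(m)$, the Pieri analysis of their decomposition, the torsion-submodule argument for eventual injectivity, and the deduction of polynomial dimension from the stable decomposition are all standard and correct. You are right to flag claim~(3) as the place where the genuine content lies: the ``quotient lemma'' you invoke---that representation stability passes from $P=\bigoplus_i M(m_i)$ to its quotient $V$ with controlled stable range---is not automatic, since one must also control the kernel $K$. In \cite{CEF} this is handled either via the shift functor and an inductive argument on a complexity invariant, or (in characteristic zero) via the $\FI\sharp$-module structure on the $M(m)$'s; either way the Noetherian property is used to ensure $K$ is itself finitely generated, so that the same machinery applies to it. Your sketch is an honest outline of the proof, with the difficulty correctly localized.
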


The above will be used extensively in what follows.\\

The notion of representation stability was first introduced by Church and Farb in their seminal work \cite{CF}. From these beginnings the field has seen a boom in the literature and has been proven to be applicable to a large collection of subjects. For the purposes of this paper, we state the following definition, which is a modernized version of the original definition of Church and Farb.\\

\begin{definition}\label{repstable}
Let $X_\dt$ denote a functor from $\FI$ to the category of topological spaces. Then we say that $X_\dt$ is \textbf{representation stable} if for all $i \geq 0$ the $\FI$-module over $\Z$
\[
H_i(X_\dt;\Z)
\]
is finitely generated.
\end{definition}

It was famously proven by Church \cite{Ch}, and later reexamined by Church, Ellenberg, and Farb \cite{CEF}, that if $M$ is a compact orientable manifold with boundary of dimension at least two
\[
n \mapsto \Conf_n(M)
\]
is representation stable. We have already seen, however, that an analogous statement cannot be true if we replace $M$ with a graph (see the discussion following Theorem \ref{eulerchar}). We therefore change our approach and instead consider the functors
\begin{eqnarray}
n \mapsto \Conf_m(G_n) \label{functor}
\end{eqnarray}
where $m$ is fixed, and $G_\dt$ is a particularly nice $\FI$-graph (see the statement of Theorem \ref{repstabconf}). The main theorem of this paper can be restated to say that in this case the functor (\ref{functor}) is representation stable. Our approach will be largely combinatorial, and we will use structural facts about $\FI$-graphs as well as the cellular model of Theorem \ref{cellularmodel}. This is in contrast to the work of L\"utgehetmann, which proves that $n \mapsto \Conf_m(G_n)$ is representation stable for a particular choice of $G_\dt$ (see Example \ref{lutex}) using very topological methods. We will find that our method provides a stronger bound on the degree of the polynomial encoding the Betti numbers in this case, while L\"utgehetmann's method provides bounds on the degree of generation of the $\FI$-modules $H_i(\Conf_m(G_\dt))$.\\

\section{FI-graphs}

\subsection{Definitions and examples} \label{def}

The primary objective of this section is to provide a framework through which one can study families of graphs in the spirit of Kneser graphs and their generalizations. Recall that, for any fixed integers $n \geq k$, one defines the Kneser graph $KG_{n,k}$ as the graph whose vertices are labeled by $k$-element subsets of $[n]$, and whose edges connect disjoint sets.\\

It is clear that for each $n$, elements of $\Sn_n$ act on $KG_{n,k}$ by graph automorphisms. What is perhaps more subtle, is that if $f:[n] \hookrightarrow [m]$ is any injection, then there is an induced map of graphs
\[
KG(f):KG_{n,k} \rightarrow KG_{m,k}
\]
Looking back at the definition of $\FI$-modules, one is therefore motivated to make the following definition.\\

\begin{definition}
An \textbf{$\FI$-graph} is a functor from the category $\FI$ to the category $\mathbf{Graph}$ of (simple) graphs. We will usually denote an $\FI$-graph by $G_\dt:\FI \rightarrow \mathbf{Graph}$. We will use $G(f)$ to denote the induced maps of $G_\dt$.\\
\end{definition}

While the above definition captures the core of the above discussion, it is still a bit too general for our purposes. For instance, if
\[
G_0 \subseteq G_1 \subseteq G_2 \subseteq \ldots
\]
is any chain of graphs, then we may define an $\FI$-graph by setting the $\Sn_n$-action to be trivial for each $n$, and having the transition maps be the given inclusions. An arbitrary chain of graphs like the above can become rather complicated, and there won't necessarily be any way to gather meaningful information above the invariants of any $G_n$ from those that came before it. What is needed is some notion of finite generation for $\FI$-graphs. For this purpose, we define the following.\\

\begin{definition}
Let $G_\dt$ be an $\FI$-graph. We say that $G_\dt$ is \textbf{vertex-stable with stable degree $\leq d$} if for all $n \geq d$, and every vertex $v \in V(G_{n+1})$ there exists some vertex $w \in V(G_n)$ and some injection $f:[n] \hookrightarrow [n+1]$ such that $G(f)(w) = v$.\\
\end{definition}

That is, an $\FI$-graph is vertex-stable with stable degree $\leq d$ if for each $n > d$, every vertex in $G_{n}$ is in the image of one of the transition maps. Informally, no `new' vertices appear after the graph $G_d$, up to symmetric group actions. \\

We will find that this fairly simple combinatorial condition is sufficient to prove a plethora of facts about the graphs $G_n$. Before we delve into these details, we first introduce the various examples which motivated this paper. In most of these examples, vertices are labelled by elements of $[n] = \{1,2,\dots,n\}$ or by ordered or unordered tuples of such elements. The symmetric group $\Sn_n$ acts on such vertices by acting on each element individually.\\

\begin{example}\label{kneserex}
For any fixed $k \geq 0$, the Kneser graphs $KG_{\dt,k}$ form a vertex-stable $\FI$-graph with stable degree $k$ (or stable degree $1$ if $k = 0$). The same can therefore be said about the complete graphs $K_\dt = KG_{\dt,1}$.\\

More generally, if $n,k,r$ are fixed integers, then we define the generalized Kneser graph $KG_{n,k,r}$ to have vertices labeled by subsets of $[n]$ of size $k$, edges connecting subsets whose intersection has size $\leq r$. In particular, $KG_{n,k} = KG_{n,k,0}$. The generalized Kneser graphs $KG_{\dt,k,r}$ form a vertex-stable $\FI$-graph for each fixed $k$ and $r$, again with stable degree $k$.\\
\end{example}

\begin{example}
For any fixed $k \geq 0$, we can define a variant of the Kneser graph, which we denote $KG_{n,\leq k}$. The vertices of $KG_{n,\leq k}$ will be labeled by subsets of $[n]$ of size at most $k$, and the edges will connect disjoint subgraphs, just as was the case with the Kneser graph. Because self-loops are forbidden, we do not connect the empty set to itself.\\

Note that, for each $n$, the symmetric group action on $KG_{n,\leq k}$ is not transitive. Despite this, the collection $KG_{\dt,\leq k}$ still form a vertex-stable $\FI$-graph with stable degree $k$. It will be useful to consider the orbits of vertices under the symmetric group actions. Our examples tend to have few orbits for the sake of being simple examples, but this is not a restriction on general $\FI$-graphs. \\
\end{example}

\begin{example}
For any fixed $k \geq 0$, the complete bipartite graphs $K_{\dt,k}$ form a vertex-stable $\FI$-graph with stable degree $1$. Here, our transition maps and permutations fix the vertices in the part of size $k$. It follows that the series of star graphs, Star$_\dt = K_{\dt,1}$ form a vertex-stable $\FI$-graph.\\
\end{example}

\begin{example}
For any fixed $n,k \geq 0$, define the Johnson graph $J_{n,k}$ as that whose vertices are labeled by subsets of $[n]$ with size $k$, and whose edges connect subsets with intersection size $k-1$. Then $J_{\dt,k}$ naturally forms a vertex-stable $\FI$-graph with stable degree $k$.\\
\end{example}

\begin{example}\label{latticeex}
Recall that the $n$-cube graph $Q_n$ is defined to be the 1-skeleton of the $n$-dimensional hypercube. This collection cannot be endowed with the structure of a vertex-stable $\FI$-graph, as its number of vertices grows too fast (see Theorem \ref{degreegrow}). There is, however, a variation of the $n$-cube graph which can be endowed with the structure of a finitely generated $\FI$-graph.\\

For fixed $n,k \geq 0$, let $Q_{n,k}$ denote the graph whose vertices are ordered $k$-tuples of elements of $[n]$, where two vertices are connected if they differ in only one coordinate. This graph is sometimes called the $k$-lattice graph of characteristic $n$. The cubic lattice graph of characteristic $n$ is notable in that it can be entirely characterized by certain simple combinatorial properties (see \cite{La}). For our purposes, we simply note that for any fixed $k$ the family $Q_{\dt,k}$ can be endowed with the structure of a vertex-stable $\FI$-graph. Indeed, let $n > k$, and let $(i_1,\ldots,i_k)$ be a vertex of $Q_{n+1,k}$. Because $k < n$, we know that there is some integer $l \in [n]$ such that $l \neq i_j$ for any $j$. Then $(i_1,\ldots,i_k)$ is in the image of the transition map induced by the injection $f:[n] \hookrightarrow [n+1]$ given by,
\[
f(x) = \begin{cases} n+1 &\text{ if $x = l$,}\\ x &\text{ otherwise.}\end{cases}
\]
This $\FI$-graph has stable degree $k$. 
\end{example}

\begin{example}\label{lutex}
Our next example appears in earlier work of L\"utgehetmann \cite{Lu}. Let $G,H$ be any pair of pointed graphs. Then we can construct a new graph by wedging $G$ with $H$ $n$-times,
\[
G_n := G \bigvee H^{\vee n}
\]
Then we may endow $G_n$ with the structure of an $\FI$-graph by having the symmetric group act by permuting the factors of $H$. This $\FI$-graph has stable degree $1$.\\
\end{example}

The examples thus far have been quite regular, in the sense that for each $n$, the construction of the vertices and edges of the graph $G_n$ has been the same. It is worth examining how this can be varied, particularly because results later in this section will limit how wild such variation can be.

\begin{example}\label{zeroedgeex}
Let $G_\dt$ be an $\FI$-graph, and modify it by removing all edges from each $G_i$, for $i = 1$ to $k-1$.
\end{example}

\begin{example}\label{zerographex}
Let $G_\dt$ be an $\FI$-graph, and modify it by replacing each $G_i$ by the empty graph, for $i = 1$ to $k-1$.
\end{example}

While Examples \ref{zeroedgeex} and \ref{zerographex} remove vertices and edges from graphs in the first few degrees, this cannot necessarily be done in later degrees. The transition maps are permitted to map pairs of vertices not connected by an edge to pairs of vertices connected by an edge, but not the reverse. Two vertices joined by an edge may not map to the same vertex, because there cannot be an edge from this vertex to itself. For instance, if $G_n$ contains a complete graph on $k$ vertices then $G_{n+1}$ also contains a complete graph on $k$ vertices.\\

Disjoint unions of $\FI$-graphs are $\FI$-graphs, and it is possible to increase the number of copies from a certain point onwards.

\begin{example}\label{duplicateex}
Let $G_\dt$ be any $\FI$-graph. Fix a positive integer $k$, and create a new $\FI$-graph $H_\dt$ as follows. For $i < k$, the graph $H_i$ is equal to $G_i$. For $i \geq k$, the graph $H_i$ is a disjoint union of two copies of $G_i$. For concreteness, color vertices and edges in one of these subgraphs red and in the other, blue. The action of $\Sn_n$ preserves the color of vertices. Transition maps preserve the color of vertices and take uncolored vertices to red vertices. This $\FI$-graph has stable degree $k$.
\end{example}

Example \ref{duplicateex} did not need the two graphs to be the same --- the new graphs introduced from degree $k$ could have been the respective components of any $\FI$-graph.\\

It is also possible to decrease the number of components. This does require the use of transition maps which are not injective.

\begin{example}\label{collideex}
Let $G_\dt$ be any $\FI$-graph. Fix a positive integer $k$, and create a new $\FI$-graph $H_\dt$ as follows. For $i < k$, the graph $H_i$ is a disjoint union of two copies of $G_i$. Color vertices and edges in one of these subgraphs red and in the other, blue. For $i \geq k$, the graph $H_i$ is equal to $G_i$. The action of $\Sn_n$ preserves the color of vertices. Transition maps preserve the color of vertices if their image is in $G_i$ with $i < k$, and forget colors otherwise.
\end{example}

An $\FI$-graph may be modified by changing the times at which the various `types' of edges begin to appear, as in the following variant of the Kneser graph.

\begin{example}\label{edgeorbitex}
Let the vertex set of $G_n$ be indexed by $r$-tuples of elements of $[n]$, and let $a_0$ to $a_r$ be $r+1$ fixed positive integers. In $G_n$, there is an edge between two vertices which share exactly $k$ elements if and only if $n \geq a_k$. 
\end{example}

Example \ref{edgeorbitex} could be generalized further by taking the vertices to be ordered $r$-tuples, or tuples with only limited ordering information, in which case there would be more edge orbits. For instance, if $r=2$, then there are five orbits of edges rather than three in the unordered case --- between pairs of vertices $((a,b),(a,c))$,$((a,b),(c,b))$,$((a,b),(b,c))$,$((a,b),(c,a))$, and $((a,b),(c,d))$.\\

The next example fails to be an $\FI$-graph in a subtle way. If it was an $\FI$-graph, it would violate Theorem \ref{inducedtheorem}.

\begin{example}\label{orbitmergenonex}
For each $i \neq 2$, let $G_i$ be the complete graph on the vertex set $[i]$, with the natural symmetric group action. Let $G_2$ have vertex set $\{1,2,3\}$, with edges $13$ and $23$. Transition maps from $G_n$ to $G_{n+1}$ are the identity map postcomposed with any element of $\Sn_{n+1}$.\\

However, for $G_\dt$ to be an $\FI$-graph, injections from $[2]$ to $[n]$ for $n > 3$ must induce maps from $G_2$ to $G_n$, and it is here that this construction fails --- a map from $[2]$ to $[n]$ can't specify where each of the three vertices of $G_2$ is sent. This is perhaps a surprising failure, because transition maps from $G_n$ to $G_{n+1}$ can be defined properly, and it is only longer-range maps which fail.\\

If rather than $\FI$ we were working over a category where maps from $[2]$ to $[n]$ were instead a sequence of maps from $[2]$ to $[3]$ to $[4]$ and so on, then this construction would not fail in this way, and so over this category, the analogue of Theorem \ref{inducedtheorem} is false.
\end{example}

\subsection{Vertex-stability and its consequences}

While it is clearly the case that the examples of Section \ref{def} are vertex-stable, one might also note that these cases seem to have much more structure than this. For instance, it is natural to go a step further and make the following definitions:\\
\begin{definition}~
\begin{enumerate}
\item an $\FI$-graph is \textbf{eventually injective} if for $n \gg 0$, the transition maps of $G_\dt$ are injective;\\
\item an $\FI$-graph is \textbf{eventually induced} if for $n \gg 0$, the image of any transition map is an induced subgraph;\\
\item an $\FI$-graph is \textbf{edge-stable} with edge-stable degree $\leq k$ if for $n \geq k$ and any $\{x,y\} \in E(G_{n})$ there is an edge $\{v,w\} \in V(G_k)$ and an injection $f:[k] \hookrightarrow [n]$ such that $G(f)(v) = x$ and $G(f)(w) = y$;\\
\item an $\FI$-graph is \textbf{$r$-vertex-stable} if for all $n \gg 0$, and any collection of $r$ vertices of $V(G_{n+1})$, $\{x_1,\ldots,x_r\}$, there is a collection of vertices of $G_n$, $\{v_1,\ldots,v_r\}$, and an injection $f:[n] \hookrightarrow [n+1]$, such that $G(f)(v_i) = x_i$ for each $i$.\\
\end{enumerate}
\end{definition}

These stability properties may occur at quite different times, and at different times to vertex stability. Example \ref{collideex} is injective only from degree $k$ onwards, Example\ref{duplicateex} is vertex-stable and edge-stable from degree $k$ onwards, and Example \ref{edgeorbitex} is vertex-stable in degree $r$, but edge-stable only once the degree is greater than all of $a_0$ through $a_r$. \\

The Kneser graphs $KG_{\dt,k}$ (Example \ref{kneserex}) are vertex-stable in degree $k$, edge-stable in degree $2k$, and $r$--vertex stable in degree $rk$. In contrast, the lattice graphs $Q_{\dt,k}$ (Example \ref{latticeex}) are vertex-stable in degree $k$, edge-stable in degree $k+1$, and $r$--vertex stable in degree $rk$. \\

It is left to the reader to verify that all of the examples of the previous section satisfy each of the above conditions. Somewhat miraculously, it turns out that this is not a coincidence.\\

\begin{theorem}\label{fgprops}
Let $G_\dt$ be a vertex-stable $\FI$-graph. Then:
\begin{enumerate}
\item $G_\dt$ is $r$-vertex stable for all $r \geq 1$;
\item $G_\dt$ is edge-stable;
\item $G_\dt$ is eventually injective and induced.\\
\end{enumerate}
\end{theorem}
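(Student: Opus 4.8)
The plan is to phrase each statement as a finite-generation assertion about an auxiliary $\FI$-module built from $G_\dt$, and then invoke the Noetherian property of finitely generated $\FI$-modules (Theorem from the background section, attributed to Snowden and to Church--Ellenberg--Farb). The basic gadget is the \emph{vertex module} $\R V(G_\dt)$, the $\FI$-module $n \mapsto \R V(G_n)$ with the linear maps induced by the transition maps; vertex-stability of stable degree $\leq d$ says exactly that $\R V(G_\dt)$ is generated in degrees $\leq d$ (choose a set of orbit representatives for the vertices of $G_0,\dots,G_d$). Dually, one should also keep track of the $\FI$-module freely spanned by $V(G_n)$, which surjects onto $\R V(G_\dt)$ and is visibly finitely generated; this will let us control kernels of transition maps and hence injectivity.

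First I would prove part (1), $r$-vertex-stability, since parts (2) and (3) will follow from it (taking $r=2$ and looking at which pairs carry edges, resp. $r=2$ and comparing to the edge set). Fix $r$ and consider the $\FI$-module $W^{(r)}_\dt$ with $W^{(r)}_n = \R\big(V(G_n)^r\big)$, i.e. the free vector space on ordered $r$-tuples of vertices of $G_n$, with the evident induced maps. There is a natural surjection of $\FI$-modules $\big(\R V(G_\dt)\big)^{\otimes r} \twoheadrightarrow W^{(r)}_\dt$, and by Proposition \ref{tensorgen} the left-hand side is finitely generated (in degree $\leq rd$), hence so is $W^{(r)}_\dt$. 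Unwinding the definition of finite generation, this says precisely that for $n \gg 0$ every ordered $r$-tuple of vertices of $G_{n+1}$ lies in the image, under some transition map, of an $r$-tuple of vertices of $G_n$ — which is the assertion of $r$-vertex-stability (the unordered version is immediate). For part (2), edge-stability, apply the same reasoning to the submodule $E_\dt \subseteq W^{(2)}_\dt$ spanned by those ordered pairs $(x,y)$ with $\{x,y\} \in E(G_n)$; by Noetherianity $E_\dt$ is finitely generated, say in degrees $\leq k$, and this is exactly edge-stability with degree $\leq k$.

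For part (3), eventual injectivity of the transition maps is the statement that the maps $f_\as$ of $\R V(G_\dt)$ are eventually injective, which is immediate from the representation-stability package (Theorem \ref{repstab}(1)) applied to the finitely generated $\FI$-module $\R V(G_\dt)$ — strictly speaking that theorem gives injectivity of $f_\as$ on the free vector space, and since vertices form a basis, injectivity of the linear map is the same as injectivity of the underlying map of vertex sets. For "eventually induced": we must show that for $n \gg 0$, if $v,w$ are vertices of $G_n$ whose images under a transition map $G(f)$ span an edge of $G_{n+1}$ (more generally of $G_m$), then $\{v,w\}$ was already an edge of $G_n$. Suppose not; then $\{v,w\}$ is a "non-edge" that becomes an edge. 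I would package the non-edges into an $\FI$-submodule $N_\dt \subseteq W^{(2)}_\dt$ — or rather track the complementary decomposition $W^{(2)}_\dt \cong E_\dt \oplus N_\dt$ of the pair-module into edge-pairs and non-edge-pairs (valid once transition maps are injective, so that the pair type is at least well-defined on honest pairs) — and observe that a transition map sending a non-edge to an edge would violate $\FI$-functoriality of this decomposition for large $n$, since a finitely generated $\FI$-module cannot have a transition map that fails to respect an eventual direct-sum splitting coming from finitely many orbits of generators.

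The main obstacle I anticipate is exactly this last point: making "eventually induced" into a clean finite-generation statement. The subtlety (illustrated by Example \ref{orbitmergenonex}) is that edge-ness is not a priori preserved or reflected by transition maps, so one cannot naively split $W^{(2)}_\dt$ as an $\FI$-module. The right fix is probably to argue at the level of orbits: by $2$-vertex-stability and eventual injectivity, for $n \gg 0$ the $\Sn_n$-orbits of ordered pairs of distinct vertices of $G_n$ are in stable bijection with those of $G_{n+1}$ (this is again finite generation of $W^{(2)}_\dt$ together with the surjectivity/injectivity already established), and each such orbit is entirely contained in $E_n$ or entirely in its complement; then one checks that the "edge" orbits of $G_{n+1}$ pull back along transition maps to the "edge" orbits of $G_n$, for $n$ past the generation degree of $E_\dt$. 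I would expect the cleanest writeup to introduce a small lemma saying that for a finitely generated $\FI$-module $V$ with a distinguished $\FI$-stable "basis" (permutation $\FI$-module), any $\FI$-submodule spanned by a union of basis orbits is eventually a direct summand with complement also basis-orbit-spanned, and then apply it to $V = W^{(2)}_\dt$ and the edge-submodule. With that lemma in hand, all three parts reduce to bookkeeping with Proposition \ref{tensorgen} and the Noetherian theorem.
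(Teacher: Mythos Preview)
Your proposal is correct and follows essentially the same route as the paper: reduce each part to finite generation of an auxiliary $\FI$-module built from tensor powers of $\R V(G_\dt)$ (the paper uses $\Sym^r$ and $\R\binom{V(G_\dt)}{r}$ rather than your ordered $W^{(r)}_\dt$, but this is cosmetic), invoke Noetherianity for the edge submodule, and use Theorem \ref{repstab}(1) for eventual injectivity. For ``eventually induced'' the paper arrives at exactly your orbit argument but more directly than via your proposed lemma: it observes that the number of $\Sn_n$-orbits on pairs equals the multiplicity of the trivial representation in $\R\binom{V(G_\dt)}{2}$, which is eventually constant by Theorem \ref{repstab}(3), and since edge-orbits always map to edge-orbits while the total orbit count is fixed, non-edge-orbits must eventually map to non-edge-orbits.
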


It is worth noting that vertex stability is strictly stronger than edge stability, as shown by the following example.\\

\begin{example}
For each $n$, let $G_n$ be the union of the complete graph $K_n$ and $n$ isolated vertices. The symmetric group $\Sn_n$ acts naturally on the complete graph and fixes each of the other vertices. This $\FI$-graph is edge-stable in degree 2, but is not vertex-stable.\\
\end{example}

Edge stability may happen either before or after $2$-vertex stability, because edge stability includes only pairs of vertices which are connected by edges, but it is possible for edges to not appear until long after any pair of vertices are contained in the image of some transition map. Example \ref{edgeorbitex} is $2$-vertex stable in degree $2r$, but is not edge-stable until the degree equal to the maximum of the $a_i$. \\

Before we prove Theorem \ref{fgprops}, it will be useful to us to rephrase the above properties in terms of finite generation of certain $\FI$-modules.\\

\begin{definition}
Let $G_\dt$ denote an $\FI$-graph, and let $r \geq 1$ be fixed. We write
\[
\R\binom{V(G_\dt)}{r}
\]
to denote the $\FI$-module whose evaluation at $[n]$ is the $\R$ vector space with basis indexed by collections of $r$ vertices of $G_n$. We will often write $\R V(G_\dt) := \R\binom{V(G_\dt)}{1}$. Note that the image of a collection of $r$ vertices under a transition map may not be a collection of $r$ vertices if this transition map is not injective on vertices. In this case we simply declare the map to be zero on this collection. Similarly, we define $\R E(G_\dt)$ to be the $\FI$-module whose evaluation at $[n]$ is the $\R$ vector space with basis indexed by the edges of $E(G_n)$.\\
\end{definition}

\begin{lemma}\label{fimodchar}
Let $G_\dt$ be an $\FI$-graph.
\begin{enumerate}
\item $G_\dt$ is vertex-stable with stable degree $\leq d$ if and only if $\R V(G_\dt)$ is finitely generated in degree $\leq d$.
\item $G_\dt$ is eventually injective if and only if the transition maps of $\R V(G_\dt)$ are eventually injective.
\item $G_\dt$ is edge-stable with edge-stable degree $\leq d$ if and only if $\R E(G_\dt)$ is finitely generated in degree $\leq d$. 
\item $G_{\dt}$ is $r$-vertex-stable if and only if $\R\binom{V(G_\dt)}{r}$ is finitely generated.\\
\end{enumerate}
\end{lemma}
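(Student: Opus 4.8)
The plan is to prove all four equivalences by the same mechanism: translate each graph-theoretic stability condition into the statement that a suitable transition map carries a basis vector of the associated $\FI$-module to a basis vector (as opposed to $0$), and then feed this into the standard description of finite generation. The one auxiliary fact I will use throughout is that each graph $G_n$ has finitely many vertices, hence finitely many edges and finitely many $r$-element subsets of vertices, so each of $\R V(G_\dt)$, $\R E(G_\dt)$, and $\R\binom{V(G_\dt)}{r}$ takes finite-dimensional values. Consequently ``finitely generated in degree $\leq d$'' is the same as ``generated in degree $\leq d$'', and --- by factoring an arbitrary injection $[m]\hookrightarrow[n]$ through the standard chain $[m]\hookrightarrow[m+1]\hookrightarrow\cdots\hookrightarrow[n]$ --- this is equivalent to the condition that for every $n>d$ the degree-$n$ space is spanned by the images of the degree-$(n-1)$ space under transition maps. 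Likewise, ``finitely generated'' (with no degree bound) is equivalent to the existence of an $N$ such that the same spanning condition holds for all $n\geq N$.

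For part (1), a transition map $G(f)\colon G_n\to G_{n+1}$ is, on vertices, just a map of sets, so the induced linear map sends each basis vector of $(\R V(G_\dt))_n$ to a basis vector of $(\R V(G_\dt))_{n+1}$; hence $(\R V(G_\dt))_{n+1}$ is spanned by the images of $(\R V(G_\dt))_n$ precisely when every vertex of $G_{n+1}$ lies in the image of some transition map out of $G_n$, which is the defining condition of vertex-stability. Combining this with the generation criterion above gives (1). Part (2) is immediate from the same observation: on vertex bases, the linear map induced by $G(f)$ is injective if and only if $G(f)$ is injective on vertices, which for simple graphs is the same as $G(f)$ being injective as a graph homomorphism; since every injection of sets factors through the standard chain, ``eventually injective'' means the same thing for $G_\dt$ and for $\R V(G_\dt)$.

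Parts (3) and (4) proceed identically, except that one must now invoke the convention that a transition map failing to be injective on a given edge (resp. $r$-subset of vertices) acts by $0$ on the corresponding basis vector. With this convention, the map induced by $G(f)$ on $\R E(G_\dt)$ (resp. on $\R\binom{V(G_\dt)}{r}$) sends each basis vector either to a basis vector or to $0$, and it sends an edge $\{v,w\}$ (resp. a subset $\{v_1,\dots,v_r\}$) to a basis vector exactly when $G(f)$ is injective on that edge (resp. that subset), in which case the image is the edge $\{G(f)(v),G(f)(w)\}$ (resp. the subset $\{G(f)(v_1),\dots,G(f)(v_r)\}$). Hence the degree-$n$ space is spanned by the images of the degree-$(n-1)$ space if and only if every edge of $G_n$ (resp. every $r$-subset of $V(G_n)$) is obtained, via a transition map injective on it, from an edge of $G_{n-1}$ (resp. an $r$-subset of $V(G_{n-1})$), which is exactly edge-stability (resp. $r$-vertex-stability). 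The generation criteria of the first paragraph then yield (3) and (4).

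I do not expect a genuine obstacle here, as the argument is essentially an unwinding of definitions; the only points demanding care are keeping the index conventions consistent (the inequalities ``$n\geq d$'' appearing in the stability definitions versus ``$n>d$'' in the generation criterion, and the role of the implicit constant in ``$n\gg 0$''), and ensuring that non-injective transition maps are handled uniformly through the zero-convention, so that the ``basis vector $\mapsto$ basis vector or $0$'' dichotomy genuinely matches the existential statements in the definitions of edge- and $r$-vertex-stability.
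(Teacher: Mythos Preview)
Your proposal is correct and matches the paper's approach: the paper does not actually write out a proof of this lemma, treating it as an immediate unwinding of definitions, and only appends a remark that the statement is ``critically dependent on the assumption that $G_n$ has finitely many vertices and edges for each $n$''---precisely the finiteness observation you isolate and use. Your argument makes explicit what the paper leaves implicit, including the factoring-through-the-chain reduction and the zero-convention bookkeeping; one minor note is that for $\R E(G_\dt)$ the zero case never actually arises, since the paper's definition of graph homomorphism forbids collapsing an edge to a single vertex (there are no self-loops), so transition maps always send edges to edges.
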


\begin{remark}
Note that this lemma is critically dependent on the assumption that $G_n$ has finitely many vertices and edges for each $n$. For instance, consider the collection of infinite graphs
\[
V(G_n) := \N,\hspace{1cm} E(G_n) := \{\{1,2\},\{2,3\},\ldots, \{n-1,n\}\}
\]
We can introduce an $\FI$-structure on $G_\dt$ by having the symmetric group act trivially. Then it is clear that $\R V(G_n)$ is not finitely generated, despite $G_\dt$ being ``vertex-stable'' in some sense. Also note that the collection $G_\dt$ is not edge-stable in this case, seemingly violating Theorem \ref{fgprops}.\\
\end{remark}

This lemma is the key piece in the proof of Theorem \ref{fgprops}.\\

\begin{proof}[Proof of Theorem \ref{fgprops}]
To begin, Lemma \ref{fimodchar} implies that we must show that $\R\binom{V(G_\dt)}{r}$ is finitely generated. We note that there is a surjection of $\FI$-modules
\[
\Sym^r(\R V(G_\dt)) \rightarrow \R\binom{V(G_\dt)}{r} 
\]
Indeed, this is induced by the assignments
\[
x_1 \otimes \ldots \otimes x_r \mapsto \begin{cases} \{x_1,\ldots,x_r\} & \text{ if $x_i \neq x_j$ for $i \neq j$}\\ 0 & \text{ otherwise.}\end{cases}
\]
By assumption $\R V(G_\dt)$ is finitely generated, whence the same is true of $(\R V(G_\dt))^{\otimes r}$ by Proposition \ref{tensorgen}. The symmetric power $\Sym^r(\R V(G_n))$ is a quotient of $(\R V(G_\dt))^{\otimes r}$, and is therefore finitely generated as well. This concludes the proof.\\

The second statement follows from the Noetherian property as well as the inclusion
\[
\R E(G_\dt) \hookrightarrow \R\binom{V(G_\dt)}{2}.
\]

Eventual injectivity follows from Theorem \ref{repstab}.\\

By definition, $G_\dt$ is eventually induced if and only if for any pair of vertices of $G_n$, $\{x,y\} \notin E(G_n)$, and any injection $f:[n] \hookrightarrow [n+1]$, $\{f_\as(x),f_\as(y)\} \notin E(G_{n+1})$. For each $n$, let $\Ob_n$ denote the set of $\Sn_n$-orbits of pairs of vertices in $G_n$. Note that $\Ob_n$ may be partitioned into two subsets, depending on whether or not pairs in the orbit correspond to edges or not. Further note that the transition maps of $G_\dt$ will send an ``edge'' orbit to an edge orbit. On the other hand, the third part of Theorem \ref{repstab} implies that $|\Ob_n|$ is eventually independent of $n$, as it is equal to the multiplicity of the trivial representation in $\R\binom{V(G_n)}{2}$. This shows that non-edged orbits will eventually map exclusively into non-edged orbits, as desired.\\
\end{proof}

\begin{remark}\label{small}
Proposition \ref{tensorgen}, and the above proof, together imply that $\R\binom{V(G_\dt)}{r}$ is generated in degree $\leq r d$, where $d$ is the generating degree of $\R V(G_\dt)$. In particular, $\R E(G_\dt)$ is $2d$-small.\\
\end{remark}

\begin{remark}\label{rvertexstab}
It is possible to prove one part of Theorem \ref{fgprops} directly. Consider any set of $k$ vertices $v_1$ through $v_k$ in $G_n$, for $n \geq kr$. Each $v_i$ is in the image of a transition map from $G_k$ to $G_n$, and each of these transition maps is induced by an injection from $[k]$ to $[n]$. Let $f_1$ through $f_k$ be these injections. Take $f$ to be an injection from $[kr]$ to $n$ whose image includes the image of each $f_i$. Then each $f_i$ factors through $f$, so each $v_i$ is in the image of the transition map induced by $f$. This completes the proof.\\

The proof of $r$--vertex stability in Theorem \ref{fgprops} relies on the tensor product of finitely generated $\FI$-modules being finitely generated. The proof of this fact may be made explicit, and this is what lies behind the proof given above.\\
\end{remark}

Theorem \ref{fgprops} gives us a method for constructing vertex-stable $\FI$-graphs from given vertex-stable $\FI$-graphs.\\

\begin{definition}
Let $G$ be a graph. The \textbf{line graph} of $G$, $L(G)$, is the graph whose vertices are labeled by the edges of $G$ such that two vertices are connected if and only if the corresponding edges of $G$ share an end point.\\
\end{definition}

Line graphs have been studied extensively. One avenue of research is the question of how much of the graph $G$ can be determined by studying its line graph. A celebrated theorem of Whitney \cite{W} implies that the line graph almost always uniquely determines the original graph. Indeed, the only exception to this is the fact that $L(K_3) = L(K_{3,1})$. Algebraically, one is also interested in the question of deciding when a line graph is determined by its spectrum (See, for instance, \cite{H} or Chapter 1.3 of \cite{CRS}).\\

\begin{corollary}
Let $G_\dt$ denote a vertex-stable $\FI$-graph. Then the collection of line graphs $L(G_\dt)$ can be endowed with the structure of a vertex-stable $\FI$-graph.\\
\end{corollary}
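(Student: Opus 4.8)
The plan is to realize the line graph construction as a functor and then apply the structural results already established. First I would observe that the assignment $G \mapsto L(G)$ is functorial with respect to injective graph homomorphisms with induced image: if $\phi: G \hookrightarrow G'$ is an injection whose image is an induced subgraph, then $\phi$ carries edges of $G$ injectively to edges of $G'$, and two edges of $G$ share an endpoint if and only if their images do (the ``only if'' direction uses that $\phi$ is injective on vertices, and the ``if'' direction uses that the image is induced, so no new adjacencies are created). Thus such a $\phi$ induces an injective graph homomorphism $L(\phi): L(G) \hookrightarrow L(G')$. By Theorem \ref{fgprops}(3), the transition maps $G(f)$ of a vertex-stable $\FI$-graph $G_\dt$ are eventually injective with induced image, so for $n \gg 0$ the maps $L(G(f))$ are well-defined graph homomorphisms and the assignment $n \mapsto L(G_n)$ is functorial.

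Next I would address the finitely many small values of $n$ where the transition maps of $G_\dt$ may fail to be injective or induced. Since all of the relevant conclusions are asymptotic, I would invoke the standard device (used already in the remark following Theorem \ref{repstabconf}): replace $G_\dt$ by an $\FI$-graph which agrees with it for $n \gg 0$ but whose transition maps are injective and induced for all $n$ — for instance, by truncating and re-freely-generating the low-degree part. Concretely, one sets $L(G_\dt)_n$ to be $L(G_N)$ freely generated up through the stabilization point $N$, glued to the genuine $L(G_n)$ for $n \geq N$; this is routine and I would not belabor it. After this reduction we have an honest functor $L(G_\dt): \FI \to \mathbf{Graph}$.

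Finally I would check vertex-stability. The vertex set of $L(G_n)$ is $E(G_n)$, and the transition maps of $L(G_\dt)$ act on vertices exactly as the transition maps of $G_\dt$ act on edges. By Theorem \ref{fgprops}(2), $G_\dt$ is edge-stable, meaning precisely that for $n \gg 0$ every edge of $G_{n+1}$ lies in the image of a transition map applied to an edge of some fixed $G_k$; equivalently, in the language of Lemma \ref{fimodchar}(3), $\R E(G_\dt)$ is finitely generated. Since $\R E(G_\dt) = \R V(L(G_\dt))$ as $\FI$-modules, Lemma \ref{fimodchar}(1) gives that $L(G_\dt)$ is vertex-stable, with stable degree bounded by the edge-stable degree of $G_\dt$ (which, by Remark \ref{small}, is at most $2d$ where $d$ is the stable degree of $G_\dt$).

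The main obstacle is the functoriality check in the first paragraph: one must be careful that $L$ is \emph{not} functorial on all of $\mathbf{Graph}$ — a non-injective or non-induced morphism can collapse edges or create spurious adjacencies between their images — so the argument genuinely depends on having first passed to the regime where Theorem \ref{fgprops}(3) applies. Once that subtlety is handled correctly, the rest is a direct translation through Lemma \ref{fimodchar}.
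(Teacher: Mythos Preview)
Your proof is correct and follows the same route as the paper's one-line argument: invoke Theorem \ref{fgprops} so that edge-stability of $G_\dt$ gives finite generation of $\R E(G_\dt) = \R V(L(G_\dt))$, after arranging functoriality by passing to the eventually-injective regime. One small inaccuracy worth flagging: the ``if'' direction in your first paragraph follows from injectivity alone, not from the image being induced --- if $\phi$ is injective on vertices and $\phi(e_1),\phi(e_2)$ share an endpoint $\phi(v_1)=\phi(v_2)$, then $v_1=v_2$ already lies in $e_1\cap e_2$ --- so the induced hypothesis is doing no work in your functoriality check, though this does not affect the validity of the argument.
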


\begin{proof}
This result follows immediately from Theorem \ref{fgprops} and the definition of the line graph.\\
\end{proof}

\begin{remark}
We note that we the line graph $L(K_n)$ is isomorphic to the Johnson graph $J_{n,2}$. The line graphs of the complete bipartite graphs $K_{n,m}$ have been studied (see, for instance, \cite{M} or the references in \cite{CRS}), and are sometimes referred to as the \textbf{rook graphs}, as they can be thought of as encoding legal rook moves on an $m \times n$ chess board.\\
\end{remark}

\subsection{Determining when the induced property begins}

Theorem \ref{fgprops} implies that all $\FI$-graphs are eventually induced. In this section we consider the question of bounding when this behavior begins. To begin we impose the following technical condition on the $\FI$-graph $G_\dt$. We will see this condition return again when we consider configuration spaces of graphs.\\

\begin{definition}
We say an $\FI$-graph $G_\dt$ is \textbf{torsion-free} if for all injections $f:[n] \hookrightarrow [m]$ the transition map $G(f)$ is injective.\\
\end{definition}

Most of the examples in Section \ref{def} are torsion-free. Example \ref{collideex} is not torsion-free.

\begin{remark}
We say an $\FI$-module is \textbf{torsion-free} if all of its transition maps are injective. The above definition is intended to emulate this.\\
\end{remark}

Theorem \ref{fgprops} insists that vertex-stability implies edge stability. In particular, at some point the transition maps of a vertex-stable $\FI$-graph will contain every edge in the union of their respective images. It is therefore natural for one to guess that it will be at this point that the image of these transition maps must be induced. We do indeed find this to be the case for torsion-free $\FI$-modules.\\

\begin{theorem}\label{inducedtheorem}
Let $G_\dt$ be a torsion-free vertex-stable $\FI$-graph with edge stable degree $\leq d_E$. Then for any $n \geq d_E$ and any injection $f:[n] \hookrightarrow [n+1]$ the image of the transition map $G(f)$ is an induced subgraph of $G_{n+1}$.\\
\end{theorem}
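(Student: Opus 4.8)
The plan is to argue by contradiction, exploiting the torsion-free hypothesis to lift a hypothetical ``induced but not present'' edge backward along transition maps. Suppose $n \geq d_E$, fix an injection $f:[n]\hookrightarrow[n+1]$, and suppose the image of $G(f)$ fails to be induced. Then there exist vertices $x,y\in V(G_n)$ with $\{x,y\}\notin E(G_n)$ but $\{G(f)(x),G(f)(y)\}\in E(G_{n+1})$. Since $G(f)$ is injective (torsion-freeness), $G(f)(x)\neq G(f)(y)$, so this is a genuine edge of $G_{n+1}$. The idea is to now use edge-stability at degree $d_E$: the edge $e := \{G(f)(x),G(f)(y)\}$ of $G_{n+1}$ is the image of some edge $\{v,w\}\in E(G_{d_E})$ under a transition map $G(g)$ with $g:[d_E]\hookrightarrow[n+1]$, since $n+1 > n \geq d_E$.

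The key step is to arrange the combinatorics of the injections so that the pullback of $e$ along $f$ — namely the pair $\{x,y\}$ — is forced to be an edge, contradicting $\{x,y\}\notin E(G_n)$. First I would note that edge-stability in fact gives, for the edge $e$ of $G_{n+1}$, a factorization through $G_n$: by edge-stability applied at level $n$ (which holds since $n \geq d_E$), $e = \{G(h)(a), G(h)(b)\}$ for some edge $\{a,b\}\in E(G_n)$ and some injection $h:[n]\hookrightarrow[n+1]$. Now both $G(f)$ and $G(h)$ are injections of $G_n$ into $G_{n+1}$ hitting the two endpoints of $e$. The plan is to compare them: since $G(f)$ is injective and its image contains both endpoints of $e$, the pair $\{x,y\} = G(f)^{-1}(e)$ is well-defined, and I want to show $\{x,y\}$ must already be an edge. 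The mechanism should be that any two injections $[n]\hookrightarrow[n+1]$ differ by precomposition with a permutation of $[n]$ up to the choice of the missing point, and by postcomposition with an element of $\Sn_{n+1}$; one can then transport the edge $\{a,b\}$ to $\{x,y\}$ via an automorphism of $G_n$ coming from $\Sn_n$, using functoriality and the torsion-free (injective) hypothesis to ensure no collapsing occurs along the way. Concretely, I would choose a permutation $\sigma\in\Sn_{n+1}$ with $G(\sigma)$ carrying the image of $G(h)$ onto the image of $G(f)$ and matching endpoints, restrict the comparison to $[n]$, and conclude $\{x,y\}$ is the $\Sn_n$-image of $\{a,b\}\in E(G_n)$, hence itself in $E(G_n)$.

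The main obstacle I anticipate is handling the ``missing point'' bookkeeping cleanly: two injections $[n]\hookrightarrow[n+1]$ need not have the same image, so one cannot simply precompose by a permutation of $[n]$ to pass from $h$ to $f$ — one may need to first apply a transposition in $\Sn_{n+1}$ to align the images, and then check that this transposition's action on $G_{n+1}$ still matches the two relevant endpoints (or can be further adjusted by $\Sn_n$ to do so). Making this alignment precise while keeping track of which permutations fix which vertices is the delicate part; the torsion-free hypothesis is exactly what rescues the argument, since it guarantees every relevant transition map is an embedding, so preimages of vertices and edges are unambiguous and the $\Sn_n$-orbit of $\{a,b\}$ genuinely lands in $E(G_n)$. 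I expect everything else — the appeal to edge-stability at degree $d_E$ and the factorization through $G_n$ — to be routine given Theorem \ref{fgprops} and Lemma \ref{fimodchar}.
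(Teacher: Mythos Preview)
Your setup is correct and coincides with the paper's: from a hypothetical failure of inducedness you extract a non-edge $\{x,y\}$ and (via edge-stability at level $n \geq d_E$) an edge $\{a,b\}$ in $G_n$ whose images under $G(f)$ and $G(h)$ are the \emph{same} edge $e$ of $G_{n+1}$. What must then be shown is that $\{x,y\}$ and $\{a,b\}$ lie in the same $\Sn_n$-orbit of pairs, which is absurd since $\Sn_n$ acts by graph automorphisms.

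The gap is in your mechanism for producing $\tau \in \Sn_n$ carrying $\{a,b\}$ to $\{x,y\}$. You ask for $\sigma \in \Sn_{n+1}$ that simultaneously (i) carries the image of $G(h)$ onto the image of $G(f)$ --- equivalently, sends the point of $[n+1]$ missed by $h$ to the point missed by $f$ --- and (ii) fixes the two endpoints of $e$. Condition (ii) confines $\sigma$ to the stabilizer of $e$ in $\Sn_{n+1}$, and (i) then asks that stabilizer to move one prescribed element of $[n+1]$ to another. There is no reason the stabilizer acts transitively enough to do so: for a general vertex-stable $\FI$-graph the endpoints of $e$ may have very small stabilizer, and the required $\sigma$ need not exist. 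The torsion-free hypothesis makes transition maps injective, so preimages are unambiguous, but it does not supply the missing permutation; this is the substantive obstruction, not bookkeeping. Example~\ref{orbitmergenonex} is instructive here: over a variant category in which long morphisms are merely chains of one-step inclusions --- precisely the structure your alignment argument uses --- the conclusion of Theorem~\ref{inducedtheorem} is false, so a correct proof must invoke more of the $\FI$-structure than a local comparison of two injections $[n]\hookrightarrow[n+1]$.

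The paper supplies that extra input algebraically. The needed statement --- that distinct $\Sn_n$-orbits on $\binom{V(G_n)}{2}$ cannot merge into a single $\Sn_{n+1}$-orbit --- is exactly torsion-freeness of the graded $\R[x]$-module $\Phi\bigl(\R\binom{V(G_\dt)}{2}\bigr)$ of coinvariants. This is isolated as Lemma~\ref{torsionlemma}: a finitely generated torsion-free $\FI$-module over a field of characteristic zero has torsion-free coinvariants, a consequence of the exactness of the coinvariants functor in characteristic zero. With the lemma in hand the contradiction is immediate: the non-edge orbit of $\{x,y\}$ and the edge orbit of $\{a,b\}$ are distinct in degree $n$ but map to the same orbit in degree $n+1$, yielding $x$-torsion in $\Phi\bigl(\R\binom{V(G_\dt)}{2}\bigr)$.
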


While it might seem natural for there to be some kind of pigeon-hole or counting argument for the above theorem, such an argument has thus far eluded the authors. Just like much of the rest of this work, we instead prove Theorem \ref{inducedtheorem} through the algebra of $\FI$-modules. To begin, we must rephrase the eventually induced property in the language of $\FI$-modules.\\

\begin{definition}
The \textbf{coinvariants functor} $\Phi$ from $\FI$-modules to graded $\R[x]$-modules is defined by
\[
\Phi(V)_n := V_n \otimes_{\Sn_n} \R
\]
Multiplication by $x$ is induced by the action of the transition maps.
\end{definition}

In the setting of $\FI$-graphs and their associated $\FI$-modules, the coinvariants functor takes a particularly nice form.\\

Recall that we define $\R\binom{V(G_\dt)}{2}$ to be the $\FI$-module encoding pairs of vertices of $G_\dt$. The coinvariants of $\R\binom{V(G_\dt)}{2}$ can be constructed in the following way. We define $\Phi$ to be the graded $\R[x]$-module for which $\Phi_n$ is the free $\R$ vector space with basis indexed by the orbits of the symmetric group action on pairs of vertices of $G_n$. For each $n$ we may define $\iota_n:[n] \hookrightarrow [n+1]$ to be the standard inclusion. Then $G(\iota_n)$ induces a map between the orbits of pairs of vertices of $G_n$ and those of $G_{n+1}$. Multiplication by $x$ in the module $\Phi$ will be defined by this map.\\

\begin{lemma}\label{torsionlemma}
Let $V$ be a finitely generated $\FI$-module. If $V$ is torsion-free as an $\FI$-module, then $\Phi(V)$ is torsion-free as a $\R[x]$-module.\\
\end{lemma}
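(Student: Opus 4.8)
The plan is to show that multiplication by $x$ on $\Phi(V)$ is injective, i.e.\ that if $\xi \in \Phi(V)_n = (V_n)_{\Sn_n}$ satisfies $x \cdot \xi = 0$ in $\Phi(V)_{n+1}$, then $\xi = 0$. Recall that $x \cdot \xi$ is the image of $\xi$ under the coinvariants of the transition map associated to the standard inclusion $\iota_n : [n] \hookrightarrow [n+1]$. Lift $\xi$ to an element $\tilde\xi \in V_n$. Then $(\iota_n)_\as(\tilde\xi) \in V_{n+1}$ has zero image in the $\Sn_{n+1}$-coinvariants, which means $(\iota_n)_\as(\tilde\xi)$ lies in the subspace spanned by elements of the form $w - \sigma \cdot w$ with $w \in V_{n+1}$, $\sigma \in \Sn_{n+1}$; equivalently, summing over a suitable set of coset representatives, the averaged element $\frac{1}{(n+1)!}\sum_{\sigma \in \Sn_{n+1}} \sigma \cdot (\iota_n)_\as(\tilde\xi)$ is $0$.

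The key step is to exploit the torsion-freeness of $V$ to transfer this vanishing back down to level $n$. First I would reduce to the case where $\R$ is replaced by $\Q$ (coinvariants over $\R$ and over $\Q$ agree after base change, and finite generation is preserved), so that we may freely use the averaging idempotent $e = \frac{1}{|\Sn_n|}\sum_{\sigma\in\Sn_n}\sigma$ acting on $V_n$ and its analogue on $V_{n+1}$. The coinvariants $\Phi(V)_n$ is then naturally identified with the image $e \cdot V_n$ of the symmetrization projector, i.e.\ with the invariants $(V_n)^{\Sn_n}$, and the module structure map $x \colon \Phi(V)_n \to \Phi(V)_{n+1}$ becomes: symmetrize, push forward along $(\iota_n)_\as$, symmetrize again. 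So it suffices to show that if $v \in (V_n)^{\Sn_n}$ and the full $\Sn_{n+1}$-symmetrization of $(\iota_n)_\as(v)$ vanishes, then $v = 0$. Since $v$ is already $\Sn_n$-invariant, this symmetrization is (up to the nonzero scalar $1/(n+1)$) $\sum_{j=1}^{n+1} (\tau_j)_\as(\iota_n)_\as(v) = \sum_{j=1}^{n+1} (f_j)_\as(v)$, where $\tau_j \in \Sn_{n+1}$ is chosen so that $f_j := \tau_j \circ \iota_n$ runs over the $n+1$ injections $[n] \hookrightarrow [n+1]$ corresponding to the $n+1$ possible images. The main obstacle is then the following linear-algebra fact, which is where torsion-freeness does real work: the map $V_n \to V_{n+1}^{\,\oplus(n+1)}$, $v \mapsto ((f_j)_\as(v))_j$, is injective — indeed any single $(f_j)_\as$ is injective by hypothesis — but we need more, namely that the sum $\sum_j (f_j)_\as(v)$ cannot vanish for $0 \neq v$ invariant.

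To handle this, I would compose with a further transition map to separate the summands: pick the inclusion $\iota_{n+1}\cdots$ going up to $[2n+1]$, or more cleanly, choose injections $g_j : [n+1] \hookrightarrow [N]$ for $N$ large with pairwise disjoint images away from a common part, arranged so that the composites $g_j \circ f_j : [n] \hookrightarrow [N]$ have images that are ``sufficiently generic''; then $(g_j)_\as$ applied to the relation $\sum_j (f_j)_\as(v) = 0$ still gives $\sum_j (g_j f_j)_\as(v) = 0$ after one more symmetrization, but now a standard argument using the fact that $V$ is finitely generated and torsion-free (so embeds in a finite sum of representable $\FI$-modules $\R[\Hom_{\FI}([k],-)]$, whose coinvariants are visibly torsion-free polynomial modules $\R[x]$, cf.\ the explicit description of $\Phi$ preceding the lemma) forces $v = 0$. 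Concretely, the cleanest route is: embed $V \hookrightarrow \bigoplus_i M(k_i)$ where $M(k) := \R[\Hom_{\FI}([k],-)]$; the functor $\Phi$ is right exact but, crucially, applied to this \emph{injection} of torsion-free modules it remains injective in each degree because symmetrization is exact over $\Q$ and the cokernel, being again finitely generated, has its torsion concentrated in bounded degree which one checks is killed — alternatively one observes directly that $\Phi(M(k))_n = \Q$ for $n \geq k$ with $x$ acting as the identity, hence $\Phi(M(k))$ is the torsion-free $\R[x]$-module $x^k \R[x]$, and $\Phi$ of a submodule of a finite sum of these is a submodule of a finite sum of torsion-free $\R[x]$-modules, hence torsion-free. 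I expect the subtle point — the main obstacle — to be verifying that $\Phi$ carries the injection $V \hookrightarrow \bigoplus M(k_i)$ to an injection, since $\Phi$ is only right exact in general; the resolution is that over $\Q$ the coinvariants functor is \emph{exact} (symmetrization splits the surjection onto coinvariants), so injectivity is automatic, and the whole lemma reduces to the explicit computation $\Phi(M(k)) \cong x^k\R[x]$.
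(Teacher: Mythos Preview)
Your proposal ultimately lands on the same key fact the paper invokes --- exactness of the coinvariants functor in characteristic zero --- and your final route (embed $V$ into a finite direct sum of representables $M(k)$, apply the exact functor $\Phi$, and use the explicit computation $\Phi(M(k)) \cong x^k\R[x]$) is a correct proof. The paper's own argument is literally that single sentence, so you have filled in considerably more than the authors did; the earlier exploratory attempt to separate the summands $(f_j)_\ast v$ by passing to large degree is superseded once you commit to the embedding approach and can be dropped.

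The one step you assert without justification is that a finitely generated torsion-free $\FI$-module over $\R$ embeds into a finite direct sum of representables $M(k)$. This is true but not automatic, and it is really where the full $\FI$-structure (as opposed to just the chain of consecutive transition maps) is doing work --- compare the paper's Example~\ref{orbitmergenonex}, which shows that over the weaker ``consecutive-maps'' category the conclusion can fail. One clean justification: by the shift theorem of \cite{CEFN}, for $N\gg 0$ the shifted module with $n$-th piece $V_{n+N}$ is $\sharp$-filtered; in characteristic zero the induced modules $M(W)$ are direct summands of the projective $M(k)$, so all extensions among them split and the shift is a genuine direct sum of such modules; torsion-freeness of $V$ then supplies the injection from $V$ into its shift. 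With that embedding in hand, exactness of $\Phi$ and your computation of $\Phi(M(k))$ finish the argument exactly as you outline.
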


\begin{proof}
This follows from the fact that coinvariants are exact over fields of characteristic 0.\\
\end{proof}

This lemma is the key piece needed to prove Theorem \ref{inducedtheorem}.\\

\begin{proof}[Proof of Theorem \ref{inducedtheorem}]
Let $G_\dt$ be a torsion-free vertex-stable $\FI$-graph, and assume that $G_\dt$ has edge stable degree $\leq d_E$. Assume by way of contradiction that there is some $n \geq d_E$ such that the image of any transition map $G(f):G_n \rightarrow G_{n+1}$ is not an induced subgraph. This implies that there is some pair of vertices $\{v_1,v_2\}$ in $G_n$, which are not connected by an edge, while $G(f)(\{v_1,v_2\})$ is an edge of $G_{n+1}$. On the other hand, because $n \geq d_E$, there must be some transition map $G(h)$, as well as some edge $e \in E(G_n)$ such that $G(h)(e) = G(f)(\{v_1,v_2\})$. We may apply some element of $\Sn_{n+1}$ to conclude the following: The transition map $G(f)$ must map some non-edge of $G_n$, as well as some edge of $G_n$, to the same $\Sn_{n+1}$ orbit on the pairs of vertices of $G_{n+1}$. In particular, this would imply that the coinvariants of $\R \binom{V(G_\dt)}{2}$ has torsion. This contradicts Lemma \ref{torsionlemma}.\\
\end{proof}

\section{Applications}

\subsection{Enumerative consequences of vertex-stability}

We begin this section by revisiting the invariants $\eta_H$ and $\eta_H^{ind}$ for some fixed graph $H$. In particular, if $G_\dt$ is a vertex-stable $\FI$-graph, we consider the functions
\[
n \mapsto \eta_H(G_n) \text{ and } n \mapsto \eta_H^{ind}(G_n).
\]
Our primary result in this direction is the following.\\

\begin{theorem}\label{enumthm}
Let $G_\dt$ be a vertex-stable graph with stable degree $\leq d$. Then for any graph $H$ there exists polynomials $p_H(X), p_H^{ind}(X) \in \Q[X]$ of degree $\leq d\cdot|V(T)|$ such that for all $n \gg 0$,
\[
p_H(n) = \eta_H(G_n) \text{ and } p_H^{ind}(n) = \eta_H^{ind}(G_n)
\]
\end{theorem}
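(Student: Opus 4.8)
The plan is to realize each of the counting functions $n \mapsto \eta_H(G_n)$ and $n \mapsto \eta_H^{ind}(G_n)$ as the dimension of an $\FI$-module that is $d\cdot|V(H)|$-small, and then invoke part (\ref{poly}) of Theorem \ref{repstab}. Concretely, fix the graph $H$ and let $v = |V(H)|$. For each $n$, let $W_n$ be the free $\R$-vector space with basis indexed by the subgraphs of $G_n$ isomorphic to $H$ (respectively, for the induced count, by the induced subgraphs isomorphic to $H$). The symmetric group $\Sn_n$ acts on $G_n$ by graph automorphisms, hence permutes these subgraphs, so each $W_n$ is an $\Sn_n$-representation; and an injection $f:[n]\hookrightarrow[n+1]$ induces $G(f):G_n\to G_{n+1}$, which by Theorem \ref{fgprops} is eventually injective with induced image, so it carries a copy of $H$ to a copy of $H$ (for the induced version one uses precisely that the image is an induced subgraph, so induced-ness is preserved). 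This makes $W_\dt$ (or its eventual truncation) into an $\FI$-module whose dimension at $[n]$ is exactly the desired count.

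The first key step is then to bound the generating degree of $W_\dt$. Here I would use the $r$-vertex-stability from Theorem \ref{fgprops} (equivalently, finite generation of $\R\binom{V(G_\dt)}{r}$ from Lemma \ref{fimodchar}, with the degree bound $rd$ from Remark \ref{small}), applied with $r = v$: any copy of $H$ in $G_{n+1}$ is supported on $v$ vertices, and by $v$-vertex-stability those $v$ vertices are the image under a transition map of $v$ vertices of $G_n$; once $n$ is large enough that transition maps are injective and induced, that preimage spans a subgraph in which the corresponding copy of $H$ already sits. Thus for $n\gg0$ every basis element of $W_{n+1}$ is in the image of a transition map from $W_n$, i.e. $W_\dt$ agrees for $n\gg 0$ with a finitely generated $\FI$-module. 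To get the precise smallness bound, I would exhibit a surjection onto $W_\dt$ (after truncation) from a subquotient of $\R\binom{V(G_\dt)}{v}$: map a collection of $v$ vertices that happens to span a subgraph isomorphic to $H$ to that subgraph, and everything else to zero. Since $\R\binom{V(G_\dt)}{v}$ is generated in degree $\leq vd$ by Remark \ref{small}, $W_\dt$ is $vd$-small.

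Finally, Theorem \ref{repstab}(\ref{poly}) applied to the $vd$-small $\FI$-module $W_\dt$ produces a polynomial $p_H(X)\in\Q[X]$ of degree $\leq vd = d\cdot|V(H)|$ with $p_H(n) = \dim_\R W_n = \eta_H(G_n)$ for $n\gg 0$; the identical argument with induced subgraphs gives $p_H^{ind}$.

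The main obstacle I anticipate is the bookkeeping needed to see that the transition maps genuinely send copies of $H$ to copies of $H$ in a way compatible with the $\FI$-structure, and to handle the low-degree terms cleanly: before the (unspecified) threshold past which transition maps are injective and have induced image, $W_\dt$ need not be well-behaved as an $\FI$-module (a transition map could collapse two vertices of a putative copy of $H$, or add an edge destroying induced-ness). The standard fix is to replace $W_\dt$ by a truncation that is zero below the threshold — this does not change the eventual value of the counting function and, by the Noetherian property, the truncation of a finitely generated/$d$-small $\FI$-module is again finitely generated/$d$-small — so the polynomiality conclusion for $n\gg0$ is unaffected. A secondary subtlety, already flagged in the remark following Theorem \ref{enumthmab}, is making sure the count of ``subgraphs isomorphic to $H$'' matches the chosen convention (injections up to $\Aut(H)$); since $H$ is fixed independent of $n$, passing between the conventions only rescales $W_n$ by a bounded factor on each $\Sn_n$-orbit, which does not affect smallness or the degree bound.
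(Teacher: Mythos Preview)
Your approach is essentially the paper's: build an $\FI$-module $W_\dt$ whose dimension is the desired count, truncate it to zero below the threshold where transition maps are injective with induced image, show it is $d\cdot|V(H)|$-small, and invoke Theorem \ref{repstab}. The paper carries this out by counting graph \emph{injections} $H\hookrightarrow G_n$ (a constant multiple of $\eta_H(G_n)$) and embedding the resulting module as a submodule of $(\R V(G_\dt))^{\otimes |V(H)|}$, which is generated in degree $\leq d\cdot|V(H)|$ by Proposition \ref{tensorgen}.

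There is one genuine glitch in your smallness step for the non-induced count. The map ``send a $v$-element vertex set that happens to span a subgraph isomorphic to $H$ to that subgraph'' is not well-defined: a single $v$-element vertex set in $G_n$ can support several distinct (non-induced) subgraphs isomorphic to $H$. In fact $\dim_\R W_n$ can exceed $\dim_\R \R\binom{V(G_n)}{v}$ --- for instance with $H$ the path on three vertices and $G_\dt=K_\dt$ one gets $3\binom{n}{3}$ versus $\binom{n}{3}$ --- so $W_\dt$ is not a subquotient of $\R\binom{V(G_\dt)}{v}$ at all. The fix is exactly what the paper does: pass to graph injections and embed into the tensor power $(\R V(G_\dt))^{\otimes v}$, where an injection genuinely records an ordered $v$-tuple of vertices. (For the induced count your map \emph{is} well-defined, since a vertex set determines at most one induced subgraph.) With that replacement your argument goes through and coincides with the paper's.
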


\begin{proof}
We will count the number of graph injections from $H$ to $G_\dt$. This quantity is a constant multiple of $\eta_H(G_\dt)$, and it is therefore sufficient to count. Let $V^H_\dt$ denote the $\FI$-module whose evaluation at $[n]$ is the $\R$ vector space with basis indexed by the distinct copies of $H$ inside $G_n$. To make sure the transition maps are well defined, we will set $V^H_{n} = 0$ before the point where the transition maps of $G_\dt$ are both induced and injective. We therefore see that $V^H_\dt$ can be realized as a submodule
\[
V^H_\dt \hookrightarrow (\R V(H))^{\otimes |V(T)|}
\]
Proposition \ref{tensorgen} implies that $V^H_\dt$ is $(d\cdot|V(T)|)$-small, and Theorem \ref{repstab} implies the existence of our desired polynomial. The proof for the induced case is the same.\\
\end{proof}

\begin{example}
Looking the $\FI$-graph $K_\dt$ of complete graphs, the above result is clear. $H$ cannot appear in $K_n$ when $n < |V(H)|$. If we call $\gamma_H$ the number of copies of $H$ in $K_{|V(H)|}$, then
\[
p_H(n) = \binom{n}{|V(H)|}\gamma_H
\]
The content of Theorem \ref{enumthm} is that this behavior is common to all vertex-stable $\FI$-graphs. The examples of the previous section illustrate that vertex-stable $\FI$-graphs can be fairly diverse, and so this might come as a bit of a surprise.\\

Fix $k \geq 2$ and let $KG_{\dt,k}$ be the $\FI$-graph which encodes the Kneser graphs. In this case we may easily count the number of triangles which appear in $KG_{n,k}$. Indeed, to form a triangle, one needs to provide three mutually disjoint subsets of $[n]$ of size $k$. It follows that
\[
p_{K_3}(n) = \frac{\binom{n}{3k}\binom{3k}{k,k,k}}{6}.
\]
Note that if we take the usual convention that $\binom{n}{k} = 0$ whenever $n < k$, then the above polynomial agrees with $\eta_{K_3}(KG_{n,k})$ for all $n \geq 0$. If we instead try to count the number of occurrences of the graph which looks like the letter $H$, things get considerably more complicated. Despite the seeming drastic increase in difficulty, Theorem \ref{enumthm} assures us that the value of $\eta_H(KG_{n,k})$ must (eventually) agree with a polynomial, and that this polynomial will have degree at most $6k$.\\
\end{example}

As an immediate corollary to the above, we find that vertex-stable $\FI$-graphs have very controlled growth in their vertices and edges, as well as in the degrees of their vertices.\\

\begin{corollary}\label{degreegrow}
Let $G_\dt$ be a vertex-stable $\FI$-graph. Then the following functions are each equal to a polynomial for $n \gg 0$:
\begin{enumerate}
\item $n \mapsto |V(G_n)|$;
\item $n \mapsto |E(G_n)|$;
\item $n \mapsto \delta(G_n)$;
\item $n \mapsto \Delta(G_n)$.\\
\end{enumerate}
\end{corollary}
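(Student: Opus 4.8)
The plan is to deduce all four statements as consequences of Theorem \ref{enumthm} and the structural results already established, rather than proving anything from scratch. For parts (1) and (2), the observation is that $|V(G_n)|$ counts the number of subgraphs of $G_n$ isomorphic to the single-vertex graph $H_0$ (one vertex, no edges), and $|E(G_n)|$ counts the number of subgraphs isomorphic to $H_1 = K_2$ (two vertices, one edge). Applying Theorem \ref{enumthm} with these fixed choices of $H$ immediately yields polynomials agreeing with these functions for $n \gg 0$, of degree at most $d$ and $2d$ respectively. (Equivalently, one may invoke Theorem \ref{repstab}(\ref{poly}) directly, since $\R V(G_\dt)$ is finitely generated in degree $\leq d$ hence $d$-small, and $\R E(G_\dt)$ is $2d$-small by Remark \ref{small}; the dimensions of these $\FI$-modules at $[n]$ are exactly $|V(G_n)|$ and $|E(G_n)|$.)

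For parts (3) and (4), the key point is that, once $n$ is large enough that $G_\dt$ is $2$-vertex-stable (which holds for $n \gg 0$ by Theorem \ref{fgprops}), the symmetric group $\Sn_n$ acts transitively on each orbit of vertices, and, crucially, the valency $\mu(v)$ is constant on each $\Sn_n$-orbit of vertices of $G_n$. Moreover, by Theorem \ref{repstab}(3), the number of $\Sn_n$-orbits of vertices of $G_n$ is eventually constant, say equal to $s$; write the orbits as $O_1(n), \ldots, O_s(n)$, and let them be ``continued'' coherently across $n$ via the transition maps (each orbit in degree $n$ maps into a unique orbit in degree $n+1$, and for $n \gg 0$ this correspondence is a bijection). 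Picking a representative vertex $v_i(n) \in O_i(n)$, the valency $\mu(v_i(n))$ equals the number of edges of $G_n$ adjacent to $v_i(n)$. I would argue that each function $n \mapsto \mu(v_i(n))$ eventually agrees with a polynomial: the set of edges of $G_n$ adjacent to a fixed vertex in orbit $O_i$ can be organized into an $\FI$-module (or, more carefully, the count can be extracted from the $\FI$-module $\R E(G_\dt)$ together with the $\FI$-module structure on vertices), and its dimension is governed by Theorem \ref{repstab}(\ref{poly}). Then $\Delta(G_n) = \max_i \mu(v_i(n))$ and $\delta(G_n) = \min_i \mu(v_i(n))$ are, for $n \gg 0$, the pointwise max and min of finitely many polynomials; since any two distinct polynomials agree only finitely often, for $n \gg 0$ a single one of them achieves the max (resp. min) at every $n$, so $\Delta(G_n)$ and $\delta(G_n)$ each eventually agree with a polynomial.

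The main obstacle I anticipate is making precise the claim that $n \mapsto \mu(v_i(n))$ is eventually polynomial — that is, setting up the right $\FI$-module whose dimension at $[n]$ computes the valency of a representative vertex in a fixed orbit. The cleanest route is probably as follows: fix $i$ and $n_0$ large, fix a representative $v \in O_i(n_0)$, and for $n \geq n_0$ consider the vertices of $G_n$ lying over $v$ under transition maps together with their incident edges; one wants an $\FI$-module (over the relevant subcategory, or after a standard shift/restriction trick) whose $[n]$-value has dimension $\mu(v_i(n))$. Alternatively, and perhaps more simply, one can count \emph{flags} (a vertex together with an incident edge): the $\FI$-module $\R F(G_\dt)$ with $\R F(G_n)$ spanned by pairs $(v,e)$ with $v \in e \in E(G_n)$ embeds into $\R V(G_\dt) \otimes \R E(G_\dt)$, hence is $(3d)$-small and its dimension $\sum_{v} \mu(v)$ is eventually polynomial; but to isolate individual orbit-valencies one decomposes $\R F(G_\dt)$ according to which vertex-orbit the first coordinate lies in — a decomposition that stabilizes by Theorem \ref{repstab}(3) — and applies Theorem \ref{repstab}(\ref{poly}) to each summand. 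Once each orbit-valency is known to be eventually polynomial, the max/min argument finishes (3) and (4) routinely.
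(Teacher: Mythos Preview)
Your proposal is correct in outline and for parts (1)--(2) matches the paper exactly. For parts (3)--(4) you follow the same overall strategy as the paper --- reduce to showing that the valency on each $\Sn_n$-orbit of vertices is eventually polynomial, then use that distinct polynomials agree only finitely often to conclude the same for the max and min --- but you take a different route to the key intermediate claim.

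The paper establishes orbit-valency polynomiality by building, for each orbit $\Ob_v$, the neighborhood $\FI$-graph $N(\Ob_v(\dt))$, applying part (2) to it, and then using the counting identity
\[
|E(N(\Ob_v(n)))| = \mu(\Ob_v(n)) \cdot |\Ob_v(n)| - |E(\Ob_v(n))|
\]
to solve for $\mu(\Ob_v(n))$ as a ratio of polynomials; since this ratio takes integer values it must itself be a polynomial. Your flag-module approach is a cleaner variant of the same idea: decomposing $\R F(G_\dt)$ by vertex orbit gives submodules $\R F_i(G_\dt)$ with $\dim_\R \R F_i(G_n) = |O_i(n)|\cdot \mu(v_i(n))$, and no correction term is needed since flags distinguish the two endpoints of an edge. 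The one step you leave implicit is the last one: applying Theorem~\ref{repstab}(\ref{poly}) to $\R F_i$ gives polynomiality of the \emph{product} $|O_i(n)|\cdot \mu(v_i(n))$, not of $\mu(v_i(n))$ itself; you still need to divide by the (polynomial) orbit size $|O_i(n)|$ and invoke the fact that an integer-valued rational function is a polynomial. This is exactly the argument the paper uses to finish, so once you make that division step explicit your proof is complete and, if anything, slightly more economical than the paper's.
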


\begin{proof}
The first two statements follow from a direct application of Theorem \ref{enumthm} with $H$ being an isolated vertex and a single edge, respectively.\\

For the final two statements, we prove a more general statement. Fix $m \gg 0$, and let $v \in V(G_m)$. Then every vertex in the $\Sn_m$ orbit of $v$, which we denote $\Ob_v(m)$, has the same degree. Let $\R \Ob_v(\dt)$ denote the submodule of $\R V(G_\dt)$ generated by $v$, and for $n \geq m$ let $\mu(\Ob_v(n))$ denote the order of any (and therefore all) vertices in $\Ob_v(n)$. We will prove that the map
\[
n \mapsto \mu(\Ob_v(n))
\]
is equal to a polynomial. To see that this implies the final two statements of our corollary, note that the final part of Theorem \ref{repstab} implies that the total number of distinct orbits of $V(G_n)$ is eventually independent of $n$. Because non-equal polynomials are only permitted to be equal at finitely many points, the above implies that there is a well defined polynomial which outputs the smallest (or largest) degree of a vertex.\\

To prove our more general claim, we need a bit of notation. We will write $\R E(\Ob_v(\dt))$ for the submodule of $\R E(G_\dt)$ whose $n$-th piece is spanned by edges whose both end points are in $\Ob_v(n)$. We will also write $N(\Ob_v(n))$ to denote the subgraph of $G_n$ comprised of all vertices and edges that one may encounter by beginning at a vertex in $\Ob_v(n)$ and moving along any single edge adjacent to it. Put another way, $N(\Ob_v(n))$ is the neighborhood graph on the vertex set $\Ob_v(n)$. By setting $N(\Ob_v(n)) = \emptyset$ whenever $n < m$, we see that $N(\Ob_v(\dt))$ is actually a vertex-stable $\FI$-graph. Therefore, by the second part of this corollary,
\[
n \mapsto |E(N(\Ob_v(n)))|
\]
is eventually equal to a polynomial. On the other hand, we may count the set $|E(N(\Ob_v(n)))|$ in the following alternative way,
\[
|E(N(\Ob_v(n)))| = \mu(\Ob_v(n)) \cdot |\Ob_v(n)| - |E(\Ob_v(n))|.
\]
In other words, if we sum the degrees of all vertices in $\Ob_v(n)$, then we would have counted each edge in $E(\Ob_v(n))$ exactly twice. Because $\R \Ob_v(\dt)$ is a submodule of $\R V(G_\dt)$, we know that its dimension is eventually equal to a polynomial. A similar statement can also be made about $|E(\Ob_v(n))|$. Solving for $\mu(\Ob_v(n))$, we find that it is equal to a rational function for $n$ sufficiently large. However, the only rational functions which can take integral values at all sufficiently large integers are polynomials. This concludes the proof.\\
\end{proof}

Another consequence of vertex stability that one may deduce is related to finite walks in the graph $G_n$. Recall that a \textbf{walk of length $r$} in a graph $G$ is a tuple of vertices of $G$, $(v_0,\ldots,v_r)$, such that for each $i$, $\{v_i,v_{i+1})$ is an edge of $G$. We say that a walk is \textbf{closed} if $v_r = v_0$.\\

\begin{theorem}\label{walks}
Let $G_\dt$ be a vertex-stable $\FI$-graph with stable degree $\leq d$. Then the following functions are each equal to a polynomial of degree $\leq rd$ for $n \gg 0$ and any fixed $r \geq 0$:
\begin{enumerate}
\item $n \mapsto |\{\text{walks in $G_n$ of length $r$}\}|$;
\item $n \mapsto |\{\text{closed walks in $G_n$ of length $r$}\}|$.
\end{enumerate}
\end{theorem}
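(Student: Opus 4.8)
The plan is to mimic the strategy used for Theorem \ref{enumthm}: encode the objects being counted as the basis of an $\FI$-module, realize that module as a submodule of an evident finitely generated (indeed $d$-small) $\FI$-module, and then invoke the Noetherian property together with the polynomiality statement of Theorem \ref{repstab}(\ref{poly}). A walk of length $r$ in a graph $G$ is an $(r+1)$-tuple of vertices $(v_0,\dots,v_r)$ with $\{v_i,v_{i+1}\}\in E(G)$ for all $0\le i\le r-1$; a closed walk additionally requires $v_r=v_0$. In both cases a walk is in particular an $(r+1)$-tuple (respectively an $r$-tuple, after discarding the redundant last coordinate) of vertices of $G_n$, so it is natural to work inside $(\R V(G_\dt))^{\otimes(r+1)}$ (respectively $(\R V(G_\dt))^{\otimes r}$).

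First I would define, for fixed $r\ge 0$, the $\FI$-module $W^r_\dt$ whose evaluation $W^r_n$ is the $\R$-vector space with basis indexed by the walks of length $r$ in $G_n$, with transition maps induced by those of $G_\dt$: given an injection $f:[n]\hookrightarrow[m]$, the map $G(f)$ carries a walk $(v_0,\dots,v_r)$ to $(G(f)(v_0),\dots,G(f)(v_r))$, which is again a walk since graph morphisms preserve edges; if $G(f)$ is not injective on the vertices of this walk the image need not be a genuine $(r+1)$-tuple in our basis, so we declare the map to be zero there, exactly as in the definition of $\R\binom{V(G_\dt)}{r}$. To make the transition maps unambiguously well defined one restricts attention to $n$ beyond the degree at which the transition maps of $G_\dt$ are injective and induced (Theorem \ref{fgprops}), setting $W^r_n=0$ before that point; since the conclusion is only asymptotic this costs nothing. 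There is then an obvious injection of $\FI$-modules
\[
W^r_\dt \hookrightarrow (\R V(G_\dt))^{\otimes (r+1)}
\]
sending a walk to the corresponding basis tensor $v_0\otimes\cdots\otimes v_r$. Similarly, letting $\overline{W}^r_\dt$ have basis indexed by the closed walks of length $r$, the assignment $(v_0,\dots,v_{r-1},v_0)\mapsto v_0\otimes\cdots\otimes v_{r-1}$ gives an injection $\overline{W}^r_\dt\hookrightarrow(\R V(G_\dt))^{\otimes r}$.

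Next I would assemble the conclusion. By Lemma \ref{fimodchar}(1) the hypothesis that $G_\dt$ is vertex-stable of stable degree $\le d$ means $\R V(G_\dt)$ is finitely generated in degree $\le d$. By Proposition \ref{tensorgen}(2), $(\R V(G_\dt))^{\otimes(r+1)}$ is finitely generated in degree $\le (r+1)d$; but I actually want the sharper bound $rd$, so instead I would note that a walk of length $r$ is determined by its initial vertex $v_0$ together with the ordered list of $r$ edges $e_i=\{v_{i-1},v_i\}$ it traverses, giving an injection $W^r_\dt\hookrightarrow \R V(G_\dt)\otimes(\R E(G_\dt))^{\otimes r}$ — and likewise a closed walk of length $r$ injects into $(\R E(G_\dt))^{\otimes r}$ via its edge sequence. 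Since $\R E(G_\dt)$ is $2d$-small (Remark \ref{small}), hmm, that would give degree $\le 2rd$, not $rd$; so the right source is instead to build the walk module as a submodule of an $\FI$-module generated in degree $\le rd$ by a more careful inductive construction, realizing $W^r_\dt$ inside $\R V(G_\dt)\otimes_{\text{(edge-constrained)}}\cdots$ — concretely, one checks directly that a walk of length $r$ needs only $rd$ "generators'' because, by repeated application of Theorem \ref{fgprops}(1) ($r$-vertex-stability) applied to the vertex set of a walk, all $r+1$ vertices of any walk in $G_n$ (for $n$ large) lie in the common image of a transition map from $G_{rd}$ whenever $n\ge rd$. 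Thus $W^r_\dt$ and $\overline{W}^r_\dt$ are $(rd)$-small. By Lemma \ref{fimodchar} and the Noetherian theorem they are finitely generated, and Theorem \ref{repstab}(\ref{poly}) then supplies polynomials $p(X),\bar p(X)\in\Q[X]$ of degree $\le rd$ with $p(n)=\dim_\R W^r_n=|\{\text{walks of length }r\text{ in }G_n\}|$ and $\bar p(n)=\dim_\R\overline{W}^r_n=|\{\text{closed walks of length }r\text{ in }G_n\}|$ for all $n\gg 0$.

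The main obstacle I anticipate is pinning down the degree bound $rd$ rather than a weaker bound like $(r+1)d$ or $2rd$: the tensor-power estimate from Proposition \ref{tensorgen} is too lossy, and the edge-module estimate from Remark \ref{small} is worse, so the clean bound really wants the explicit "choose a transition map from $G_{rd}$ through which all vertices of the walk factor'' argument in the spirit of Remark \ref{rvertexstab}, together with the observation that a walk in such an image is pulled back from (a bounded amount of) data in $G_{rd}$. Making that pullback precise — in particular handling the non-injective transition maps and the degenerate walks where vertices collide — is the only genuinely fiddly point; everything else is a direct transcription of the proof of Theorem \ref{enumthm}.
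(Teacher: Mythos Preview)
Your core argument is exactly the paper's: define the walk $\FI$-module $W_r(G_\dt)$ (set to zero before the transition maps of $G_\dt$ become injective, by Theorem~\ref{fgprops}), embed it into a tensor power of $\R V(G_\dt)$, and apply Proposition~\ref{tensorgen}, the Noetherian property, and Theorem~\ref{repstab}(\ref{poly}). The paper handles closed walks even more simply than you do, just observing $W_r^c(G_\dt)\subseteq W_r(G_\dt)$ rather than giving a separate embedding.

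Your long detour chasing the bound $rd$ versus $(r+1)d$ is unnecessary and should be cut. The paper simply writes the embedding as $W_r(G_\dt)\hookrightarrow \R V(G_\dt)^{\otimes r}$ via $(v_1,\dots,v_r)\mapsto v_1\otimes\cdots\otimes v_r$ and reads off degree~$\leq rd$; the indexing there is loose (as you correctly note, a length-$r$ walk has $r+1$ vertices), but that is the paper's slip, not a gap in your reasoning. Neither of your attempted fixes helps: the edge-module embedding gives a worse bound, and your $r$-vertex-stability argument would actually require $(r+1)$-vertex stability to cover all $r+1$ vertices of the walk, landing you back at $(r+1)d$. Just state the tensor-power embedding and move on.
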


\begin{proof}
Our strategy here is similar to the strategy of much of the rest of the paper. Encode the objects we hope to count as the dimension of some vector space, and use the Noetherian property to prove that the collection of all these vector spaces form a finitely generated $\FI$-module. Let $W_r(G_\dt)$ denote the $\FI$-module for which $W_r(G_n)$ is the formal vector space spanned by walks of length $r$ in $G_n$. Similarly define $W_r^c(G_\dt)$ for closed walks. Note that these $\FI$-modules may not be well defined if the transition maps of $G_\dt$ are not injective. While injectivity may not be the case for small $n$, Theorem \ref{fgprops} implies that it certainly will be the case for $n \gg 0$. Therefore, we simply define $W_r(G_n)$, and $W_r^c(G_n)$ to be zero before injectivity takes effect. It is clear that $W_r^c(G_\dt)$ is a submodule of $W_r(G_\dt)$, and so to prove the corollary it will suffice to show that $W_r(G_\dt)$ is finitely generated. To prove this, we simply note that there is an embedding,
\[
W_r(G_\dt) \hookrightarrow \R V(G_\dt)^{\otimes r}
\]
defined on points by
\[
(v_1,\ldots,v_r) \mapsto v_1 \otimes \ldots \otimes v_r
\]
The module $\R V(G_\dt)^{\otimes r}$ is finitely generated in degree $\leq r \cdot d$ by Proposition \ref{tensorgen}. The Noetherian property concludes the proof.\\
\end{proof}

The above work illustrate how certain invariants of $G_n$ can grow with $n$. We also find, however, there are some invariants which must eventually stabilize.\\

\begin{corollary}\label{diastable}
Let $G_\dt$ denote a vertex-stable $\FI$-graph. Then the following invariants are independent of $n$ for $n \gg 0$:
\begin{enumerate}
\item the diameter of $G_n$;
\item the \textbf{girth} (i.e. the size of the smallest cycle) of $G_n$;
\end{enumerate}
\end{corollary}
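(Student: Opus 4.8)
The plan is to reduce both invariants to finite‑generation statements about $\FI$-modules built from $\R\binom{V(G_\dt)}{2}$ and the tensor powers $\R V(G_\dt)^{\otimes m}$, and then combine the Noetherian (ascending chain) property with the eventual polynomiality of part (4) of Theorem \ref{repstab}. Throughout one discards the finitely many initial degrees in which the transition maps are not yet injective (Theorem \ref{fgprops}); this is harmless since the claim concerns $n \gg 0$.

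For the diameter: for each $\ell \geq 1$ let $P_\ell(G_\dt) \subseteq \R\binom{V(G_\dt)}{2}$ be the submodule whose value at $[n]$ is spanned by the pairs of vertices of $G_n$ at distance at most $\ell$. Since graph homomorphisms do not increase distances, each $P_\ell$ really is an $\FI$-submodule; since $\R\binom{V(G_\dt)}{2}$ is finitely generated (Theorem \ref{fgprops}(1) and Lemma \ref{fimodchar}(4)) and finitely generated $\FI$-modules are Noetherian, the chain $P_1 \subseteq P_2 \subseteq \cdots$ stabilizes, say at $P_L$. Because the pairs at distance $\leq \ell$ form a \emph{subset of a basis} of $\R\binom{V(G_n)}{2}$, stabilization at $L$ forces, for every $n$, that any two vertices of $G_n$ in a common connected component are at distance $\leq L$; in particular every component of every $G_n$ has diameter $\leq L$, so $\mathrm{diam}(G_n)\in\{1,\dots,L\}\cup\{\infty\}$. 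Next, for fixed $\ell\leq L$ one has $\mathrm{diam}(G_n)\leq \ell$ if and only if $\dim_\R P_\ell(G_n)=\dim_\R \R\binom{V(G_n)}{2}$; both quantities are eventually polynomial in $n$ (both modules are $2d$-small by Remark \ref{small}, so part (4) of Theorem \ref{repstab} applies), so their difference is eventually a polynomial, hence eventually identically zero or eventually nonvanishing. Thus ``$\mathrm{diam}(G_n)\leq\ell$'' is eventually always true or eventually always false for each $\ell\leq L$; since it is monotone in $\ell$ and there are finitely many values to check, $\mathrm{diam}(G_n)$ is eventually constant (equal to the least such $\ell$, or to $\infty$ if $G_n$ is eventually disconnected).

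For the girth I bootstrap off the bound $L$. Recall the elementary fact that a graph containing a cycle has girth at most $2r+1$, where $r$ is the diameter of a component carrying a shortest cycle; combined with the bound above, $\mathrm{girth}(G_n)\in\{3,4,\dots,2L+1\}\cup\{\infty\}$ for $n\gg 0$. For each fixed $m$ with $3\leq m\leq 2L+1$, let $C_m(G_\dt)$ be the $\FI$-module whose value at $[n]$ is spanned by the $m$-cycles of $G_n$. An injective graph homomorphism sends an $m$-cycle to an $m$-cycle, so $C_m(G_\dt)$ embeds as an $\FI$-submodule of $\R V(G_\dt)^{\otimes m}$ via $(v_0,\dots,v_{m-1})\mapsto v_0\otimes\cdots\otimes v_{m-1}$; since $\R V(G_\dt)^{\otimes m}$ is finitely generated in degree $\leq md$ (Proposition \ref{tensorgen}), $C_m(G_\dt)$ is $md$-small, so $n\mapsto \dim_\R C_m(G_n)$ is eventually polynomial, hence eventually zero or eventually nonzero. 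Therefore ``$G_n$ contains an $m$-cycle'' is eventually constant for each $m$ in the finite list $\{3,\dots,2L+1\}$, and so $\mathrm{girth}(G_n)$ --- the least such $m$, or $\infty$ if there is none --- is eventually constant.

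The crux in both parts is the passage from ``each individual threshold is eventually decided'' to ``the invariant is eventually constant'', which would fail if the relevant minimum ranged over infinitely many candidates. The essential step is therefore producing the uniform bound $L$, and that is exactly where the ascending chain condition (rather than mere finite generation) does the work; everything afterward is bookkeeping with eventual polynomiality.
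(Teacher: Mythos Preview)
Your proof is correct, but the paper's argument is much shorter and takes a genuinely different route. The paper simply observes that both invariants are eventually weakly decreasing in $n$: by $2$-vertex stability (Theorem \ref{fgprops}), any pair $u,v\in V(G_n)$ is the image under some transition map of a pair $x,y\in V(G_{n-1})$, and since the image of a path is a walk, $d_{G_n}(u,v)\leq d_{G_{n-1}}(x,y)$; hence $\mathrm{diam}(G_n)\leq \mathrm{diam}(G_{n-1})$. Likewise, eventual injectivity pushes any cycle in $G_{n-1}$ to a cycle of the same length in $G_n$, so girth is weakly decreasing. A weakly decreasing sequence of positive integers (or $\infty$) stabilizes, and that is the entire proof.

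Your approach instead builds an ascending chain of $\FI$-submodules $P_\ell\subseteq \R\binom{V(G_\dt)}{2}$, uses Noetherianity to extract a uniform bound $L$, and then invokes eventual polynomiality to decide each threshold ``$\mathrm{diam}\leq\ell$'' and ``$G_n$ contains an $m$-cycle'' one at a time. This is heavier machinery, but it has the virtue of producing an explicit uniform bound $L$ valid for \emph{all} $n$ (not just $n\gg 0$), and the girth bound $2L+1$ follows from it; the paper's monotonicity argument gives no such bound before stabilization. On the other hand, the paper's proof is essentially two sentences and uses nothing beyond $r$-vertex stability and eventual injectivity, avoiding any appeal to polynomiality of dimensions or the ascending chain condition.
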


\begin{proof}
For both statements, it suffices to show that the relevant invariant is eventually weakly decreasing in $n$. If $n \gg 0$, and $u,v \in V(G_n)$, then by Theorem \ref{fgprops} there exists $x,y \in V(G_{n-1})$ and an injection $f:[n-1] \hookrightarrow [n]$ such that $u = G(f)(x), v = G(f)(y)$. In particular, if $P$ is any path in $G_{n-1}$ connecting $x$ and $y$, then $f_\as(P)$ is a path in $G_n$ connecting $u$ to $v$. This shows that the shortest path between $u$ and $v$ cannot be longer than the shortest path between $x$ and $y$. By definition, the diameter of $G_n$ cannot be bigger than the diameter of $G_{n-1}$. A similar argument works for girth.\\
\end{proof}

\subsection{Topological consequences of vertex-stability}

In this section we consider a collection of topological applications of vertex-stability. Our first applications are simple consequences of the work in the previous section, as well as facts from our background sections.\\

\begin{lemma} \label{homcomplexfg}
Let $T$ be a graph, $G_\dt$ a vertex-stable $\FI$-graph of stable degree $\leq d$, and $\C^T_{n,i}$ denote the free $\Z$-module with basis indexed by the $i$-simplices of $\fiHom(T,G_n)$. Then $\C^T_{\dt,i}$ can be endowed with the structure of a finitely generated $\FI$-module over $\Z$ which is $((i+1)d\cdot |V(T)|)$-small.\\
\end{lemma}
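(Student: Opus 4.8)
The plan is to realize $\C^T_{\dt,i}$ as a subquotient of a tensor power of $\R V(G_\dt)$ (here over $\Z$, but the same combinatorics applies) and then invoke the Noetherian property together with Proposition \ref{tensorgen}. First I would unwind the definition of the $\Hom$-complex: an $i$-simplex of $\fiHom(T,G_n)$ is a multi-homomorphism $\alpha: V(T) \to \Par(V(G_n)) - \emptyset$ with $\sum_{x \in V(T)}(|\alpha(x)| - 1) = i$, i.e.\ the total number of "extra" vertices beyond one per vertex of $T$ is exactly $i$. Such a datum is determined by, for each $x \in V(T)$, a nonempty subset $\alpha(x) \subseteq V(G_n)$, subject to the edge condition. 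In particular, an $i$-simplex uses at most $|V(T)| + i$ vertices of $G_n$ in total (counted with multiplicity, exactly $|V(T)| + i$).

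The key step is to build an injection of $\FI$-modules (over $\Z$)
\[
\C^T_{\dt,i} \hookrightarrow \left(\Z V(G_\dt)\right)^{\otimes (|V(T)| + i)}.
\]
To do this I would fix once and for all a total order on $V(T)$ and, for each vertex $x \in V(T)$, a way of listing the elements of $\alpha(x)$ in a chosen order; then an $i$-simplex $\alpha$ maps to the pure tensor obtained by concatenating these lists across all $x \in V(T)$, padding is unnecessary since the total length is exactly $|V(T)|+i$. Because the transition maps of $G_\dt$ are eventually injective (Theorem \ref{fgprops}), for $n \gg 0$ distinct vertices stay distinct, so this assignment is injective and compatible with the $\FI$-structure; as in the proofs of Theorem \ref{enumthm} and Theorem \ref{walks}, one simply declares $\C^T_{n,i} = 0$ for the finitely many small $n$ before injectivity takes effect, which does not affect finite generation. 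One must be slightly careful that the map is well-defined independent of nothing — but since we fixed orderings, the image pure tensor is canonically determined by $\alpha$, and the edge condition of the multi-homomorphism is simply inherited (it cuts out a sub-$\FI$-set of the set of all tuples of vertices), so $\C^T_{\dt,i}$ is literally the free $\Z$-module on a sub-$\FI$-set of tuples, hence an $\FI$-submodule of the permutation module on all such tuples, which is $\left(\Z V(G_\dt)\right)^{\otimes(|V(T)|+i)}$.

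Then I would finish: by Lemma \ref{fimodchar}, $\Z V(G_\dt)$ is finitely generated in degree $\leq d$; by Proposition \ref{tensorgen}(2), its $(|V(T)|+i)$-fold tensor power is finitely generated in degree $\leq (|V(T)|+i)\cdot d$. Now $\C^T_{\dt,i}$ is a submodule of this, so by the Noetherian property (Snowden; Church--Ellenberg--Farb) it is finitely generated, and being a submodule of something finitely generated in degree $\leq (|V(T)|+i)d$ it is $((|V(T)|+i)d)$-small by definition. Finally I would reconcile the bound with the stated $((i+1)d\cdot|V(T)|)$-small: since $|V(T)| \geq 1$ we have $|V(T)| + i \leq |V(T)| + i|V(T)| = (i+1)|V(T)|$, so a $((|V(T)|+i)d)$-small module is a fortiori $((i+1)d\cdot|V(T)|)$-small, giving exactly the claim. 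The main obstacle — really the only subtle point — is setting up the injection $\C^T_{\dt,i} \hookrightarrow (\Z V(G_\dt))^{\otimes(|V(T)|+i)}$ cleanly as a map of $\FI$-modules (handling the ordering choices and the small-$n$ degeneration), since everything afterward is a formal application of results already in hand.
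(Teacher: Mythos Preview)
Your overall strategy is right, and in fact the bound $(|V(T)|+i)d$ you obtain along the way is sharper than the one stated. However, the embedding you describe into $(\Z V(G_\dt))^{\otimes(|V(T)|+i)}$ does not work as a map of $\FI$-modules, and this is precisely the ``subtle point'' you flag but do not actually resolve. The phrase ``a way of listing the elements of $\alpha(x)$ in a chosen order'' amounts to fixing a total order on $V(G_n)$ for each $n$, and the transition maps $G(f)$ have no reason to respect these orders. Concretely, if $\alpha(x) = \{v,w\}$ with $v < w$ in your order on $V(G_n)$, it may happen that $f_\as(w) < f_\as(v)$ in $V(G_{n+1})$; then the image under $f_\as$ of your pure tensor is not the pure tensor you would assign to $f_\as\alpha$, so the squares do not commute. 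There is a second, related problem: even at a single $n$ your map need not be injective, since the concatenated tuple $(v_1,\ldots,v_{|V(T)|+i})$ does not record where one block $\alpha(x)$ ends and the next begins (e.g.\ with $|V(T)|=2$, $i=1$, the tuple $a\otimes b\otimes c$ could come from $\alpha(x_1)=\{a,b\},\ \alpha(x_2)=\{c\}$ or from $\alpha(x_1)=\{a\},\ \alpha(x_2)=\{b,c\}$).

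The paper avoids both issues by never ordering the sets $\alpha(x)$: it embeds
\[
\C^T_{\dt,i} \hookrightarrow \left(\bigoplus_{j=1}^{i+1}\Z\binom{V(G_\dt)}{j}\right)^{\otimes |V(T)|},
\]
sending $\alpha$ to $\bigotimes_{x \in V(T)} \alpha(x)$ with each $\alpha(x)$ regarded as a basis element of $\Z\binom{V(G_\dt)}{|\alpha(x)|}$. This is manifestly $\FI$-equivariant and injective, and gives the bound $(i+1)d\cdot|V(T)|$. Your sharper bound can be salvaged by the same idea: first split $\C^T_{\dt,i}$ according to the composition $(j_x)_{x\in V(T)} = (|\alpha(x)|)_{x\in V(T)}$, and then for each fixed composition embed into $\bigotimes_{x} \Z\binom{V(G_\dt)}{j_x}$, which is generated in degree $\leq \sum_x j_x d = (|V(T)|+i)d$.
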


\begin{proof}
We first recall the definition of the Hom-complex $\fiHom(T,G_n)$. The simplicies of $\fiHom(T,G_n)$ are multi-homomorphisms, and $\alpha$ is a face of $\tau$ if and only if $\alpha(x) \subseteq \tau(x)$ for all $x \in V(T)$. It is clear that the transition maps of $G_\dt$ induce the transition maps of $\C^T_{\dt,i}$, turning this collection of abelian groups into an $\FI$-module over $\Z$.\\

We have that $i$-simplices correspond to multi-homomorphisms with
\[
\sum_{x \in V(T)}|\alpha(x)| =|V(T)| + i
\]
The data of an $i$-simplex can therefore be encoded as an $|V(T)|$-tuple
\[
(\alpha_x)_{x \in V(T)}
\]
such that:

\begin{itemize}
\item $\alpha_x$ is a non-empty subset of $V(G_n)$;
\item $\sum_{x \in V(T)} |\alpha_x| = |V(T)|+i$;
\item if $\{x,y\} \in E(T)$ then for all $v \in \alpha_x$ and $w \in \alpha_y$, $\{v,w\} \in E(G_n)$.
\end{itemize}

Just as we have done previously, such as in the proof of Theorem \ref{fgprops}, we may realize $\C^T_{\dt,i}$ as a submodule of
\[
\left(\bigoplus_{j=1}^{i+1}\R\binom{V(G_\dt)}{j}\right)^{\otimes V(T)}
\]
The Noetherian property, as well as previous discovered facts about the modules $\binom{V(G_\dt)}{j}$ (see the proof of Theorem \ref{fgprops}) imply our lemma.\\
\end{proof}

Lemma \ref{homcomplexfg} is the main tool we will need in proving that Hom-complexes of vertex-stable $\FI$-graphs are representation stable in the sense of Church and Farb. Before we get to this theorem, we observe the following consequence of the above in terms of counting homomorphisms into $G_\dt$.\\

\begin{corollary}\label{chrompoly}
Let $T$ be any graph, and let $G_\dt$ be a vertex-stable $\FI$-graph of stable degree $\leq d$. Then for $n \gg 0$, then the function
\[
n \mapsto |\Hom(T,G_\dt)|
\]
agrees with a polynomial of degree $\leq d\cdot|V(T)|$.\\
\end{corollary}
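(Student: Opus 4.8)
The plan is to reduce the counting of honest graph homomorphisms $T \to G_n$ to a question about the dimensions of an appropriate $\FI$-module, and then invoke the polynomiality statement in Theorem \ref{repstab}(\ref{poly}). A graph homomorphism $\phi: T \to G_n$ is exactly a multi-homomorphism $\alpha: V(T) \to \Par(V(G_n)) - \emptyset$ all of whose values $\alpha(x)$ are singletons; equivalently, $\phi$ is a $0$-simplex of $\fiHom(T, G_n)$. So the count $|\Hom(T, G_n)|$ is precisely the number of $0$-simplices of $\fiHom(T, G_n)$, which in the notation of Lemma \ref{homcomplexfg} is the rank of $\C^T_{n,0}$.

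First I would set $i = 0$ in Lemma \ref{homcomplexfg}, which says that $\C^T_{\dt, 0}$ is a finitely generated $\FI$-module over $\Z$ that is $(d \cdot |V(T)|)$-small. Base-changing to $\R$ (which preserves finite generation, smallness, and the rank in each degree), we get a finitely generated $\FI$-module over $\R$, say $\C^T_{\dt,0} \otimes \R$, which is $(d \cdot |V(T)|)$-small, with $\dim_\R (\C^T_{n,0} \otimes \R) = |\Hom(T, G_n)|$ for all $n$ past the point where the transition maps of $G_\dt$ are injective and induced (before that we have declared the module to be zero, but since the conclusion is only asymptotic this is harmless). Then Theorem \ref{repstab}(\ref{poly}) applies directly: since $\C^T_{\dt,0} \otimes \R$ is $(d \cdot |V(T)|)$-small, there is a polynomial $p(X) \in \Q[X]$ of degree at most $d \cdot |V(T)|$ with $p(n) = \dim_\R(\C^T_{n,0} \otimes \R) = |\Hom(T, G_n)|$ for all $n \gg 0$, which is exactly the claim.

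The only point that requires a moment of care is the identification of $\Hom(T, G_n)$ with the $0$-simplices of $\fiHom(T, G_n)$: one should check that the simplicial-complex convention in Definition \ref{homcomplexdef} really records the singleton-valued multi-homomorphisms as vertices of the complex, and that a singleton-valued map $\alpha$ satisfies the multi-homomorphism edge condition if and only if the induced vertex map $x \mapsto$ (the element of $\alpha(x)$) is a graph homomorphism $T \to G_n$ — this is immediate from the definitions. There is no real obstacle here; the work has all been done in Lemma \ref{homcomplexfg}, and this corollary is essentially the observation that its $i=0$ case, combined with the dimension-polynomiality of small $\FI$-modules, yields the desired chromatic-polynomial-type statement. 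One could also remark, as the surrounding text does, that taking $G_\dt = K_\dt$ recovers the classical chromatic polynomial of $T$, with its degree bound $|V(T)|$ matching $d \cdot |V(T)|$ for $d = 1$.
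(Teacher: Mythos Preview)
Your proposal is correct and follows the paper's own argument essentially verbatim: the paper's proof simply cites Lemma~\ref{homcomplexfg} at $i=0$, the identification of the basis of $\C^T_{n,0}$ with $\Hom(T,G_n)$, and Theorem~\ref{repstab}. You have spelled out a couple of details the paper leaves implicit (the base change from $\Z$ to $\R$ and the explicit check that $0$-simplices of $\fiHom(T,G_n)$ are graph homomorphisms), but the approach is identical.
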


\begin{proof}
This follows from Theorem \ref{repstab}, Lemma \ref{homcomplexfg}, as well as the fact that the module $\C^T_{n,0}$ from Lemma \ref{homcomplexfg} has basis indexed by $\Hom(T,G_n)$.\\
\end{proof}

\begin{remark}
It is easily seen that homomorphisms $\Hom(T,K_n)$ are in bijection with vertex colorings of $T$ for which no adjacent vertices are of the same color. The above corollary therefore recovers the existence of the so-called \textbf{chromatic polynomial}. Note that the chromatic polynomial exists for all $n \geq 0$, while the above only guarantees it for $n \gg 0$. One can recover the fact that the chromatic polynomial exists for $n \geq 0$ by showing that the collection of vector spaces $\R \Hom(T,K_\dt)$ can be endowed with the structure of an $\FI_\sharp$-module (see \cite{CEF}).\\

Note that a similar idea, i.e. using $\FI$-module techniques to recover the chromatic polynomial, was conveyed to the authors by John Wiltshire-Gordon and Jordan Ellenberg. This alternative technique was very similar in spirit, but used FA-modules instead of $\FI$-modules. Here, FA is the category of finite sets and all maps (see, for instance, \cite{WG}).\\
\end{remark}

\begin{theorem}\label{homcomrepstab}
Let $T$ be a graph, $G_\dt$ a vertex-stable $\FI$-graph of stable degree $\leq d$, and let $i \geq 0$ be a fixed integer. Then the $\FI$-module over $\Z$
\[
H_i(\fiHom(T,G_\dt))
\]
is $((i+1)d\cdot |V(T)|)$-small.\\
\end{theorem}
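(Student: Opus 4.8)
The plan is to exhibit $H_i(\fiHom(T,G_\dt))$ as a subquotient of the chain module $\C^T_{\dt,i}$ from Lemma~\ref{homcomplexfg}, and then to observe that $d$-smallness is inherited by subquotients. So the work is almost entirely bookkeeping once two things are in place: that the simplicial boundary operators are morphisms of $\FI$-modules, and that subquotients of $d$-small modules are $d$-small.

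First I would check that the simplicial boundary operators assemble into morphisms of $\FI$-modules over $\Z$, $\partial \colon \C^T_{\dt,i} \to \C^T_{\dt,i-1}$. By Lemma~\ref{homcomplexfg} the transition map of $\C^T_{\dt,i}$ attached to an injection $f$ is the one induced by the graph homomorphism $G(f)$, which sends a multi-homomorphism $\alpha$ to the multi-homomorphism $x \mapsto G(f)(\alpha(x))$. Since a homomorphism of simple graphs cannot collapse an edge, this image is always a well-defined multi-homomorphism; it has the same dimension as $\alpha$ when $G(f)$ is injective on the relevant vertex sets, and strictly smaller dimension otherwise. With the standard convention that a simplicial map acts as $0$ on chains of a simplex whose dimension it lowers, $G(f)$ induces a chain map, so $\partial$ commutes with every transition map. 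Hence $(\C^T_{\dt,\bullet},\partial)$ is a chain complex in the abelian category of $\FI$-modules over $\Z$, and because kernels and cokernels of $\FI$-modules are computed pointwise, its $i$-th homology is exactly the $\FI$-module $n \mapsto H_i(\fiHom(T,G_n);\Z)$, i.e.\ $H_i(\fiHom(T,G_\dt))$.

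Next I would run the bookkeeping. Lemma~\ref{homcomplexfg} asserts that $\C^T_{\dt,i}$ is $((i+1)d\cdot|V(T)|)$-small, meaning it is a subquotient of some $\FI$-module $M$ finitely generated in degree $\leq (i+1)d\cdot|V(T)|$. Now $H_i(\fiHom(T,G_\dt)) = \ker\partial/\im\partial$ is a subquotient of $\C^T_{\dt,i}$, and a subquotient of a subquotient of $M$ is again a subquotient of $M$; so $H_i(\fiHom(T,G_\dt))$ is a subquotient of $M$. The submodule of $M$ appearing as the numerator of this subquotient is finitely generated, since $\FI$ is Noetherian over $\Z$ (the Snowden--Church--Ellenberg--Farb theorem, which is valid over any Noetherian coefficient ring, as noted for $\Z$ in the remarks on $\FI$-modules above), and hence $H_i(\fiHom(T,G_\dt))$ is finitely generated. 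Being in addition a subquotient of $M$, it is $((i+1)d\cdot|V(T)|)$-small, which is the claim.

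The only step that is not pure formalism is the verification that the transition maps $G(f)$ genuinely induce chain maps on $\C^T_{\dt,\bullet}$; the subtlety is that a non-injective $G(f)$ can shrink some $\alpha(x)$ and thereby lower simplicial dimension, so one must check that the ``degenerate simplices go to zero'' convention is compatible with $\partial$. I expect this to be the main --- though still routine --- point to nail down. Alternatively one can sidestep it entirely: by Theorem~\ref{fgprops} we may replace $G_\dt$ by an $\FI$-graph with injective transition maps agreeing with it for $n \gg 0$, which changes neither finite generation nor the degree bound, and after this replacement every induced simplicial map preserves dimension, making the boundary maps visibly morphisms of $\FI$-modules.
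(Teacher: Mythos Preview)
Your proposal is correct and follows essentially the same approach as the paper: assemble the chain groups $\C^T_{\dt,i}$ into a complex of $\FI$-modules over $\Z$, invoke Lemma~\ref{homcomplexfg} for the smallness bound on $\C^T_{\dt,i}$, and conclude via Noetherianity that the homology is a subquotient of the required degree. If anything, you are more careful than the paper, which dismisses the compatibility of $\partial$ with the transition maps as ``not hard to show'' without addressing the non-injective case you flag (and then resolve).
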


\begin{proof}
Recall the groups $\C^T_{n,i}$ from the Lemma \ref{homcomplexfg}. Standard simplicial homology informs us that there is a complex,
\[
\C^T_{n,\star}: \ldots \rightarrow \C^T_{n,i} \stackrel{\partial}{\rightarrow} \C^T_{n,i-1} \rightarrow \ldots \rightarrow \C^T_{n,0} \rightarrow 0
\]
with homology isomorphic to $H_\star(\fiHom(T,G_n))$. Lemma \ref{homcomplexfg} tells us that for each fixed $i$ the groups $\C^T_{\dt,i}$ form a finitely generated $\FI$-module over $\Z$. It isn't hard to show that the action of the transition maps of $\C^T_{\dt,i}$ commute with the differentials $\partial$. It follows that there is a complex of $\FI$-modules over $\Z$
\[
\C^T_{\dt,\star}: \ldots \rightarrow \C^T_{\dt,i} \stackrel{\partial}{\rightarrow} \C^T_{\dt,i-1} \rightarrow \ldots \rightarrow \C^T_{\dt,0} \rightarrow 0
\]
whose homology agrees with the $\FI$-modules $H_i(\fiHom(T,G_\dt))$. The Noetherian property and Lemma \ref{homcomplexfg} imply our result.\\
\end{proof}

Given an $\FI$-simplicial-complex $X_\dt$, work of Dochtermann \cite{Do2} implies that $X_\dt$ can actually be realized as a Hom-complex of some $\FI$-graph $G_\dt$. That work also constructs the graphs $G_n$ explicitly. The above theorem therefore reduces the problem of determining whether the homology groups of $X_\dt$ are representation stable to the combinatorial problem of determining whether an $\FI$-graph is vertex-stable. We will later see another simple condition with which one can use to determine whether the homology of an $\FI$-simplicial complex is representation stable (see Corollay \ref{repstabcom}).\\

To conclude this section we review some fundamental concepts and definitions which will be used in the proof of Theorem \ref{repstabconf}.\\

For the remainder of this section, we fix a vertex-stable, torsion-free $\FI$-graph $G_\dt$ as well as a positive integer $m$. We will assume that $G_\dt$ has stable degree $\leq d$ and edge-stable degree $\leq d_E$.\\

To begin, we note that the necessary edge subdivisions of Theorem \ref{cellularmodel} can be accomplished in a way consistent with the $\FI$-module structure of $G_\dt$.\\

\begin{proposition}\label{subdivfi}
There exists an $\FI$-graph, $G^{(m)}_\dt$, for which $G^{(m)}_n$ is the $m$-th subdivision of $G_n$ and for any injection of sets $f:[n] \hookrightarrow [r]$ one has
\[
G^{(m)}(f)(x) = G(f)(x)
\]
for all $x \in V(G_n)$. If $G_\dt$ has stable degree $\leq d$ and edge-stable degree $\leq d_E$, then $G^{(m)}_\dt$ has stable degree $\leq \max\{d,d_E\}$ and edge-stable degree $\leq d_E$.\\
\end{proposition}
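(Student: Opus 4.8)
\textbf{Proof proposal for Proposition \ref{subdivfi}.}

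The plan is to construct $G^{(m)}_\dt$ explicitly by giving a functorial description of the vertices and edges of each $m$-th subdivision, and then to read off the stable and edge-stable degrees from the construction. First I would describe $V(G^{(m)}_n)$ as the disjoint union of $V(G_n)$ together with, for each edge $e = \{v,w\} \in E(G_n)$, a set of $m-1$ new ``interior'' vertices; to make this canonical one fixes an orientation convention (say, order each edge using a fixed total order on $V(G_n)$ coming from the labels, and list the interior vertices as $e^{(1)},\dots,e^{(m-1)}$ from the smaller endpoint to the larger). The edge set of $G^{(m)}_n$ then consists of the ``segments'' $\{v, e^{(1)}\}, \{e^{(1)},e^{(2)}\},\dots,\{e^{(m-1)},w\}$ for each $e \in E(G_n)$. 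The key point is that since $G_\dt$ is torsion-free, every transition map $G(f)$ is injective on vertices and hence sends edges to edges bijectively onto their images and respects the chosen orientations (an injection of label sets is order-preserving on the relevant subsets), so one may define $G^{(m)}(f)$ to be $G(f)$ on $V(G_n) \subseteq V(G^{(m)}_n)$ and to send $e^{(j)} \mapsto (G(f)(e))^{(j)}$ on interior vertices. Functoriality of this assignment is immediate from functoriality of $G_\dt$ and from the fact that composing injections of label sets composes the orientation data compatibly.

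Next I would verify this is a well-defined graph homomorphism: each segment of $e$ maps to the corresponding segment of $G(f)(e)$, so edges go to edges, and injectivity of $G(f)$ on $V(G_n)$ plus the disjointness of the interior-vertex families for distinct edges guarantees $G^{(m)}(f)$ is injective on $V(G^{(m)}_n)$ as well, so $G^{(m)}_\dt$ is again torsion-free. By construction $G^{(m)}(f)(x) = G(f)(x)$ for $x \in V(G_n)$, which is the required compatibility. (The torsion-free hypothesis is genuinely used here: if $G(f)$ identified two vertices of an edge, there would be no consistent way to subdivide, and indeed the statement of the proposition implicitly lives in the torsion-free setting fixed just before it.)

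It remains to compute the two degrees. For the stable degree: let $n \geq \max\{d, d_E\}$ and take a vertex $u \in V(G^{(m)}_{n+1})$. If $u \in V(G_{n+1})$, then since $n \geq d$ there is a vertex $w \in V(G_n)$ and an injection $f$ with $G(f)(w) = u$, hence $G^{(m)}(f)(w) = u$. If instead $u = e^{(j)}$ for some $e \in E(G_{n+1})$, then since $n \geq d_E$ there is an edge $e' \in E(G_n)$ and an injection $f:[n]\hookrightarrow[n+1]$ with $G(f)(e') = e$ (as an edge, endpoints to endpoints); then $G^{(m)}(f)\big((e')^{(j)}\big) = e^{(j)} = u$. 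So every vertex of $G^{(m)}_{n+1}$ is hit, giving stable degree $\leq \max\{d,d_E\}$. For the edge-stable degree: the edges of $G^{(m)}_n$ are exactly the segments of edges of $G_n$; for $n \geq d_E$ every edge $e$ of $G_n$ is the image of an edge $e'$ of $G_{d_E}$ under some injection $f$, and then the $j$-th segment of $e$ is the image under $G^{(m)}(f)$ of the $j$-th segment of $e'$, which is an edge of $G^{(m)}_{d_E}$. Hence $G^{(m)}_\dt$ is edge-stable in degree $\leq d_E$.

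I do not expect a serious obstacle here; the one point that needs genuine care — and which I would flag as the ``main subtlety'' rather than a hard step — is pinning down the orientation/labelling convention for interior vertices so that the assignment $e^{(j)} \mapsto (G(f)(e))^{(j)}$ is literally functorial and not just well-defined up to reversing each edge. Fixing interior vertices by the total order on the label set of $G_n$ (which transition maps respect, being induced by injections of $[n]$) handles this cleanly.
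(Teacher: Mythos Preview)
Your approach is exactly the paper's --- the paper's proof is a two-sentence sketch (``follows from torsion-freeness and the definition of subdivision; stable degrees follow because subdivision creates new vertices and edges within existing edges''), and you are filling in precisely those details. Your stable-degree and edge-stable-degree computations at the end are correct.

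However, the orientation fix you propose does not work. You order each edge by a total order on $V(G_n)$ ``coming from the labels'' and then assert that transition maps respect this because ``an injection of label sets is order-preserving.'' Neither part holds: for a general $\FI$-graph the set $V(G_n)$ carries no canonical total order (its vertices need not be elements of $[n]$), and morphisms in $\FI$ are arbitrary injections, not order-preserving ones. Concretely, take $G_\dt = K_\dt$, $m \geq 3$, and $f:[2]\hookrightarrow[3]$ with $f(1)=2$, $f(2)=1$. Your rule sends vertex $1\in K_2$ to vertex $2\in K_3$ but sends the adjacent interior vertex $e^{(1)}$ to $e^{(1)}$ in $K_3^{(m)}$, which is adjacent to vertex $1$, not $2$; so the proposed $G^{(m)}(f)$ is not even a graph homomorphism.

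The clean fix is to drop the global order and label interior vertices symmetrically in the two endpoints: for the edge $\{v,w\}$, name the interior vertex at graph-distance $j$ from $v$ by the unordered pair $\{(v,j),(w,m-j)\}$, and let $G^{(m)}(f)$ send it to $\{(G(f)(v),j),(G(f)(w),m-j)\}$. This is well-defined independent of which endpoint you measure from, is a graph homomorphism on each subdivided edge, and is functorial because $G_\dt$ is. With this correction in place, the rest of your argument goes through unchanged. (The paper's terse proof suppresses this point entirely, so you were right to flag it as the one real subtlety --- you just chose the wrong resolution.)
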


\begin{proof}
The existence of $G^{(m)}$ from the fact that $G_\dt$ is torsion-free and the definition of the $m$-th subdivision. The statement on stable degrees follows from the fact that subdivision creates new vertices and edges within existing edges.\\
\end{proof}

This proposition will prove to be critical for us, as it essentially asserts, with Theorem \ref{cellularmodel}, that there exists a combinatorial model of $\Conf_m(G_\dt)$ which interacts nicely with the $\FI$-graph structure of $G_\dt$.\\

We are now ready to provide the main novel computational construction of this section. Recall that we have fixed a vertex-stable torsion-free $\FI$-graph $G_\dt$.\\

\begin{definition}
Fix integers $m,n,i \geq 0$. We write $\K_{n,m,i}$ to denote the free $\Z$-module with basis vectors indexed by the $i$-dimensional cells of the cubical complex $\DConf_m(G^{(m)}_n)$. Given any injection of sets $f:[n] \hookrightarrow [r]$, Proposition \ref{subdivfi} implies that the transition map $G(f)$ induces a transition map $G^{(m)}_n \rightarrow G^{(m)}_r$, which, in turn, induces a map
\[
f_\as: \K_{n,m,i} \rightarrow \K_{r,m,i}.
\]
This procedure equips the family $\{\K_{n,m,i}\}_n$ with the structure of an $\FI$-module over $\Z$.\\
\end{definition}

Having observed the $\FI$-module structure on the families $\K_{\dt,m,i}$, the strategy of our proof of Theorem \ref{repstabconf} becomes clear. We begin by proving that, for all choices of $m$ and $i$, the $\FI$-module $\K_{\dt,m,i}$ is finitely generated. In fact, we will prove that $\K_{\dt,m,i}$ is $(\max\{d,d_E\}(m-i) + d_Ei)$-small, where $d$ is the stable degree of $G_\dt$. Following this, one notes that the action of $\FI$ on the collection $\{K_{n,m,i}\}_n$ clearly commutes with the usual differentials
\[
\partial_{n,i,m}: \K_{n,m,i} \rightarrow \K_{n,m,i-1}.
\]
This implies that the collection of complexes
\[
\ldots \rightarrow \K_{n,m,i} \rightarrow \K_{n,m,i-1} \rightarrow \ldots \rightarrow \K_{n,m,0} \rightarrow 0
\]
can be pieced together to form a complex of $\FI$-modules. The Noetherian property is then sufficient for us to prove the main theorem.\\

We observe that this approach has the downside that it cannot be used to estimate the generating degree of the $\FI$-module over $\Z$, $H_i(\Conf_m(G_\dt))$. It is the belief of the authors that proving a result of this kind will require a deeper topological understanding of the spaces $\Conf_m(G_n)$ as $n$-varies. This seems like a rich avenue for future research, as surprisingly little is thus far understood about these spaces.\\

\begin{theorem}\label{strongconf}
Assume that $G_\dt$ has stable degree $\leq d$ and edge-stable degree $\leq d_E$. Then for all choices of $m,i \geq 0$, the $\FI$-module over $\Z$, $\K_{\dt,m,i}$, is $(\max\{d,d_E\}(m-i) + d_Ei)$-small.\\
\end{theorem}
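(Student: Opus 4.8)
The plan is to realize $\K_{\dt,m,i}$ as a submodule of an explicit $\FI$-module built out of the vertex and edge modules $\R V(G_\dt)$ and $\R E(G_\dt)$, whose generating degree we already control, and then invoke the Noetherian property together with Proposition \ref{tensorgen}. Recall from Theorem \ref{cellularmodel} that an $i$-cell of $\DConf_m(G^{(m)}_n)$ is a product $\sigma_1 \times \cdots \times \sigma_m$, where each $\sigma_j$ is a vertex or an edge of $G^{(m)}_n$, exactly $i$ of the $\sigma_j$ are edges, and the cells are pairwise disjoint (in the strong sense $\sigma_j \cap \partial(\sigma_k) = \emptyset$ for $j \neq k$). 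So an $i$-cell is encoded by an $m$-tuple in which $m-i$ slots carry a vertex of $G^{(m)}_n$ and $i$ slots carry an edge of $G^{(m)}_n$, subject to the disjointness constraint. Since the transition maps are torsion-free by hypothesis and, by Proposition \ref{subdivfi}, extend to the subdivision $G^{(m)}_\dt$, the map $f_\as$ sends such a tuple to such a tuple, and the disjointness condition is preserved because transition maps are injective. Thus, forgetting the disjointness constraint, we get an injection of $\FI$-modules over $\Z$
\[
\K_{\dt,m,i} \hookrightarrow \bigoplus_{\substack{S \subseteq [m]\\ |S| = i}} \left(\bigotimes_{j \in S} \Z E(G^{(m)}_\dt)\right) \otimes \left(\bigotimes_{j \notin S} \Z V(G^{(m)}_\dt)\right).
\]

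The next step is to bound the generating degrees of the pieces $\Z V(G^{(m)}_\dt)$ and $\Z E(G^{(m)}_\dt)$. By Proposition \ref{subdivfi}, $G^{(m)}_\dt$ has stable degree $\leq \max\{d, d_E\}$ and edge-stable degree $\leq d_E$; by Lemma \ref{fimodchar} (and Remark \ref{small}), $\Z V(G^{(m)}_\dt)$ is generated in degree $\leq \max\{d, d_E\}$ and $\Z E(G^{(m)}_\dt)$ is generated in degree $\leq d_E$. Applying Proposition \ref{tensorgen}, each summand indexed by an $i$-subset $S$ is generated in degree $\leq \max\{d,d_E\}(m-i) + d_E i$, hence so is the finite direct sum, and hence (as a submodule) $\K_{\dt,m,i}$ is $(\max\{d,d_E\}(m-i) + d_E i)$-small by the Noetherian theorem. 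One subtlety to handle carefully: the $\FI$-module structure on $\K_{\dt,m,i}$ is only unambiguously defined once the transition maps of the underlying graph are injective, but torsion-freeness is assumed throughout this section, so this is not an issue here; if one wanted the statement for merely vertex-stable $G_\dt$, one would set $\K_{n,m,i} = 0$ below the point where injectivity kicks in, exactly as in the proof of Theorem \ref{walks}.

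The main obstacle I anticipate is not the finite-generation bookkeeping but verifying that the embedding above is genuinely a map of $\FI$-modules — i.e. that the disjointness/boundary-avoidance condition defining $\DConf$ is stable under the transition maps and that $f_\as$ really does land in the span of cells rather than producing degenerate tuples. This requires knowing that a transition map $G^{(m)}(f)$ carries distinct vertices to distinct vertices and non-incident cell-pairs to non-incident cell-pairs; the first is torsion-freeness and the second follows because an injective graph morphism cannot create an incidence that was not present, but the argument should be written out since $\DConf$'s condition ($\sigma_j \cap \partial(\sigma_k) = \emptyset$) is slightly stronger than mere disjointness of the closed cells. Once that compatibility is pinned down, the rest is a direct application of Proposition \ref{tensorgen} and Noetherianity exactly as in the proofs of Theorems \ref{fgprops}, \ref{enumthm}, and \ref{homcomrepstab}.
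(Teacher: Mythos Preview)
Your proof is correct and follows essentially the same approach as the paper: both embed $\K_{\dt,m,i}$ into the direct sum over $i$-subsets $S\subseteq [m]$ of the tensor products $\bigotimes_{j\in S}\R E(G^{(m)}_\dt)\otimes\bigotimes_{j\notin S}\R V(G^{(m)}_\dt)$, bound the generating degree of each summand via Propositions \ref{subdivfi} and \ref{tensorgen}, and conclude by Noetherianity. Your treatment is in fact more careful than the paper's, which simply asserts the submodule inclusion without discussing why the disjointness condition is preserved under transition maps.
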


\begin{proof}
We first define $\R E(G_\dt)$ to be the $\FI$-module of edges of $G_\dt$. It follows from definition that $\R V(G_\dt)$ is generated in degrees $\leq d$, while $\R E(G_\dt)$ is generated in degrees $\leq d_E$. Propositions \ref{tensorgen} and \ref{subdivfi} imply that the $\FI$-module
\[
Q_i := \bigoplus_{f:[n] \rightarrow \{V,E\}, |f^{-1}(E)| = i}\bigotimes_{j=1}^n Q_{f,j}
\]
is generated in degrees $\leq \max\{d,d_E\}(m-i) + d_Ei$, where,
\[
Q_{f,j} = \begin{cases} \R E(G^{(m)}_\dt) &\text{ if $f(j) = E$}\\ \R V(G^{(m)}_\dt) &\text{ otherwise.}\end{cases}
\]

It follows from definition that $\K_{\dt,m,i}$ is a submodule of $Q_i$, whence it is $(\max\{d,d_E\}(m-i) + d_Ei)$-small. This concludes the proof.\\
\end{proof}

\begin{remark}
In most cases of interest, $\max\{d,d_E\} = d_E$. Indeed, this will be the case so long as the graphs $G_n$ are all connected. The above theorem therefore implies that the homology groups $H_i(\Conf_m(G_n))$ have Betti numbers which agree with a polynomial of degree $\leq d_Em$ for $n \gg 0$ whenever $G_\dt$ is connected. Remark \ref{small} then implies this polynomial has degree $\leq 2dm$.\\
\end{remark}

\begin{example}
Let $G_\dt$ be the $\FI$-graph of Example \ref{lutex}. Then the above implies that the Betti numbers of $H_i(\Conf_m(G_n))$ eventually agree with a polynomial of degree $\leq m$. This bound is sharp for $m \geq 2, and i = 1$ in the case wherein $G$ is a single point, and $H$ is an edge. Namely, the case where $G_\dt = Star_\dt$ (see \cite{Gh} for this computation).\\
\end{example}

\subsection{Algebraic consequences of vertex-stability} \label{algap}

In this section, we consider adjacency and Laplacian matrices associated to an $\FI$-graph. We focus on properties of the eigenspaces associated to these matrices.\\

To begin, note that we may view the adjacency and Laplacian matrices of a graph $G$ as linear endomorphisms of $\R V(G)$. Given an $\FI$-graph $G_\dt$, it is unfortunately not the case that the collections $A_{G_\dt}$ and $L_{G_\dt}$ can be considered as endomorphisms of the $\FI$-module $\R V(G_\dt)$. Despite this, we will find that these matrices have some surprising interactions with the $\FI$-module structures. To begin, we have the following key observation.\\

\begin{lemma}\label{eigenrep}
Let $G_\dt$ be an $\FI$-graph. Then for each $n$ the matrices $A_{G_n}$ and $L_{G_n}$ commute with the action of $\Sn_n$. In particular, the eigenspaces of these matrices are sub-representations of $\R V(G_n)$.\\
\end{lemma}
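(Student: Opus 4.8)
The plan is to reduce the statement to the fact that the transition maps and the symmetric-group actions of $G_\dt$ commute, which is built into the definition of an $\FI$-graph as a functor. First I would fix $n$ and let $\sigma \in \Sn_n$, viewed as an automorphism of the graph $G_n$; it induces a linear automorphism $\sigma_\as$ of $\R V(G_n)$ permuting the basis $\{v : v \in V(G_n)\}$. The key point is that $\sigma$ sends edges to edges bijectively, i.e. $\{v,w\} \in E(G_n)$ if and only if $\{\sigma(v),\sigma(w)\} \in E(G_n)$, since $\sigma$ is a graph automorphism (functoriality on $\FI$ makes each endomorphism of $[n]$ act as such). From this, a direct computation with the definition $A_{G_n} v = \sum_{\{w,v\} \in E(G_n)} w$ gives
\[
\sigma_\as (A_{G_n} v) = \sum_{\{w,v\} \in E(G_n)} \sigma(w) = \sum_{\{w',\sigma(v)\} \in E(G_n)} w' = A_{G_n}(\sigma_\as v),
\]
where the middle equality is the change of variables $w' = \sigma(w)$ together with the edge-preservation property. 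Hence $\sigma_\as A_{G_n} = A_{G_n} \sigma_\as$ for all $\sigma$, so $A_{G_n}$ commutes with the $\Sn_n$-action.

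For the Laplacian $L_{G_n} = D_{G_n} - A_{G_n}$, it then suffices to treat the diagonal degree matrix $D_{G_n}$. Since a graph automorphism preserves degrees, $\mu(\sigma(v)) = \mu(v)$ for every vertex $v$, so $D_{G_n}$ acts on the basis vector $v$ by the scalar $\mu(v)$ and on $\sigma_\as v$ by the scalar $\mu(\sigma(v)) = \mu(v)$; this immediately yields $\sigma_\as D_{G_n} = D_{G_n} \sigma_\as$. Subtracting, $L_{G_n}$ commutes with the $\Sn_n$-action as well.

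Finally, for the ``in particular'' clause: if $M$ is any endomorphism of $\R V(G_n)$ commuting with the $\Sn_n$-action, and $E_\lambda = \ker(M - \lambda\, \mathrm{id})$ is an eigenspace, then for $u \in E_\lambda$ and $\sigma \in \Sn_n$ we have $(M - \lambda\,\mathrm{id})(\sigma_\as u) = \sigma_\as (M - \lambda\,\mathrm{id}) u = 0$, so $\sigma_\as u \in E_\lambda$; thus $E_\lambda$ is an $\Sn_n$-subrepresentation of $\R V(G_n)$. Applying this to $M = A_{G_n}$ and $M = L_{G_n}$ completes the proof. There is no real obstacle here — the only thing to be careful about is that the claimed ``matrix commutes with the group action'' is really the assertion that the corresponding \emph{operators} on $\R V(G_n)$ commute, which is exactly what the functoriality of an $\FI$-graph delivers; the content is essentially unpacking definitions.
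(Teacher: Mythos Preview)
Your proof is correct and follows essentially the same approach as the paper: the paper performs the identical change-of-variables computation for $A_{G_n}$, then remarks that ``the same proof works for the Laplacian matrix'' and that the eigenspace claim is a standard linear-algebra fact about commuting operators. Your write-up simply makes those two final remarks explicit.
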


\begin{proof}
For a fixed vertex $v \in V(G_n)$, we write $N(v)$ to denote the collection of vertices adjacent to $v$. Then,
\[
A_{G_n}v = \sum_{w \in N(v)} w.
\]
Therefore, if $\sigma \in \Sn_n$,
\[
A_{G_n}\sigma(v) = \sum_{w \in N(\sigma(v))} w = \sum_{\sigma(w'), w' \in N(\sigma(v))} \sigma(w') = \sigma(A_{G_n}v).
\]
The same proof works for the Laplacian matrix.\\

The second half of the lemma follows from basic linear algebra facts. Namely, if two matrices commute, then they preserve each others' eigenspaces.\\
\end{proof}

As an immediate consequence of Lemma \ref{eigenrep}, we obtain the following:

\begin{proposition}\label{boundedeval}
Let $G_\dt$ be a vertex-stable $\FI$-graph. Then there are constants $c_A$ and $c_L$, independent of $n$, such that the number of distinct eigenvalues of the adjacency matrix (resp. the Laplacian) of $G_n$ is bounded by $c_A$ (resp. $c_L$) for all $n$.\\
\end{proposition}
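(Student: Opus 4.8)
The plan is to deduce this directly from Lemma \ref{eigenrep} together with the stable-decomposition part of Theorem \ref{repstab}. Since $G_\dt$ is vertex-stable, Lemma \ref{fimodchar} shows that $\R V(G_\dt)$ is a finitely generated $\FI$-module, generated in some degree $\leq d$. Hence by Theorem \ref{repstab} there is an integer $N_0$ and multiplicities $m_\lambda$, all independent of $n$, so that for every $n \geq N_0$
\[
\R V(G_n) \;=\; \bigoplus_{\lambda,\; |\lambda|\leq d} m_\lambda\, V(\lambda)_n
\]
as $\Sn_n$-representations, with each $V(\lambda)_n$ irreducible (irreducibility holds once $n$ is large, which we absorb into $N_0$). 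In particular the Jordan--H\"older length of $\R V(G_n)$ equals $M := \sum_{|\lambda|\leq d} m_\lambda$, a bound independent of $n$.

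Now fix $n \geq N_0$ and let $A_n$ denote the adjacency matrix of $G_n$, viewed as an endomorphism of $\R V(G_n)$; the argument for the Laplacian $L_n$ is word-for-word the same. Because $A_n$ is real symmetric it is diagonalizable, so $\R V(G_n)$ is the direct sum of the eigenspaces $E_\mu$ of $A_n$, indexed by the distinct eigenvalues $\mu$. By Lemma \ref{eigenrep} each $E_\mu$ is a nonzero $\Sn_n$-subrepresentation, so its Jordan--H\"older length is at least $1$; since lengths are additive over direct sums, the number of distinct eigenvalues of $A_n$ is at most the Jordan--H\"older length of $\R V(G_n)$, which is $M$. (Equivalently one may invoke Schur's lemma: as the irreducible $\R\Sn_n$-modules are absolutely irreducible, $A_n$ acts on the $\lambda$-isotypic summand $m_\lambda V(\lambda)_n$ through an $m_\lambda\times m_\lambda$ matrix, which has at most $m_\lambda$ distinct eigenvalues, and summing over $\lambda$ again gives $M$.)

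This proves the bound for all $n \geq N_0$ with $c_A = c_L = M$. For the finitely many $n < N_0$, the matrices $A_{G_n}$ and $L_{G_n}$ are honest finite matrices, each with some finite number of distinct eigenvalues; enlarging $c_A$ and $c_L$ to exceed these finitely many extra values yields constants valid for all $n$. There is no genuinely hard step here: the content is entirely in the two ingredients already in place — equivariance of $A_n$ and $L_n$ (Lemma \ref{eigenrep}) forces eigenspaces to be subrepresentations, and finite generation of $\R V(G_\dt)$ (via Theorem \ref{repstab}) caps how many nonzero subrepresentations can occur in a direct-sum decomposition. The only points requiring attention are bookkeeping ones: Theorem \ref{repstab} supplies the stable decomposition only for $n \gg 0$, so the small cases must be swept into the constants, and one must count composition factors \emph{with multiplicity} rather than merely counting isomorphism types of constituents, since distinct eigenvalues may have isomorphic eigenspaces.
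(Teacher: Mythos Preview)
Your proof is correct and follows essentially the same approach as the paper: both invoke Lemma \ref{eigenrep} to see that eigenspaces are $\Sn_n$-subrepresentations, and then use the stable decomposition in part (3) of Theorem \ref{repstab} to bound the total number of irreducible constituents of $\R V(G_n)$, hence the number of nonzero summands in the eigenspace decomposition. The paper compresses this into a single sentence, while you have spelled out the Jordan--H\"older/length argument and the handling of small $n$; these are exactly the details one would supply to expand the paper's proof.
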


\begin{proof}
This follows from Lemma \ref{eigenrep} as well as the third part of Theorem \ref{repstab}.\\
\end{proof}

\begin{remark}
Proposition \ref{boundedeval} can be used in certain cases to prove that certain families of graphs \emph{cannot} be endowed with the structure of a vertex-stable $\FI$-graph. For example, the cycle graphs $C_n$ and the wheel graphs $W_n$ have $n$ distinct eigenvalues.\\
\end{remark}

In fact, Proposition \ref{boundedeval} is the first piece of evidence describing a much more robust structure. The following theorem follows as a consequence of upcoming work of David Speyer and the first author.\\

\begin{theorem}
Let $G_\dt$ denote a vertex-stable $\FI$-graphs. Then there exist constants $c_A,c_L$ such that for all $n \gg 0$, $A_{G_n}$ (resp. $L_{G_n}$) has $c_A$ (resp. $c_L$) distinct eigenvalues. For $i= 1,\ldots c_A$ (resp. $i = 1,\ldots, c_L$) and $n \gg 0$, let $\lambda^A_i(n)$ (resp. $\lambda^L_i(n)$) denote the $i$-th largest eigenvalue of $A_{G_n}$ (resp. $L_{G_n}$). Then the for all $i$ and all $n \gg 0$ the functions
\[
n \mapsto \text{ the multiplicity of }\lambda_i^A(n), \hspace{1cm} n \mapsto \text{ the multiplicity of }\lambda_i^L(n)
\]
each agree with a polynomial.\\
\end{theorem}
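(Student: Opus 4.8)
The plan is to read off all the eigenvalue data of $A_{G_n}$ from its power sums $p_k(n):=\operatorname{tr}(A_{G_n}^k)$. For each fixed $k$, $p_k(n)$ is the number of closed walks of length $k$ in $G_n$, so by Theorem \ref{walks} it agrees with a polynomial in $n$ for $n\gg 0$. (The representation-theoretic picture of Lemma \ref{eigenrep}, which underlies Proposition \ref{boundedeval}, explains \emph{a priori} why the number of eigenvalues stays bounded; the quantitative statement, however, is cleanest to obtain from these walk counts.) Fix $n$ large and write the distinct eigenvalues of $A_{G_n}$ as $\mu_1(n)>\cdots>\mu_{c(n)}(n)$ with multiplicities $m_1(n),\dots,m_{c(n)}(n)$, so $p_k(n)=\sum_i m_i(n)\mu_i(n)^k$ and, since $A_{G_n}$ is real symmetric, $c(n)$ equals the degree of its minimal polynomial $\pi_n(t)=\prod_i(t-\mu_i(n))$. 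The sequence $(p_k(n))_{k\ge 0}$ satisfies the linear recurrence with characteristic polynomial $\pi_n$, and this recurrence has minimal order exactly $c(n)$ because $\sum_k p_k(n)t^k=\sum_i m_i(n)/(1-\mu_i(n)t)$ is in lowest terms (all $m_i(n)>0$ and the $\mu_i(n)$ are distinct). Hence $c(n)$ is the rank of the Hankel matrix $\big(p_{a+b}(n)\big)_{a,b}$; as each minor of such a Hankel matrix is a polynomial in $n$, this rank is eventually constant, say $=c$. This reproves the first assertion of the theorem (and strengthens Proposition \ref{boundedeval} to ``eventually constant'').

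Next I would make $\pi_n$ and the multiplicities explicit. With $c$ the eventual Hankel rank, Cramer's rule applied to the eventually invertible Hankel system $\big(p_{a+b}(n)\big)_{0\le a,b\le c-1}\cdot(a_c,\dots,a_1)^{\mathsf T}=(p_c,\dots,p_{2c-1})^{\mathsf T}$ shows that the coefficients $a_j(n)$ of $\pi_n(t)=t^c-a_1(n)t^{c-1}-\cdots-a_c(n)$ are rational functions of $n$; the same is true of the numerator $\nu_n(t)$ when $\sum_k p_k(n)t^{-k-1}$ is written as $\nu_n(t)/\pi_n(t)$. A partial-fraction expansion then gives $m_i(n)=\nu_n(\mu_i(n))/\pi_n'(\mu_i(n))$, which is a well-defined algebraic function of $n$ over $\Q(n)$: the root $\mu_i(n)$ is simple, and the ``$i$-th largest'' branch is cut out algebraically because any two of the $\mu_i(n)$ agree at only finitely many integers, so their ordering stabilizes for $n\gg 0$.

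The main obstacle is the final, number-theoretic step: upgrading ``$m_i(n)$ is an algebraic function of $n$'' to ``$m_i(n)$ agrees with a polynomial.'' Here one uses that $m_i(n)$ is a non-negative integer for every large integer $n$. By Hilbert's irreducibility theorem applied to the minimal polynomial of $m_i(n)$ over $\Q(n)$, an algebraic function over $\Q(n)$ taking rational values at a positive-density set of integers must in fact lie in $\Q(n)$; and a rational function of $n$ that takes integer values at all sufficiently large integers and grows at most polynomially (here bounded by $|V(G_n)|$, which is a polynomial by Corollary \ref{degreegrow}) must agree with a polynomial for $n\gg 0$. This yields part (2) for $A_{G_n}$. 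For the Laplacian $L_{G_n}=D_{G_n}-A_{G_n}$ the same machine applies once $\operatorname{tr}(L_{G_n}^k)$ is known to be eventually polynomial in $n$: expanding $(D_{G_n}-A_{G_n})^k$ and using that $D_{G_n}$ is diagonal with entries constant on $\Sn_n$-orbits and polynomial in $n$ (Corollary \ref{degreegrow}), every resulting term is a polynomially weighted count of a ``decorated closed walk'' of fixed length, and such counts are polynomial in $n$ by the finite-generation argument behind Theorem \ref{walks}. I expect the routine care to go into (i) checking that the Hankel rank really stabilizes to the exact number of distinct eigenvalues and (ii) the Laplacian trace bookkeeping; the genuinely delicate input is the Hilbert-irreducibility reduction above.
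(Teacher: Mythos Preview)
The paper does not actually prove this theorem: it defers to forthcoming work of the first author and Speyer, and the only hints given are Lemma~\ref{eigenrep}, Proposition~\ref{boundedeval}, and the remark in the examples that ``these matrices can be made into morphisms of $\FI$-modules by restricting to their actions on the isotypic pieces of $\R V(G_n)$.'' So the intended argument is representation-theoretic: one shows that the eigenspace decomposition is compatible with the $\FI$-structure, whence multiplicities are dimensions of finitely generated $\FI$-modules and thus polynomial by Theorem~\ref{repstab}.

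Your route is genuinely different. You extract the minimal polynomial $\pi_n(t)$ and the multiplicities from the power sums $p_k(n)=\operatorname{tr}(A_{G_n}^k)$ (polynomial in $n$ by Theorem~\ref{walks}) via Hankel determinants and Cramer's rule, and then use Hilbert irreducibility to force the integer-valued algebraic functions $m_i(n)$ down into $\Q(n)$, hence into $\Q[n]$. This is essentially correct and is an interesting alternative: it trades the $\FI$-module bookkeeping on isotypic components for a clean moment problem plus an arithmetic input. The paper's hinted approach would give more structural information (the eigenspaces themselves as sub-$\FI$-modules), while yours is agnostic to the $\Sn_n$-decomposition and works directly with scalar traces.

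Two places deserve more care. First, your sentence ``the $i$-th largest branch is cut out algebraically because any two of the $\mu_i(n)$ agree at only finitely many integers'' is not quite an argument: the roots of $\pi$ in $\overline{\Q(n)}$ carry no canonical ordering, and knowing they are distinct does not by itself let you match the size-ordered real eigenvalues at each integer $n$ to fixed algebraic branches. The clean fix is to observe that the discriminant of $\pi(n,t)$ is a nonzero rational function of $n$, so for all large \emph{real} $n$ the $c$ roots are simple; a Sturm-sequence (or sign-of-discriminant) argument then shows they are all real on some ray $(N,\infty)$, so $\lambda_1(n)>\cdots>\lambda_c(n)$ are honest real-analytic branches there, each algebraic over $\Q(n)$ (equivalently, work in the real closed field of real Puiseux series in $1/n$). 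With that in place your Hilbert step goes through verbatim. Second, for the Laplacian you need $\operatorname{tr}(L_{G_n}^k)$ polynomial in $n$; your sketch is right, but make explicit that one stratifies closed walks by the sequence of vertex \emph{orbits} visited (finitely many types, each counted by a finitely generated sub-$\FI$-module of $W_k^c(G_\dt)$), and that the degree attached to each orbit is a fixed polynomial in $n$ by Corollary~\ref{degreegrow}. Then each monomial in the expansion of $(D-A)^k$ contributes a finite sum of products of polynomials.
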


\begin{example}
We illustrate the above theorem with some examples. Let $G_\dt = K_\dt$ denote the $\FI$-graph of complete graphs. Then for $n \geq 1$, the $\Sn_n$-representation $\R V(G_n)$ is isomorphic to the usual permutation representation on $\R^n$. This decomposes into a pair of irreducible representations
\[
\R^n \cong \R \oplus S_n,
\]
where $\R$ is the trivial representation, and $S_n$ is the standard irreducible $(n-1)$-dimensional representation of $\Sn_n$. We note that the decomposition $\R^n \cong \R \oplus S_n$ agrees with the eigenspace decomposition of $\R V(G_n)$ with respect to both the adjacency matrix and the Laplacian matrix. The trivial representation is the eigenspace for $n-1$ (resp. 0), while $S_n$ is the eigenspace for -1 (resp. $-n$). It is easy to see that the collection $S_\dt$ actually forms a submodule of the $\FI$-module $\R V(G_n)$, and is therefore finitely generated. This implies that $\dim_\R S_n$ agrees with a polynomial for $n \gg 0$, which implies the same about the eigenvalue multiplicities in question.\\

Next, fix $m \geq 1$ and let $G_\dt =$ Star$_\dt = K_{\dt,1}$. For simplicity we only work with the eigenspaces for the adjacency matrix, although the Laplacian is not much different. For $n \geq 1$, the distinct eigenvalues of $G_n$ are $\pm \sqrt{n}$ and 0. We may decompose the representation $\R V(G_n)$ as
\[
\R V(G_n) = \R \oplus \R \oplus S_n
\]
where $S_n$ is as in the previous example, and $\R$ is once again the trivial representation. As before, this decomposition of $\R V(G_n)$ as a representation corresponds exactly to its decomposition in terms of eigenspaces.\\

Of course, one should not expect these eigenspaces to be irreducible as $\Sn_n$-representations in general. For example, if we instead consider $G_\dt = K_{\dt,m}$, where $m > 1$, then the eigenspaces of the adjacency matrix are not all irreducible as $\Sn_n$-representations. Despite this, one finds that the eigenspaces of 0 can be collected into a submodule of $\R V(G_\dt)$. Indeed, the proof of the previous theorem involves proving that these matrices can be made into morphisms of $\FI$-modules by restricting to their actions on the isotypic pieces of $\R V(G_n)$.\\
\end{example}

\section{Generalizations and alterations}
In this final section, we briefly discuss how the work of the previous sections can be generalized and altered to prove facts about different families of graphs and other simplicial complexes. We begin by considering graphs over categories other than $\FI$, and then move on to higher dimensional analogues to the previous work. Note that these sections are intended to be more motivation for further study, and should by no means be considered exhaustive.\\

\subsection{Other categories}

The representation theory of categories has seen a recent explosion in the literature, largely motivated by its connections with representation stability. In this section we consider representations of the categories $\VI(q)$, where $q$ is a power of a prime, and $\FI^m$, where $m$ is a positive integer. These categories can be seen discussed in \cite{Gad1,GW,LY,PS,SS}.\\

\begin{definition}\label{othercat}
Let $m$ be a fixed positive integer, and let $q$ be a power of a fixed prime $p$. The category $\VI(q)$ is that whose objects are free vector spaces over the finite field $\F_q$, and whose morphisms are injective linear maps. The category $\FI^m$ is defined to be the categorical product of $\FI$ with itself $m$ times. That is, it is the category whose objects are $m$-tuples of non-negative integers $(n_1,\ldots,n_m)$, and whose morphisms are $m$-tuples of injective maps $(f_1,\ldots,f_m):[n_1] \times \ldots \times [n_m] \hookrightarrow [n'_1,\ldots,n'_m]$.\\
\end{definition}

One may think of $\VI(q)$ as an analog of $\FI$, where the relevant acting groups are the finite general linear groups $GL(n,q)$. Similarly, $\FI^m$ is the analog of $\FI$ where the relevant acting groups are $\Sn_{n_1} \times \ldots \times \Sn_{n_m}$. Just as with $\FI$, a module over either of these categories will be defined to be a morphism from the category to $\R$ vector spaces. Definitions such as finite generation carry over in the obvious way\\

The following facts can be found in \cite{Gad1,GW,LY,PS,SS}.\\

\begin{theorem}
Let $\C$ denote either the category $\FI^m$ or $\VI(q)$. Then:
\begin{enumerate}
\item \cite{Gad1,LY} If $\C = \FI^m$, and $V$ is a finitely generated $\C$-module, then there exists a polynomial $p_V(x_1,\ldots,x_m) \in \Q[x_1,\ldots,x_m]$ such that for all $(n_1,\ldots,n_m)$ with $\sum_i n_i \gg 0$,
\[
\dim_\R(V_{n_1,\ldots,n_m}) = p_V(n_1,\ldots,n_m)
\]
\item \cite{SS,Gad1} If $V,W$ are finitely generated $\C$-modules, then the same is true of $V \otimes W$. \label{genten}
\item \cite{GW} If $V$ is a finitely generated $\VI(q)$-module, then there exists a polynomial $p_V(x) \in \Q[x]$ such that for all $n \gg 0$
\[
p_V(q^n) = \dim_\R V(\F_q^n).
\]
\item If $V$ is a finitely generated $\C$-module, then the transition maps of $V$ are all eventually injective.
\item \cite{PS,SS} If $V$ is a finitely generated $\C$-module, then all submodules of $V$ are also finitely generated.\\
\end{enumerate}
\end{theorem}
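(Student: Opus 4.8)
Parts (1), (2), (3) and (5) are precisely the statements recorded in the cited references, so the only content requiring an argument is part (4); I sketch a proof of it that runs uniformly for $\C = \FI^m$ and $\C = \VI(q)$ and uses nothing beyond the Noetherian property (5). The plan is to package every possible failure of injectivity into a single submodule of $V$, one for each ``coordinate direction'' of $\C$, and then let Noetherianity force that submodule to vanish in high degree.

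Set up the elementary morphisms first. For $\VI(q)$ there is one direction, with elementary morphisms the inclusions $\iota\colon \F_q^n \hookrightarrow \F_q^{n+1}$ adjoining a basis vector; for $\FI^m$ there are $m$ directions, and for each $i$ the elementary morphism $\iota^i$ out of an object $n=(n_1,\dots,n_m)$ increments the $i$-th coordinate, $\iota^i\colon n\to n+e_i$. Two standard facts about these categories will be used, and both are routine in either case: (a) the automorphism group of $n+e_i$ acts transitively (by postcomposition) on the set of morphisms $n\to n+e_i$ — for $\VI(q)$ because $GL(n+1,q)$ carries any $n$-plane of $\F_q^{n+1}$ to any other, and for $\FI^m$ because $\Sn_{n_i+1}$ carries any injection $[n_i]\hookrightarrow[n_i+1]$ to any other; and (b) every morphism of $\C$ factors as a finite composite of elementary morphisms. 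Now, for each direction $i$, define
\[
T^{(i)}_n := \ker\bigl(\iota^i_\as\colon V_n \longrightarrow V_{n+e_i}\bigr).
\]
By (a), $\iota^i_\as$ has the same kernel as every other morphism $n\to n+e_i$, so $T^{(i)}_n$ does not depend on the chosen $\iota^i$. The key point is that $T^{(i)}_\dt$ is a sub-$\C$-module of $V$: for any morphism $h\colon n\to p$ one produces a morphism $h'\colon n+e_i\to p+e_i$ with $\iota^i\circ h = h'\circ\iota^i$ (take $h'$ to agree with $h$ away from the $i$-th direction and to send the new element of $n+e_i$ to any element of $p+e_i$ outside the image of $\iota^i\circ h$), whence $\iota^i_\as(h_\as v) = h'_\as(\iota^i_\as v)=0$ whenever $v\in T^{(i)}_n$.

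With the submodule in hand, I would invoke part (5): $T^{(i)}_\dt$, being a submodule of the finitely generated module $V$, is finitely generated, hence generated by elements living in objects whose $i$-th coordinate is at most some $N_i$. If $v$ is such a generator, sitting in an object $k$ with $k_i\le N_i$, and $g\colon k\to n$ is a morphism with $n_i>k_i$, then by (b) $g$ factors through an elementary morphism in direction $i$ out of $k$, so $g_\as v=0$. Since $T^{(i)}_n$ is spanned by the images of these generators, it follows that $T^{(i)}_n=0$ as soon as the $i$-th coordinate of $n$ exceeds $N:=\max_i N_i$. Consequently, for every object $n$ all of whose coordinates exceed $N$ (for $\VI(q)$ this is simply $n>N$), every elementary morphism out of $n$ induces an injection, and---using (b) once more, noting that all intermediate objects again have every coordinate $>N$---so does every morphism out of $n$. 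This is exactly the claim that the transition maps of $V$ are eventually injective.

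The argument is the familiar Noetherian reduction used throughout this paper, so the only real care needed is bookkeeping: in the multigraded setting of $\FI^m$ one must recognize that the correct meaning of ``eventually'' is ``all coordinates large'' rather than ``total degree large,'' since $T^{(i)}_\dt$ need not be supported in bounded total degree, only in bounded $i$-th degree; and one must verify facts (a) and (b) in each of $\FI^m$ and $\VI(q)$. Neither obstruction is serious, and everything else is formal.
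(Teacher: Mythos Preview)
The paper does not prove this theorem: it is presented as a list of known results drawn from the cited references, introduced by ``The following facts can be found in [Gad1, GW, LY, PS, SS],'' and no argument is supplied for any of the five parts. Your handling of parts (1), (2), (3), and (5) as straight citations therefore matches the paper exactly.

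For part (4), which carries no individual citation in the paper, you go further than the paper does and supply the standard torsion-submodule argument. It is correct: your transitivity observation (a) guarantees that the kernel $T^{(i)}_n$ is independent of the chosen elementary map; your commuting-square construction shows $T^{(i)}_\dt$ is a genuine $\C$-submodule in both $\FI^m$ and $\VI(q)$; and Noetherianity (part (5)) then bounds its support in the $i$-th coordinate, after which factorization through elementary morphisms gives eventual injectivity. Your closing caveat---that for $\FI^m$ the correct reading of ``eventually'' is ``each coordinate large'' rather than ``total degree large''---is both accurate and necessary, and is the only real subtlety in transporting the familiar $\FI$ argument to the product category. So your proposal is sound, and in fact supplies more detail than the paper itself provides.
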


As one can see, these two categories have very similar properties to $\FI$-modules. Indeed, it is sufficient for us to recover virtually everything that was proven in previous sections.\\

\begin{definition}
Let $\C$ denote either the category $\FI^m$ or $\VI(q)$. Then a \textbf{$\C$-graph} is a functor $G_\dt:\C \rightarrow \mathbf{Graph}$. We say that $G_\dt$ is \textbf{vertex-stable} if the associated $\C$-module $\R V(G_\dt)$ is finitely generated.\\
\end{definition}

Borrowing notation and proofs from the previous sections, we conclude the following.\\

\begin{theorem} \label{bigthmgen}
Let $\C$ denote either $\VI(q)$ or $\FI^m$, and let $G_\dt$ be a vertex-stable $\C$-graph. Then:
\begin{enumerate}
\item the $\C$-module $\R E(G_\dt)$ is finitely generated;
\item for any $r \geq 1$, the $\C$-module $\binom{V(G_\dt)}{r}$ is finitely generated;
\item if $\C = \FI^m$, and $H$ is any fixed graph, then there exist polynomials $p_H(x_1,\ldots,x_m),p_H^{ind}(x_1,\ldots,x_m) \in \R[x]$ such that for all $\mathbf{n} := (n_1,\ldots,n_m)$ with $\sum_i n_i \gg 0$
\[
p_H(\mathbf{n}) = \eta_H(G_{\mathbf{n}}), \text{ and } p_H^{ind}(\mathbf{n}) = \eta_H^{ind}(G_{\mathbf{n}})
\]
\item if $\C = \VI(q)$, and $H$ is any fixed graph, then there exist polynomials $p_H(x),p_H^{ind}(x) \in \Q[x]$ such that for all $n \gg 0$
\[
p_H(q^n) = \eta_H(G_{\F_q^n}), \text{ and } p_H^{ind}(q^n) = \eta_H^{ind}(G_{\F_q^n})
\]
\item if $\C = \FI^m$, and $r \geq 1$ is fixed, then there exist polynomials $p_r(x),p_r^{c}(x) \in \Q[x_1,\ldots,x_m]$ such that for all $\mathbf{n} := (n_1,\ldots,n_m)$ with $\sum_i n_i \gg 0$
\[
p_r(\mathbf{n}) = |\{\text{number of walks in $G_{\mathbf{n}}$ of length $r$}\}|, \text{ and } p_r^{c}(\mathbf{n}) =|\{\text{number of closed walks in $G_{\mathbf{n}}$ of length $r$}\}|.
\]
\item if $\C = \VI(q)$, and $r\geq 1$ is fixed, then there exist polynomials $p_r(x),p_r^c(x) \in \Q[x]$ such that for all $n \gg 0$
\[
p_r(q^n) = |\{\text{number of walks in $G_n$ of length $r$}\}|, \text{ and } p_r^{c}(q^n) = |\{\text{number of closed walks in $G_n$ of length $r$}\}|
\]
\item for any fixed $i$, and any fixed graph $T$ the $\C$-module over $\Z$, $H_i(\Hom(T,G_\dt))$, is finitely generated;
\item if $G_\dt$ is torsion-free then for any fixed $m,i$ the $\C$-module over $\Z$, $H_i(\Conf_m(G_n))$, is finitely generated.\\
\end{enumerate}
\end{theorem}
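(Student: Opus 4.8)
The plan is to observe that none of the arguments in Sections 3 and 4 used anything about $\FI$ beyond a short list of formal properties of its module category, every one of which is supplied for $\C = \FI^m$ and $\C = \VI(q)$ by the theorem recalled just above: finite generation of $\C$-modules is preserved under finite direct sums and tensor products (hence under symmetric powers, which over $\R$ are quotients of tensor powers); $\C$-modules are Noetherian, so submodules of finitely generated modules are finitely generated; transition maps of a finitely generated module are eventually injective; and the dimension function of a finitely generated $\C$-module is eventually a polynomial in $n_1,\dots,n_m$ when $\C = \FI^m$, respectively a polynomial in $q^n$ when $\C = \VI(q)$. Granting these, each claim of Theorem \ref{bigthmgen} is a line-by-line transcription of an earlier proof with ``finitely generated'' in place of ``$d$-small'' and with the appropriate dimension statement inserted at the end; below I indicate the template for each.

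For (2), I would copy the proof of Theorem \ref{fgprops}: the assignment sending a tuple $x_1\otimes\cdots\otimes x_r$ to $\{x_1,\dots,x_r\}$ (and to $0$ if the $x_i$ are not distinct) defines a surjection $\Sym^r(\R V(G_\dt)) \twoheadrightarrow \R\binom{V(G_\dt)}{r}$, and $\Sym^r(\R V(G_\dt))$ is a quotient of $(\R V(G_\dt))^{\otimes r}$, which is finitely generated since $\R V(G_\dt)$ is; hence $\R\binom{V(G_\dt)}{r}$ is finitely generated. Part (1) then follows from the inclusion $\R E(G_\dt)\hookrightarrow \R\binom{V(G_\dt)}{2}$ together with Noetherianity. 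As in the $\FI$ case, where a transition map fails to be injective one simply declares the relevant module to be $0$ in that degree, which is legitimate because transition maps of a finitely generated $\C$-module are eventually injective.

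Parts (3)--(6) follow the proofs of Theorems \ref{enumthm} and \ref{walks}. The $\C$-module $V^H_\dt$ of copies of $H$ in $G_\dt$ embeds into $(\R V(G_\dt))^{\otimes |V(H)|}$, and the $\C$-module $W_r(G_\dt)$ of walks of length $r$ embeds into $(\R V(G_\dt))^{\otimes r}$, with $W_r^c(G_\dt)$ a submodule of $W_r(G_\dt)$ and the induced count $\eta_H^{ind}$ handled identically; Noetherianity makes all of these finitely generated. Applying the polynomiality statement of the preceding theorem for $\FI^m$ gives the polynomials $p_H(\mathbf n),p_H^{ind}(\mathbf n),p_r(\mathbf n),p_r^c(\mathbf n)$ of (3) and (5), while the $\VI(q)$ version gives the polynomials in $q^n$ of (4) and (6).

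For (7), I would reuse Lemma \ref{homcomplexfg} and Theorem \ref{homcomrepstab}: the simplicial chain module $\C^T_{\dt,i}$ of $\fiHom(T,G_\dt)$ is a submodule of $\bigl(\bigoplus_{j=1}^{i+1}\R\binom{V(G_\dt)}{j}\bigr)^{\otimes V(T)}$, hence finitely generated by (2), finite direct sums, the tensor-product property, and Noetherianity; the simplicial differentials commute with the $\C$-action, so $H_i(\fiHom(T,G_\dt))$ is a subquotient of a finitely generated $\C$-module and therefore finitely generated. For (8), I would reuse Proposition \ref{subdivfi} and Theorem \ref{strongconf}: since $G_\dt$ is torsion-free, its $m$-th subdivision can be carried out compatibly with the $\C$-action, giving a $\C$-graph $G^{(m)}_\dt$ whose $\R V$ and $\R E$ are still finitely generated; the cubical-chain module $\K_{\dt,m,i}$ of $\DConf_m(G^{(m)}_\dt)$ then embeds into a finite direct sum of tensor products of copies of $\R V(G^{(m)}_\dt)$ and $\R E(G^{(m)}_\dt)$, hence is finitely generated, and since the cubical differentials commute with the $\C$-action and $\Conf_m(G_n)\simeq \DConf_m(G^{(m)}_n)$ by Theorem \ref{cellularmodel}, the groups $H_i(\Conf_m(G_n))$ assemble into a finitely generated $\C$-module. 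The one place where the translation is not purely formal, and hence the main obstacle, is constructing the subdivided $\C$-graph $G^{(m)}_\dt$ (and checking that the differentials in (7) and (8) are genuinely maps of $\C$-modules): for $\FI^m$ this is immediate since $\FI^m$-graphs are built coordinatewise from $\FI$-graphs, but for $\VI(q)$ one must verify that inserting degree-$2$ vertices inside each edge is functorial for injective $\F_q$-linear maps, which works precisely because torsion-freeness makes each transition map an embedding of graphs, so the inserted vertices and edges can be indexed consistently along images. Everything else is routine bookkeeping.
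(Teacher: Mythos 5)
Your proposal is correct and follows the paper's approach exactly: the paper itself offers no separate argument for Theorem \ref{bigthmgen}, stating only that it follows by ``borrowing notation and proofs from the previous sections'' once one has Noetherianity, closure of finite generation under tensor products, eventual injectivity, and the polynomial dimension statements for finitely generated $\FI^m$- and $\VI(q)$-modules. Your line-by-line transcription of Theorems \ref{fgprops}, \ref{enumthm}, \ref{walks}, \ref{homcomrepstab}, and \ref{strongconf}, including the remark on making the subdivision of Proposition \ref{subdivfi} functorial in the $\VI(q)$ case via torsion-freeness, is precisely the intended argument.
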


To conclude this section, we consider various natural examples of $\FI^m$ and $\VI(q)$ graphs. The reader should keep in mind Theorem \ref{bigthmgen} while reading what follows.\\

\begin{example}
Recall that for fixed $m$, we considered the vertex-stable $\FI$-graph $K_{\dt,m}$. While this yielded various results, it is perhaps more correct to allow $m$ to vary, and consider the vertex-stable $\FI^2$-graph $K_{\dt_1,\dt_2}$. More generally, we can consider the complete $r$-partite graph $K_{\dt_1,\ldots,\dt_r}$ as a vertex-stable $\FI^r$-graph.\\

If $G,H$ are any graphs, then there are multiple ways one can define the product of $G$ and $H$. One such method is with the tensor (or categorical) product $G \times H$. The graph $G \times H$ is that whose vertex set is given by $V(G \times H) = V(G) \times V(H)$ and for which $\{(x_1,y_1),(x_2,y_2)\} \in E(G \times H)$ if and only if $\{x_1,x_2\},\{y_1,y_2\} \in E(G) \cup E(H)$. If $G_\dt$ and $H_\dt$ are two vertex-stable $\FI$-graphs, then we may define the $\FI^2$-graph $G_\dt \times H_\dt$ by the assignments
\[
(G_\dt \times H_\dt)_{n_1,n_2} = G_{n_1} \times H_{n_2}.
\]
It is clear that this family is vertex-stable as an $\FI^2$-graph. Note that a similar statement will hold for many of the other common graph products such as strong products and carteasean products (see any standard reference on algebraic graph theory for definitions of these products such as \cite{B}).\\

Turning our attention to $\VI(q)$, one is immediately reminded of the Grassmann graphs $J_q(n,k)$. The vertices of $J_q(n,k)$ are $k$-dimensional subspaces of $\F_q^n$, and two vertices form an edge if and only if the intersection of the corresponding subspaces is non-empty. Note that one may think of $J_q(n,k)$ is a ``$q$-version'' of the Johnson graph $J(n,k)$. In fact, many of the $\FI$-graphs we previously studied will have associated $\VI(q)$-graphs. For instance, we may define $KG_q(n,k)$ to be the graph whose vertices are subspaces of $\F_q^n$ of dimension $k$, and for which two vertices are connected if and only if their corresponding subspaces are disjoint.\\
\end{example}

\subsection{$\FI$-simplicial-complexes}

In this section, we generalize the work of the previous sections to higher dimensional simplicial complexes.\\
 
\begin{definition}
Let $X$ be a (compact) simplicial complex. We will write $V_i(X)$ for the set of $i$-simplices of $X$. A \textbf{simplicial map} between simplicial complexes $X,Y$ is a continuous morphism $f:X\rightarrow Y$ such that $f(V_i(X)) \subseteq V_i(Y)$.\\

An \textbf{$\FI$-simplicial-complex} is a (covariant) functor from $\FI$ to the category of simplicial complexes and simplicial maps. Given an $\FI$-simplicial complex $X_\dt$, we write $\R V_0(X_\dt)$ for the $\FI$-module whose evaluation at $[n]$ is the vector space with basis indexed by $\R V_0(X_n)$. We similarly define the $\FI$-modules $\R V_i(X_\dt)$, and $\binom{V_i(X_\dt)}{r}$ for all $i,r \geq 0$.\\

We say that an $\FI$-simplicial complex $X_\dt$ is \textbf{vertex stable with stable degree $\leq d$} if the $\FI$-module $\R V_0(X_\dt)$ is finitely generated in degree $\leq d$.\\
\end{definition}

The following theorem is proven in the exact same way as Theorem \ref{fgprops}.\\

\begin{theorem} \label{highdimfgprops}
Let $X_\dt$ be a vertex-stable $\FI$-simplicial-complex with stable degree $\leq d$. Then:
\begin{enumerate}
\item For all $i$ the $\FI$-modules $\R V_i(X_\dt)$ are $(d(i+1))$-small;
\item For all $i,r$ the $\FI$-modules $\binom{V_i(X_\dt)}{r}$ are $(rd(i+1))$-small;
\end{enumerate}
\end{theorem}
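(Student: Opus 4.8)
The plan is to transcribe the proof of Theorem~\ref{fgprops} almost verbatim, with $i$-simplices in place of vertices and edges, using the fact that a simplicial map $f\colon X\to Y$ satisfies $f(V_i(X))\subseteq V_i(Y)$ and is therefore injective on the vertex set of every $i$-simplex.

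For part (1), I would first record that an $i$-simplex $\sigma$ of $X_n$ is uniquely recovered from the $(i+1)$-element subset $\{v_0,\dots,v_i\}\subseteq V_0(X_n)$ that it spans. The assignment $\sigma\mapsto\{v_0,\dots,v_i\}$ then defines an injection of $\FI$-modules
\[
\R V_i(X_\dt)\hookrightarrow \binom{V_0(X_\dt)}{i+1}.
\]
This is a morphism of $\FI$-modules because, for any transition map $f$, the image $f(\sigma)$ is again an $i$-simplex, so $f$ is injective on $\{v_0,\dots,v_i\}$ and the induced map is never forced to be zero on the target. Next, exactly as in the proof of Theorem~\ref{fgprops}, there is a surjection $\Sym^{i+1}(\R V_0(X_\dt))\twoheadrightarrow\binom{V_0(X_\dt)}{i+1}$, and $\Sym^{i+1}(\R V_0(X_\dt))$ is a quotient of $(\R V_0(X_\dt))^{\otimes(i+1)}$; since $\R V_0(X_\dt)$ is finitely generated in degree $\leq d$ by hypothesis, Proposition~\ref{tensorgen} shows $(\R V_0(X_\dt))^{\otimes(i+1)}$ is finitely generated in degree $\leq d(i+1)$. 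Hence $\R V_i(X_\dt)$ is a submodule of a quotient of a module finitely generated in degree $\leq d(i+1)$, i.e.\ it is $(d(i+1))$-small.

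For part (2), I would iterate once more. A collection of $r$ distinct $i$-simplices is the image of an ordered $r$-tuple of $i$-simplices, so there is a surjection $(\R V_i(X_\dt))^{\otimes r}\twoheadrightarrow\binom{V_i(X_\dt)}{r}$ sending $\sigma_1\otimes\cdots\otimes\sigma_r$ to $\{\sigma_1,\dots,\sigma_r\}$ when the $\sigma_j$ are distinct and to $0$ otherwise, exactly as in the proof of Theorem~\ref{fgprops}. By part (1), $\R V_i(X_\dt)$ is a subquotient of some $\FI$-module $M$ finitely generated in degree $\leq d(i+1)$; since tensor products are exact over $\R$, $(\R V_i(X_\dt))^{\otimes r}$ is a subquotient of $M^{\otimes r}$, and the latter is finitely generated in degree $\leq rd(i+1)$ by Proposition~\ref{tensorgen}. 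A quotient of a subquotient is a subquotient, so $\binom{V_i(X_\dt)}{r}$ is $(rd(i+1))$-small.

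I do not anticipate a genuine obstacle; the argument is a direct analogue of the $\FI$-graph case. The only points requiring care are the bookkeeping conventions around non-injective transition maps (declaring the induced map to be zero on a collection that fails to remain a collection of the right size) and the observation that simplicial maps preserve simplex dimension, which is precisely what makes the embedding in part (1) a morphism of $\FI$-modules. Tracking the two nested applications of Proposition~\ref{tensorgen} then yields the stated bounds $d(i+1)$ and $rd(i+1)$.
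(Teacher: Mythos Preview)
Your proposal is correct and is precisely the argument the paper intends: the paper states that this theorem ``is proven in the exact same way as Theorem~\ref{fgprops},'' and your embedding $\R V_i(X_\dt)\hookrightarrow\binom{V_0(X_\dt)}{i+1}$ together with the surjection from the symmetric (hence tensor) power, followed by the second tensor-power step for part~(2), is exactly that transcription. The only detail worth flagging is that the embedding in part~(1) is a morphism of $\FI$-modules precisely because the paper's definition of simplicial map requires $f(V_i(X))\subseteq V_i(Y)$, which you correctly note.
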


From the perspective of representation stability, the above reveals something a bit striking about $\FI$-simplicial-complexes. Note that the following was also true about graphs, though in that case it is less interesting.\\

\begin{corollary}\label{repstabcom}
Let $X_\dt$ be a vertex-stable $\FI$-simplicial-complex with stable degree $\leq d$. Then for all $i \geq 0$ the $\FI$-module over $\Z$
\[
H_i(X_\dt)
\]
is $di$-small.\\
\end{corollary}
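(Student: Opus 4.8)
The plan is to mimic, verbatim, the proof of Theorem \ref{homcomrepstab}, applied to the simplicial chain complex of $X_\dt$. Write $\C_{\dt,i}$ for the $\FI$-module over $\Z$ whose value on $[n]$ is the free $\Z$-module on the $i$-simplices of $X_n$ (the $\Z$-analogue of $\R V_i(X_\dt)$); since every morphism of $\FI$ acts on $X_\dt$ by a simplicial map it carries $i$-simplices to $i$-simplices, so the transition maps make $\C_{\dt,i}$ an $\FI$-module, and by Theorem \ref{highdimfgprops} it is $(d(i+1))$-small. First I would check that the simplicial boundary operators $\partial\colon\C_{n,i}\to\C_{n,i-1}$ are natural in $n$, so that one obtains a complex of $\FI$-modules over $\Z$
\[
\cdots\longrightarrow\C_{\dt,i+1}\stackrel{\partial}{\longrightarrow}\C_{\dt,i}\stackrel{\partial}{\longrightarrow}\C_{\dt,i-1}\longrightarrow\cdots
\]
whose homology at spot $i$ is precisely the $\FI$-module $H_i(X_\dt)$. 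Since $\FI$-modules over $\Z$ form an abelian category and subquotients of finitely generated ones are finitely generated (the Noetherian property over $\Z$ used already in Lemma \ref{homcomplexfg}), $H_i(X_\dt)$ is a subquotient of $\C_{\dt,i}$ and is in particular finitely generated; the smallness bound one reads off directly this way is only $(d(i+1))$-small.

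The content is in sharpening this to $di$-small, and my plan for that is to exhibit $H_i(X_\dt)$ as a subquotient of $\C_{\dt,i-1}$ instead of $\C_{\dt,i}$; since Theorem \ref{highdimfgprops} gives that $\C_{\dt,i-1}$ is $(d\cdot i)$-small, the corollary follows. Concretely, writing $Z_j=\ker(\partial\colon\C_{\dt,j}\to\C_{\dt,j-1})$ and $B_j=\im(\partial\colon\C_{\dt,j+1}\to\C_{\dt,j})$, one has the short exact sequence of $\FI$-modules $0\to Z_i\to\C_{\dt,i}\xrightarrow{\ \partial\ }B_{i-1}\to 0$ together with the inclusions $B_{i-1}\hookrightarrow Z_{i-1}\hookrightarrow\C_{\dt,i-1}$, and $H_i(X_\dt)=Z_i/B_i$. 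I would reduce to $\Q$-coefficients (transferring back to $\Z$ via the universal coefficient theorem and finite generation), use the identification $H^i\cong H_i$ to replace the chain-level description $Z_i/B_i$ by the cochain-level one in which the relevant coboundary originates one degree lower, and use eventual injectivity of the transition maps of $\C_{\dt,i-1}$ (Theorem \ref{repstab}) together with the eventual-induced behaviour (the $\FI$-simplicial-complex analogue of Theorem \ref{fgprops}, proved as in Theorem \ref{inducedtheorem}) to promote the cochain groups to covariant $\FI$-modules and thereby realise $H_i(X_\dt)$, for $n\gg 0$, as a subquotient of $\C_{\dt,i-1}$.

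I expect this last step, improving the degree from the immediate $d(i+1)$ to $di$, to be the main obstacle: an $i$-cycle in $\C_{\dt,i}$ genuinely involves $i+1$ vertices of $X_n$ per simplex, so the gain must come either from the cochain/coboundary reformulation indicated above or from a direct argument that the extension $0\to Z_i\to\C_{\dt,i}\to B_{i-1}\to 0$ forces the part of $Z_i$ lying above degree $di$ to be hit by $B_i$ and hence to die in homology. Everything else — existence of the complex of $\FI$-modules, the chain-map property of $\partial$, finite generation of the homology, and the passage to smallness — is routine given Theorem \ref{highdimfgprops} and the Noetherian property already invoked in the proof of Theorem \ref{homcomrepstab}.
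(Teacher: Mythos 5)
Your first paragraph already is the paper's proof: the intended argument is exactly the one-liner that the simplicial chain groups form a complex of $\FI$-modules over $\Z$ whose degree-$i$ term is $(d(i+1))$-small by the first part of Theorem \ref{highdimfgprops}, so by the Noetherian property $H_i(X_\dt)$, being a subquotient of that term, is finitely generated and small. Your observation that this only yields the bound $d(i+1)$ rather than $di$ is correct, and the discrepancy is in the paper, not in your reading: the constant $di$ in the statement is best read as an off-by-one slip for $d(i+1)$. Indeed, the remark following the corollary says it could have been used to shorten Theorem \ref{homcomrepstab}, whose bound $(i+1)d\cdot|V(T)|$ matches the $d(i+1)$ reading; and the literal bound fails already at $i=0$: for the vertex-stable $\FI$-simplicial-complex with $X_n$ a discrete set of $n$ points (stable degree $1$), $H_0(X_n)\cong\Z^n$ is the permutation module, which is $1$-small but not $0$-small.

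The second half of your proposal, the attempt to genuinely sharpen to $di$-small, is where the gap lies, and the route you sketch would not close it. Passing to cochains does not move the relevant group down a degree: $H^i$ is $\ker(\delta\colon C^i\to C^{i+1})/\im(\delta\colon C^{i-1}\to C^i)$, hence still a subquotient of the degree-$i$ cochain group, which has the same size as the degree-$i$ chain group; only the image $\im(\delta^{i-1})$ lives one degree lower, and $H^i$ is not a subquotient of it. Moreover cochains are contravariant in $X_n$, so they naturally form a co-$\FI$-module, and the step where you ``promote'' them to covariant $\FI$-modules via eventual injectivity and inducedness is not justified; over $\Z$ one also has $H^i\neq H_i$ in general, so the universal-coefficient detour does not recover the integral statement. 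The short exact sequence $0\to Z_i\to \C_{\dt,i}\to B_{i-1}\to 0$ likewise gives no control of $H_i=Z_i/B_i$ by $\C_{\dt,i-1}$. The correct resolution is simply to prove the corollary with the bound $d(i+1)$ --- which is all the paper's own proof delivers --- rather than to chase the stated constant; with that emendation your first paragraph is a complete proof and coincides with the paper's.
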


\begin{proof}
Follows from the first part of Theorem \ref{highdimfgprops}, as well as the usual complex for computing simplicial homology and the Noetherian property of $\FI$-modules.\\
\end{proof}

Note that we could have used the above corollary to shorten the proof of Theorem \ref{homcomrepstab}.\\

Another interesting corollary of Theorem \ref{highdimfgprops} relates to counting colorings of a simplicial complex $T$. Just as in the graph case, the colorings we will consider are intimately linked with simplicial maps into a certain $\FI$-simplicial-complex. To begin, we therefore note the following.\\

\begin{theorem} \label{homfg}
Let $X_\dt$ denote a vertex-stable $\FI$-graph. Then for any simplicial complex $T$, the $\FI$-module,
\[
\R \Hom(T,X_\dt),
\]
whose evaluation on $[n]$ is the real vector space with basis indexed by $\Hom(T,X_n)$, is finitely generated. In particular, for $n \gg 0$, the function
\[
n \mapsto |\Hom(T,X_n)|
\]
agrees with a polynomial.\\
\end{theorem}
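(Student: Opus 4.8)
The plan is to run the same Noetherian template used for Lemma \ref{homcomplexfg} and Corollary \ref{chrompoly}: exhibit $\R\Hom(T,X_\dt)$ as a submodule of a tensor power of $\R V_0(X_\dt)$, deduce finite generation of that tensor power from earlier structural results, and then invoke the Noetherian property of $\FI$-modules. The only genuinely new wrinkle is that $T$ is now an arbitrary (compact) simplicial complex rather than a graph, but since $T$ is compact it has a finite vertex set $V_0(T)$, and a simplicial map out of $T$ is still determined by what it does to vertices.

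First I would check that $\R\Hom(T,X_\dt)$ really is an $\FI$-module: the transition maps of $X_\dt$ are simplicial maps $X_n\to X_m$, and post-composing a simplicial map $T\to X_n$ with a simplicial map $X_n\to X_m$ yields a simplicial map $T\to X_m$, so the transition maps of $X_\dt$ act on the sets $\Hom(T,X_n)$ naturally in $n$. Note that, in contrast to Theorem \ref{enumthm} or Theorem \ref{walks}, no truncation in small degrees is required here, because simplicial maps in the sense used in this paper are grading-preserving and hence are never degenerated by non-injective transition maps. Next, fixing an ordering of $V_0(T)$, the assignment that sends a simplicial map $T\to X_n$ to the tuple of its values on $V_0(T)$ gives an injection of sets $\Hom(T,X_n)\hookrightarrow V_0(X_n)^{V_0(T)}$ that is natural in $n$, and therefore an injection of $\FI$-modules
\[
\R\Hom(T,X_\dt)\hookrightarrow (\R V_0(X_\dt))^{\otimes |V_0(T)|}.
\]
By the first part of Theorem \ref{highdimfgprops} (applied with $i=0$), $\R V_0(X_\dt)$ is $d$-small, where $d$ is the stable degree of $X_\dt$; hence by Proposition \ref{tensorgen}, together with the definition of $d$-smallness, the tensor power on the right is $(d\cdot|V_0(T)|)$-small, in particular finitely generated. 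The Noetherian property then forces $\R\Hom(T,X_\dt)$, being a submodule, to be finitely generated, indeed $(d\cdot|V_0(T)|)$-small. Finally the polynomiality statement in Theorem \ref{repstab} produces a polynomial $p(x)\in\Q[x]$ of degree $\le d\cdot|V_0(T)|$ with $p(n)=\dim_\R\R\Hom(T,X_n)=|\Hom(T,X_n)|$ for all $n\gg 0$.

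The one step that is not purely formal is the construction of the embedding: one must verify that a simplicial map $T\to X_n$ is pinned down by its restriction to $V_0(T)$, and that this vertex-restriction commutes with post-composition by the transition maps of $X_\dt$, so that the map displayed above is a well-defined injective morphism of $\FI$-modules. Both facts follow from the precise definition of simplicial map adopted here (a continuous map respecting the simplex grading), but they are where the argument actually touches the geometry of $T$; once they are in hand, everything else is an instance of machinery already developed, exactly as in the proofs of Lemma \ref{homcomplexfg} and Corollary \ref{chrompoly}.
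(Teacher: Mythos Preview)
Your proposal is correct and follows essentially the same route as the paper: embed $\R\Hom(T,X_\dt)$ into $(\R V_0(X_\dt))^{\otimes |V_0(T)|}$, invoke Proposition \ref{tensorgen} to see that the tensor power is finitely generated, and conclude by the Noetherian property. The paper's own proof is a two-line version of exactly this argument; your additional remarks (checking functoriality, noting that no truncation is needed because the transition maps are grading-preserving simplicial maps, and extracting the degree bound $d\cdot|V_0(T)|$) are all sound elaborations rather than departures.
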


\begin{proof}
The $\FI$-module $\R \Hom(T,X_\dt)$ can be realized as a submodule of $(\R V_0(X_\dt))^{\otimes|V_0(T)|}$. This module is finitely generated by Proposition \ref{tensorgen}, so the Noetherian property implies our result.\\
\end{proof}

\begin{definition}
Let $(r,s)$ be a pair of positive integers. An \textbf{$(r,s)$-coloring} of a simplicial complex $T$ is a map of sets $f:V_0(T) \rightarrow [r]$ such that if $\{v_1,\ldots,v_i\} \in V_i(T)$, then at most $s$ of the vertices $v_1,\ldots,v_i$ share the same color.\\

If $(r,s)$ is as above, we define the \textbf{$(r,s)$ complete simplicial complex}, $K^s_r$, to be the simplicial complex whose vertices are given by,
\[
V_0(K_r^s) := [r] \times [s]
\]
and whose $i$-simplices are all $i$-element subsets of $V_0(K_r^s)$. If $f:[r] \hookrightarrow [r']$ is an injection, then we obtain a simplicial map $f_\as:K_r^s \rightarrow K_{r'}^s$. This turns the collection of $(r,s)$ complete simplicial complexes, with $s$ fixed, into a vertex-stable $\FI$-simplicial-complex
\[
K_\dt^s
\]
\end{definition}

Colorings of simplicial complexes have recently seen interest in the literature, and seem to have deep connections with Stanley-Reisner theory \cite{BFHT,DMN}. While one would like to claim that $(r,s)$-colorings of a simplicial complex $T$ are in bijection with simplicial maps into $K_r^s$, this is not quite the case. However, it is close enough that there are still conclusions that we may draw. \\

\begin{corollary}
Let $T$ be a simplicial complex, and let $s \geq 1$ be an integer. If we write $V^{T,s}_\dt$ to denote the $\FI$-module whose evaluation on $[r]$ is the real vector space with basis indexed by $(r,s)$-colorings of $T$, then $V^{T,s}_\dt$ is a quotient of the $\FI$-module
\[
\R \Hom(T,K_{\dt}^s),
\]
as defined in Theorem \ref{homfg}. In particular, for $r \gg 0$, the function
\[
r \mapsto |\{\text{$(r,s)$-colorings of T}\}|
\]
agrees with a polynomial.\\
\end{corollary}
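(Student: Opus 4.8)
The plan is to exhibit an explicit surjection of $\FI$-modules $\R\Hom(T,K_\dt^s)\twoheadrightarrow V^{T,s}_\dt$ and then transfer finite generation from the source. Recall $V_0(K_r^s)=[r]\times[s]$, and let $\mathrm{pr}\colon[r]\times[s]\to[r]$ be projection onto the first coordinate. Since every $i$-element subset of $V_0(K_r^s)$ is a face, a simplicial map $g\colon T\to K_r^s$ is exactly a vertex map $V_0(T)\to[r]\times[s]$ that is injective on each face of $T$. First I would check that $\mathrm{pr}\circ g$ is then an $(r,s)$-colouring: for a face $\{v_1,\dots,v_i\}\in V_i(T)$ the images $g(v_1),\dots,g(v_i)$ are distinct, and since each fibre of $\mathrm{pr}$ has exactly $s$ elements, at most $s$ of them can agree in the first coordinate, i.e.\ at most $s$ of the $v_j$ receive the same colour. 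Thus $g\mapsto\mathrm{pr}\circ g$ is a well-defined map of sets $\Hom(T,K_r^s)\to\{(r,s)\text{-colourings of }T\}$, which I would extend $\R$-linearly to $\Psi_r\colon\R\Hom(T,K_r^s)\to V^{T,s}_r$.

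Second I would verify naturality, so that the $\Psi_r$ assemble into a morphism of $\FI$-modules. An injection $\iota\colon[r]\hookrightarrow[r']$ acts on $K_\dt^s$ through the first coordinate and on colourings by postcomposition, and both operations commute with $\mathrm{pr}$; moreover an injective relabelling of colours does not merge colour classes, so it sends $(r,s)$-colourings to $(r',s)$-colourings. Hence the squares defining a natural transformation commute, and in particular this confirms that $V^{T,s}_\dt$ is a genuine $\FI$-module rather than merely a graded vector space.

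The heart of the argument --- and the step I expect to be the main obstacle --- is surjectivity of each $\Psi_r$: given an $(r,s)$-colouring $f$, one must lift it to a simplicial map $g\colon T\to K_r^s$ with $\mathrm{pr}\circ g=f$. Concretely, for each colour $c$ one must place the vertices of $f^{-1}(c)$ into the $s$ slots $\{c\}\times[s]$ so that the placement is injective on $f^{-1}(c)\cap F$ for every face $F$ of $T$; the defining inequality of an $(r,s)$-colouring says each such intersection has at most $s$ vertices, which is exactly the number of available slots, and the delicate point is to make all of these placements consistently at once. This lifting is where the colouring condition is genuinely used; the rest of the argument is formal.

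Granting surjectivity, the conclusion follows from the machinery already developed. As in the proof of Theorem~\ref{homfg}, $\R\Hom(T,K_\dt^s)$ embeds into $(\R V_0(K_\dt^s))^{\otimes|V_0(T)|}$, and $\R V_0(K_\dt^s)$ is a direct sum of $s$ copies of the standard $\FI$-module (the $\FI$-action touches only the $[r]$-coordinate), hence generated in degree $\le 1$; by Proposition~\ref{tensorgen} the tensor power is generated in degree $\le|V_0(T)|$, so $\R\Hom(T,K_\dt^s)$ is $|V_0(T)|$-small by the Noetherian property. A quotient of a $|V_0(T)|$-small $\FI$-module is again $|V_0(T)|$-small, so $V^{T,s}_\dt$ is $|V_0(T)|$-small, and the last part of Theorem~\ref{repstab} produces a polynomial of degree $\le|V_0(T)|$ agreeing with $r\mapsto\dim_\R V^{T,s}_r=|\{(r,s)\text{-colourings of }T\}|$ for all $r\gg0$. (One could also bypass $\Hom(T,K_\dt^s)$ and realize $V^{T,s}_\dt$ directly as a submodule of $(\R V_0(K_\dt^1))^{\otimes|V_0(T)|}$, since a colouring is simply a function $V_0(T)\to[r]$ and injective relabellings preserve the colouring condition; this likewise yields finite generation.)
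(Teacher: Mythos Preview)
Your map $\Psi_r$ is precisely the one the paper writes down, and your verifications of well-definedness and naturality fill in details the paper leaves implicit. You are right to isolate surjectivity of $\Psi_r$ as the one substantive step, and right to hesitate: the paper simply asserts it, and in fact it is false. Take $T=C_5$, the $5$-cycle viewed as a $1$-dimensional complex, and $s=2$. Since each edge has only two vertices, \emph{every} map $V_0(T)\to[r]$ is an $(r,2)$-colouring; but the constant colouring $f\equiv c$ admits no lift, because a lift would be a map $V_0(T)\to\{c\}\times[2]$ injective on each edge, i.e.\ a proper $2$-colouring of $C_5$, and no such colouring exists. In particular $\R\Hom(C_5,K_1^2)=0$ while $V^{C_5,2}_1\cong\R$, so the quotient claim in the corollary already fails at the level of dimensions. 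The obstruction is exactly the one you anticipated: the condition ``at most $s$ vertices of each colour in every face'' bounds the clique number of each monochromatic piece by $s$, but says nothing about its chromatic number.

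Your parenthetical alternative is therefore not merely a shortcut but the correct route to the polynomiality conclusion. Sending an $(r,s)$-colouring to itself realises $V^{T,s}_\dt$ as a \emph{sub}-$\FI$-module (rather than a quotient) of the module freely spanned by all functions $V_0(T)\to[\dt]$, namely $(\R V_0(K_\dt^1))^{\otimes|V_0(T)|}$, which is generated in degree $\le|V_0(T)|$; Noetherianity and Theorem~\ref{repstab} then give the desired polynomial of degree $\le|V_0(T)|$.
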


\begin{proof}
Given an element $f \in \Hom(T,K^s_r)$, we may associate an $(r,s)$-coloring of $T$ by assigning $v \in V_0(T)$ the first coordinate of $f(v)$. This defines a surjective map of $\FI$-modules
\[
\R \Hom(T,K_{\dt}^s) \rightarrow V^{T,s}_\dt
\]
as desired.\\
\end{proof}

\end{document}